\newtheorem{thm}{Theorem}[section]
\newtheorem{lem}{Lemma}[section]
\newtheorem{df}{Definition}[section]
\newtheorem{prop}{Proposition}[section]
\newtheorem{rem}{Remark}[section]
\newcommand{\R}{\mathbb{R}}
\newcommand{\N}{\mathbb{N}}
\newcommand{\DD}{\mathcal{D}}
\newcommand{\LL}{\mathcal{L}}
\newcommand{\FF}{\mathcal{F}}
\newcommand{\e}{\varepsilon}
\begin{document}
\bibliographystyle{amsplain}
\title[Non-local KdV-Burgers undercompressive waves]{Existence of undercompressive travelling waves of a non-local generalised Korteweg-de Vries-Burgers equation
}

\author{F. Achleitner}
\address{Vienna University of Technology, 
Institute for Analysis and Scientific Computing, Wiedner Hauptstrasse 8-10, 1040 Wien, Austria.}
\email{franz.achleitner@tuwien.ac.at}

\author{C.~M. Cuesta}
\address{University of the Basque Country (UPV/EHU), Faculty of Science and Technology, Department of Mathematics, Aptdo. 644, 48080 Bilbao, Spain.}
\email[corresponding author]{carlotamaria.cuesta@ehu.eus}

\author{X. Diez-Izagirre}
\address{University of the Basque Country (UPV/EHU), Faculty of Economics and Business, Department of Applied Economics, Oñati plaza 1, 20018 Donostia-San Sebastian, Spain.}
\email{xuban.diez@ehu.eus}

\thanks{F. Achleitner was supported by the Austrian Science Fund (FWF) via the FWF-funded SFB \# F65. C. M. Cuesta and X. Diez-Izagirre thank the support by the grant PID2021-126813NB-I00 funded by MICIU/AEI/10.13039/501100011033 and by ``ERDF A way of making Europe'' and the financial support of the Basque Government through the Research Group IT1615-22. X. Diez-Izagirre was also supported by the Basque Government through the pre-doctoral grant PRE-2018-2-0013.
}

\begin{abstract}
We study travelling wave solutions of a generalised Korteweg-de Vries-Burgers equation with a non-local diffusion term and a concave-convex flux. This model equation arises in the analysis of a shallow water flow by performing formal asymptotic expansions associated to the triple-deck regularisation (which is an extension of classical boundary layer theory). The resulting non-local operator is a fractional type derivative with order between $1$ and $2$. Travelling wave solutions are typically analysed in relation to shock formation in the full shallow water problem. We show rigorously the existence of travelling waves that, formally, in the limit of vanishing diffusion and dispersion would give rise to non-classical shocks, that is, shocks that violate the Lax entropy condition. The proof is based on arguments that are typical in dynamical systems. The nature of the non-local operator makes this possible, since the resulting travelling wave equation can be seen as a delayed integro-differential equation. Thus, linearisation around critical points, continuity with respect to parameters and a shooting argument, are the main steps that we have proved and adapted for solving this problem.
\end{abstract}

\maketitle

\emph{Keywords.} non-local evolution equation, fractional derivative, travelling waves, non-classical shocks

\emph{Math.Subj.Class.} 47J35, 26A33, 35C07

\tableofcontents

\section{Introduction}
In this paper we study existence of undercompressive travelling waves of the following one-dimensional evolution equation:
\begin{equation}\label{EE}
  \partial_t u + \partial_x u^3 = \partial_x \DD^{\alpha}[ u] +\tau \partial_x^3 u\,, \quad x\in \R\,, \ t\geq 0
\end{equation}
with $\tau >0$ and $\DD^\alpha[\cdot]$ is the non-local operator here applied to a real valued function $g$,
\begin{equation}\label{FD}
\DD^\alpha [g](x)=d_\alpha\int_{-\infty}^x \frac{g'(y)}{(x-y)^{\alpha}}\,dy \,,
\quad \mbox{with} \quad 
0<\alpha<1\,, \quad d_\alpha = \frac{1}{\Gamma(1-\alpha)}>0 \,.
\end{equation}
Observe that this operator only acts on the variable $x$. Here $\Gamma$ denotes the Gamma function.

Equation (\ref{EE}) with $\alpha=1/3$ and a flux function ($u^3$ in (\ref{EE})) which is either a quadratic or a cubic polynomial, has been derived from one (quadratic flux) and two (cubic flux) layer shallow water flows by performing formal asymptotic expansions in the framework of the triple-deck boundary 
layer theory (see, e.g. \cite{Kluwick2018}, \cite{Kluwick2010} and \cite{NV}). In \cite{NV} numerical simulations suggest the existence of travelling waves that violate the entropy condition in the two-layer model. Such solutions resemble the inner structure, in such a particular limit, of small amplitude shock waves for the original shallow water problem, that are not entropic or non-classical (see e.g. \cite{lefloch1}). In this manuscript we aim to study rigorously the existence of such travelling wave solutions for (\ref{EE}), which has a cubic flux and thus corresponds to the two-layer model when $\alpha=1/3$. 

We recall, that existence and stability of travelling waves for (\ref{EE}) with a quadratic flux function have been established in \cite{AHS, AHS2} ($\tau=0$) and in \cite{ACH} ($\tau>0$), see also \cite{CA}. The results of \cite{AHS} and \cite{ACH} form part of building blocks in the proof of our main theorem, as we shall describe below.

We recall that hyperbolic conservation laws exhibit discontinuous solutions, whose discontinuities or shocks may travel with constant speed. These solutions belong to the class of weak solutions. An extra condition is necessary to select a unique solution for the Cauchy problem. The most common way to derive uniqueness conditions is to use vanishing diffusion arguments (see e.g. \cite{Smoller}). In particular, for scalar conservation laws admissible shocks result from constructing travelling wave solutions of the regularised parabolic equation. There are, however, other types of natural regularisations of hyperbolic conservation laws. Moreover, different regularisations of hyperbolic conservation laws might lead to different weak solutions. Examples describing this situation are scalar conservation laws with a non-genuinely nonlinear flux (neither convex nor concave), such as a cubic flux. In this case, shocks violating the classical Lax entropy condition, see \cite{JMcS1995}, can be constructed by introducing a diffusive-dispersive regularisation. This kind of solutions is defined by Hayes and LeFloch \cite{lefloch1} as non-classical shock waves, others refer to them as undercompressive, see \cite{ElHoeferSheafer2017} and the references therein. Our aim is thus to show the existence of such solutions for the non-local version of these regularisations (\ref{EE}). For a complementary study of non-classical shock waves for a scalar conservation law with local diffusion and non-local dispersion see \cite{Rohde2005}.

We notice that the parameter $\tau$ results from a choice in the rescaling. Analogous to \cite{JMcS1995} we can consider the equation in the following form 
\[
\partial_t u + \partial_x u^3 = \e\partial_x \DD^{\alpha}[ u] +\delta \partial_x^3 u\,, \quad x\in \R\,, \ t\geq 0
\]
where $\e$ and $\delta$ are positive constants that act as control parameters.  This means that depending on their relative size either diffusion ($\delta\ll \e^{2/\alpha}$) or dispersion ($\delta\gg \e^{2/\alpha}$) dominates in the limit of both $\e$ and $\delta \to 0$. The parameter $\tau$ results from the scaling $(x,t)\to(\e^{1/\alpha} x, \e^{1/\alpha} t)$ so that $\tau= \delta/\e^{2/\alpha}$. It is when this parameter is of order one when we expect to get solutions that violate the entropy condition. 

We introduce the travelling wave variable $\xi=x-ct$ with wave speed $c$ and 
look for solutions $u(x,t)=\phi(\xi)$ of (\ref{EE}) which connect 
two different far-field real values $\phi_-$ and $\phi_+$. A straightforward 
calculation shows that if $\phi$ depends on $x$ and $t$ only through the 
travelling wave variable, then so does $\DD^{\alpha}[\phi]$, and the travelling wave problem becomes
\begin{equation}\label{tw:row}
  -c\phi' + (\phi^3)' = (\DD^{\alpha} [\phi])'+\tau\phi''' .
\end{equation}
Here $'$ denotes differentiation with respect to $\xi$. We can then integrate 
(\ref{tw:row}) with respect to $\xi$ to arrive at the following travelling wave equation:
\begin{equation}\label{TWP}
 \tau\phi''+ \DD^\alpha[ \phi]=h(\phi)\,,
\quad \mbox{where} \quad h(\phi) := -c(\phi-\phi_-)+\phi^3-\phi_-^3 \,,
\end{equation}
where we have used
\begin{equation}\label{far-fieldL}
\lim_{\xi\to-\infty}\phi(\xi) = \phi_-
\end{equation}
and 
\begin{equation}\label{far-fieldR}
 \lim_{\xi\to \infty}\phi(\xi) = \phi_+ \,.
\end{equation}
If $\phi'$ decays to zero fast enough as $\xi\to\pm \infty$, then we obtain the Rankine-Hugoniot condition
\begin{equation}\label{RHC}
   c= \frac{\phi_+^3 - \phi_-^3}{\phi_+ - \phi_-}=\phi_+^2 +\phi_-^2+\phi_-\phi_+\,,
\end{equation}
that we assume throughout this manuscript.

One expects that travelling wave solutions correspond to classical shock waves in the limit of the diffusive and dispersive terms tending to zero (in the right order -dispersion followed by diffusion tending to zero- or at the right asymptotic rate) if the Lax entropy condition \cite[Chapter~II.1]{LeFlochBook} is satisfied, which for the current non-linear flux reads:
\begin{equation}\label{entropy:cond}
3\phi_+^2<  c< 3\phi_-^2\,.
\end{equation}
In this manuscript, however, we investigate the existence of travelling wave solutions that do not satisfy (\ref{entropy:cond}). In particular, we shall look for solutions that satisfy
\begin{equation}\label{no:entropy:cond}
  c< 3\min\{\phi_-^2,\phi_+^2\}\,.
\end{equation}

We assume without loss of generality that $\phi_+ < \phi_-$ (observe that the equation is invariant under the change $\phi\to -\phi$). 

The roots of $h(\phi)$ are $\phi_+$, $\phi_-$ and $\phi_c:=-(\phi_-+\phi_+)$. We require that 
\begin{equation}\label{lin:ass}
\phi_+<\phi_c <\phi_-
\end{equation}
so that
\begin{equation}\label{sign:h:prime}
h'(\phi_-)>0\,,\quad h'(\phi_c)<0 \quad \mbox{and} \quad h'(\phi_+)>0\,,
\end{equation}
which is equivalent to the condition (\ref{no:entropy:cond}).
We observe that this in particular implies that $\phi_->0$ and $\phi_+<0$.

Under these assumptions, travelling wave solutions of (\ref{TWP}) with (\ref{far-fieldL}) and (\ref{far-fieldR}) correspond to {\it non-classical shocks}
in the sense described earlier. On the other hand, solutions that satisfy (\ref{TWP}) with (\ref{far-fieldL}) and
\begin{equation}\label{far-fieldC}
\lim_{\xi\to \infty}\phi(\xi) = \phi_c 
\end{equation}
correspond to classical shocks (with the same wave speed). We expect, as in the local case $\alpha=1$ (see e.g. \cite{JMcS1995}), the possibility of solutions satisfying (\ref{far-fieldR}) to be a distinguished limit in the sense that there is a unique value of $\tau$ that allows such connection, whereas there is an open set of values of $\tau$ that allows solutions to satisfy (\ref{far-fieldC}). We recall that this last possibility corresponds to the classical shock admissibility condition (\ref{entropy:cond}) if we replace $\phi_+$ by $\phi_c$ in the notation. Moreover, if the only zeros of $h$ where $\phi_-$ and $\phi_c$ with (\ref{sign:h:prime}), this would be the only possible travelling wave solutions that can be constructed in both the local and the non-local case (see \cite{ACH}).

There is a further necessary condition for existence of (\ref{TWP})-(\ref{far-fieldR}) on the values $\phi_-$ and $\phi_+$, namely that $\phi_+ + \phi_->0$, which implies that $c>0$ (see (\ref{RHC})), in particular. We shall show this below (see Lemma~\ref{nec:cond:farfield}), and this is a consequence of the results of \cite{ACH}. 

Let us state our main theorem:
\begin{thm}[Existence of undercompressive travelling waves]\label{TW:main:theorem}
  Let $\phi_-$ and $\phi_+\in\R$ such that (\ref{lin:ass}) with $\phi_c=-( \phi_+ + \phi_-)$ holds and such that
  \begin{equation}\label{nec:cond0}
  \phi_+ + \phi_->0.
  \end{equation}
  Then, there exists $\tau>0$ such that (\ref{TWP})-(\ref{far-fieldR}) has a unique solution (up to a shift in $\xi$) in $C_b^3(\R)$.
\end{thm}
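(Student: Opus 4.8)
The plan is to read (\ref{TWP}) as a dynamical system in which $\xi$ is the evolution variable and the state at $\xi$ is the history $\phi|_{(-\infty,\xi]}$ --- this is legitimate because $\DD^{\alpha}[\phi](\xi)=d_\alpha\int_0^\infty s^{-\alpha}\phi'(\xi-s)\,ds$ depends only on that history, so (\ref{TWP}) is a delayed integro-differential equation --- and then to run a shooting argument in $\tau$. As a preliminary one records the necessary condition (\ref{nec:cond0}) (this is Lemma~\ref{nec:cond:farfield}, essentially \cite{ACH}): multiplying (\ref{TWP}) by $\phi'$ and integrating over $\R$ kills the inertial term, the right-hand side gives $H(\phi_+)-H(\phi_-)$ with $H'=h$, and a Plancherel computation gives $\int_\R\DD^{\alpha}[\phi]\,\phi'\,d\xi=c_\alpha\int_\R|\omega|^{\alpha-1}|\widehat{\phi'}(\omega)|^2\,d\omega>0$ with $c_\alpha=\cos((1-\alpha)\pi/2)/(2\pi)>0$; hence $\int_{\phi_+}^{\phi_-}h<0$, and since one computes $\int_{\phi_+}^{\phi_-}h=\tfrac14(\phi_--\phi_+)^3\phi_c$ this says exactly $\phi_c<0$, i.e. (\ref{nec:cond0}). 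Writing $W:=-H$, the same identity reads $W(\phi_-)>W(\phi_+)$, and it is this strict inequality that makes the shooting work.

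The first core step is the local analysis at the zeros of $h$. Linearising (\ref{TWP}) at a zero $\phi_*$ and substituting $e^{\lambda\xi}$ gives the characteristic equation $\tau\lambda^2+\lambda^\alpha=h'(\phi_*)$ (principal branch, $\mathrm{Re}(\lambda)>0$). Separating real and imaginary parts one checks that for \emph{every} $\tau>0$: at $\phi_-$ and at $\phi_+$ (where $h'>0$) there is exactly one root with $\mathrm{Re}(\lambda)\ge 0$, a simple positive real one, so each of these equilibria has a one-dimensional unstable manifold; and at $\phi_c$ (where $h'<0$) there is no root with $\mathrm{Re}(\lambda)\ge 0$, so $\phi_c$ is an attracting equilibrium. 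One-dimensionality of $W^{u}(\phi_-)$ gives, up to a shift, a unique orbit $\Gamma_\tau$ leaving $\phi_-$ in the decreasing direction, with $\Gamma_\tau-\phi_-\sim a\,e^{\mu_-(\tau)\xi}$, $a<0$, as $\xi\to-\infty$, depending continuously on $\tau$. Near $\phi_+$, approached as $\xi\to+\infty$, the operator also feels the left tail of $\phi$ and produces an additional, algebraically decaying ($\sim\xi^{-\alpha}$) contribution; the conclusion one needs is that a bounded orbit converges to $\phi_+$ only along the one-dimensional stable set of $\phi_+$, so ``$\Gamma_\tau\to\phi_+$'' is a borderline (codimension-one) event. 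Most of this machinery --- the functional setting, invariant manifolds and far-field asymptotics, and the continuous dependence of $\Gamma_\tau$ on $\tau$ --- is adapted from the quadratic-flux analyses \cite{AHS, ACH}, whose local nature makes them applicable here.

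Now the shooting. For $\tau\downarrow 0$, (\ref{TWP}) degenerates to $\DD^{\alpha}[\phi]=h(\phi)$, for which \cite{AHS} (and \cite{ACH} for small $\tau>0$) give a monotone front $\phi_-\to\phi_c$; thus $\Gamma_\tau$ is a monotone front converging to $\phi_c$ for $\tau$ small. For $\tau\to\infty$ the rescaling $\xi=\sqrt\tau\,\eta$ turns (\ref{TWP}) into $\Phi_{\eta\eta}+\tau^{-\alpha/2}\DD^{\alpha}[\Phi]=h(\Phi)$, a vanishing perturbation of the conservative equation $\Phi_{\eta\eta}=h(\Phi)$; since $W(\phi_-)>W(\phi_+)$, the conservative orbit leaving the saddle $\phi_-$ downwards crosses $\phi_+$ with nonzero velocity in finite ``time'', and by continuity on compact $\eta$-intervals so does $\Gamma_\tau$ for $\tau$ large --- i.e. $\Gamma_\tau$ dips strictly below $\phi_+$. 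Let $S_A$ be the set of $\tau>0$ for which $\Gamma_\tau$ stays above $\phi_+$ and converges to $\phi_c$, and $S_B$ the set for which $\Gamma_\tau$ goes strictly below $\phi_+$. Both are nonempty by the above; $S_B$ is open by continuity in $\tau$; $S_A$ is open because $\phi_c$ is attracting. A trajectory leaving $\phi_-$ downwards cannot return to $\phi_-$ (while $\phi$ is decreasing one has $\DD^{\alpha}[\phi]\le 0$, hence $\tfrac{d}{d\xi}\big(\tfrac{\tau}{2}(\phi')^2+W(\phi)\big)\le 0$, strictly once $\phi$ has moved, forcing the energy below $W(\phi_-)$), there are no periodic orbits (the dissipation integral is strictly positive on non-constant periodics), and a bounded solution on a half-line converges to a zero of $h$; hence for $\tau\in(0,\infty)\setminus(S_A\cup S_B)$ necessarily $\Gamma_\tau\to\phi_+$. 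By connectedness of $(0,\infty)$ such a $\tau=\tau^{\ast}$ exists; this $\Gamma_{\tau^{\ast}}$ is the desired solution, it lies in $C_b^3(\R)$ after bootstrapping (\ref{TWP}), and it is unique up to a shift because any $C_b^3$ solution of (\ref{TWP})--(\ref{far-fieldR}) lies on the one-dimensional $W^{u}(\phi_-)$ and, since $\phi_+<\phi_-$, on its decreasing branch.

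The main obstacle is making this picture rigorous in the truly non-local (infinite-dimensional) setting: proving the invariant-manifold and hyperbolicity statements in the history-space formulation, pinning down the mixed exponential/algebraic asymptotics at $\phi_+$ so that convergence to $\phi_+$ is genuinely codimension one, and --- the most delicate point --- replacing the elementary planar phase-plane arguments available in the local case \cite{JMcS1995} by proofs valid here, since $\DD^{\alpha}$ destroys the \emph{global} monotonicity of the natural energy (only monotone stretches of $\phi$ carry a sign). Concretely, the continuity of $\Gamma_\tau$ in $\tau$, the compactness needed to pass to $\tau^{\ast}$, the convergence of bounded half-line solutions to equilibria, and the robustness of the escape below $\phi_+$ all require genuine estimates rather than inspection; these are where the real work lies.
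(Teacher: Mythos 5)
Your overall skeleton — treat (\ref{TWP}) as a history-dependent equation, shoot in $\tau$, identify small $\tau$ with convergence to $\phi_c$, large $\tau$ with escape below $\phi_+$, and capture $\phi_+$ as the borderline — is the same strategy as the paper (which works with the sets $\Sigma_u,\Sigma_c,\Sigma_+$), and your large-$\tau$ step and the necessary condition (\ref{nec:cond0}) are in the right spirit. But the two pillars your connectedness argument stands on are precisely where the paper's real work lies, and as stated they are gaps. First, non-emptiness of $S_A$: you claim that for small $\tau$ the results of \cite{AHS,ACH} give a monotone front $\phi_-\to\phi_c$, but those results are for a quadratic (genuinely nonlinear) flux, where $h$ has only two zeros and the energy identity (\ref{energy:form}) supplies a lower bound keeping the profile above the relevant zero. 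For the cubic flux that barrier is absent ($H(\phi)-H(\phi_-)>0$ on all of $(-\infty,\phi_-)$), global monotonicity for small $\tau$ is explicitly an open conjecture, and nothing you cite prevents the small-$\tau$ trajectory from overshooting $\phi_c$ and then converging to $\phi_+$ or blowing down. The paper has to manufacture this step: it replaces $h$ by a modified $\tilde h$ with only the zeros $\phi_-,\phi_c$, applies the \cite{ACH} theory to the modified problem, and then proves a new quantitative theorem (Theorem~\ref{mono:tau:small:tail}, resting on the small-$\tau$ asymptotics of the fundamental solution $v(\eta;\tau)$ of the linearisation at $\phi_c$, Appendix~\ref{appendix:v}) showing the modified profile stays within $O(\tau^{\alpha/(2-\alpha)})$ of $\phi_c$ in the tail, hence solves the original equation. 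Without an argument of this kind, ``$\Gamma_\tau\to\phi_c$ for small $\tau$'' is unproven.

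Second, openness of $S_A$: ``because $\phi_c$ is attracting'' is only the statement that the characteristic equation $\tau z^2+z^\alpha-h'(\phi_c)=0$ has no roots with nonnegative real part (Lemma~\ref{roots}). In the history formulation the linearisation at $\phi_c$ carries a memory term $d_\alpha\int_{-\infty}^{\xi_0}\phi'(y)(\xi-y)^{-\alpha}dy$ that decays only algebraically, so linearised spectral information does not by itself give a basin-of-attraction statement uniform enough to make $\{\tau:\Gamma_\tau\to\phi_c \text{ and stays above }\phi_+\}$ open (and the additional ``stays above $\phi_+$'' condition on an infinite interval is itself not obviously open). It is telling that the paper never proves $\Sigma_c$ open: it proves $\Sigma_u$ open and non-empty and $\Sigma_c$ non-empty, assumes $\Sigma_+=\emptyset$ (so $\Sigma_c$ is closed), and reaches a contradiction at a boundary point $\tau_0\in\Sigma_c\cap\partial\Sigma_c$ by combining continuous dependence, the variation-of-constants representation around $\phi_c$, the far-field behaviour $v\sim\eta^{-\alpha}$, $v'\sim\eta^{-(1+\alpha)}$, and the blow-down rate $|\phi_{\bar\tau}|\sim C\sqrt{\bar\tau}/(\xi^*-\xi)$ of Lemma~\ref{blow-down}. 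In other words, the ``attractivity'' you invoke is only established in a weaker quantitative form (a trajectory shadowing one that converges to $\phi_c$ cannot blow down immediately), and obtaining it requires the detailed estimates on $v(\eta;\tau)$ and on the blow-down profile. Relatedly, the trichotomy facts you use (bounded half-line solutions converge to a zero of $h$, unbounded ones tend to $-\infty$ at a finite $\xi^*$ with the above rate, no unbounded oscillation) are themselves nontrivial lemmas in the paper (Theorem~\ref{existence}(iii), Lemmas~\ref{noosci} and~\ref{blow-down}), and the blow-down rate is used essentially in closing the argument, so they cannot be treated as background facts.
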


The uniqueness of $\tau$ is an open question.

The paper is organised as follows. In Section~\ref{sec:pre:results}, we give some preliminary results. First, in Section~\ref{sec:pre:D:estimates}, we give some lemmas with estimates related to the non-local operator. In Section~\ref{sec:existence}, we establish the existence of solutions of (\ref{TWP}) that satisfy (\ref{far-fieldL}) and prove that there are only three possible behaviours that such trajectories will have as $\xi$ increases: they tend to $\phi_c$ or to $\phi_+$ as $\xi\to\infty$, or they tend to $-\infty$ as $\xi\to(\xi^*)^-$ for some finite value $\xi^*$. This allows us to approach the problem via a shooting argument, with shooting parameter $\tau$. With this in mind, in Section~\ref{sec:main} we define three disjoint sets of $\tau>0$: $\Sigma_c$ (the profiles tend to $\phi_c$, thus for such $\tau$'s the waves are {\it classical}), $\Sigma_+$ (the profiles tend to $\phi_+$ and the waves are {\it non-classical}) and $\Sigma_u$ (the profiles tend to $-\infty$, and in fact `blow up' at a finite value of $\xi$). Then we prove Theorem~\ref{TW:main:theorem} by first showing that $\Sigma_u$ is non-empty and open and that $\Sigma_c$ is non-empty. Then, we argue by contradiction assuming that $\Sigma_+$ is empty, which in particular implies that $\Sigma_c$ must be closed. We arrive to a contradiction by applying continuity with respect to $\tau$, and hence $\Sigma_+$ must be non-empty, which proves the behaviour of the desired solutions and their existence, and thus our main theorem.

  We note that the proof of $\Sigma_c$ being non-empty is very involved: the idea is to use that for the quadratic case (see \cite{ACH}) where only classical solutions exist, it was conjectured that for small enough values of $\tau$ the travelling waves are monotone decreasing. This is still a conjecture, but if this was true, we can look at the problem for a modification of our non-linearity $h$ such that it is $C^2$ smooth, but that it has only the two zeros $\phi_-$ and $\phi_c$ and coincides with $h$ in an interval of $\phi$ containing these values.  
  This means that for $\tau$ small enough such that the solutions are monotone, both problems coincide (recall (\ref{lin:ass})), and hence solutions of the original problem for such small values of $\tau$ must tend to $\phi_c$ as $\xi\to\infty$, and therefore $\Sigma_c$ is non-empty. We do not prove monotonicity for the modified problem, but it is enough to prove that for small values of $\tau$ the profile solutions remain in the interval where both non-linearities coincide, we show this in Section~\ref{sec:control}. Additionally, in this section, for the sake of completeness, we show that if the profiles of the modified problem are monotone decreasing (and, by similar arguments, for the problem with a quadratic non-linearity), they decay to $\phi_c$ like $\xi^{-\alpha}$ as $\xi\to\infty$, as for the case $\tau=0$ (see \cite{DC}).

In Section~\ref{sec:numerics}, we give numerical evidence to illustrate the role of $\tau$ in the behaviour of solutions of (\ref{TWP}) satisfying (\ref{far-fieldL}). Indeed, we adapt the value of $\tau$ recursively between what it appears to be the two generic behaviours $\phi\to\phi_c$ and $\phi\to -\infty$, to capture (as long as machine accuracy allows) the distinguished behaviour $\phi\to\phi_+$. We do this for two values of $\alpha$ for illustrative purposes. 

In Appendix~\ref{appendix:B}, we prove continuous dependence with respect to $\tau$ on finite intervals of existence. In fact, we only have to formulate our equation as a functional differential equation and apply the results of \cite{HVL}.

Finally, we point out that for the proof we need some results on the linearised equations around $\phi_-$ (some of them are already given in \cite{ACH}, see also \cite{CA}) and most importantly around $\phi_c$. In Appendix~\ref{roots} we give pertinent information about the roots of the characteristic equations associated to the linearised problems around $\phi_-$ and $\phi_c$.

In particular, we will use repeatedly the following formulation for solutions that decay to $\phi_c$, or that are close to them in some sense, 
\[
\tau \phi'' + \DD^\alpha_{\xi_0}[\phi] - h'(\phi_c) \phi = h(\phi) - h'(\phi_c) \phi - d_\alpha \int_{-\infty}^{\xi_{0}}\frac{\phi'(y)}{(\xi-y)^\alpha} \, dy\,.
\]
where we have split the non-local term by integrating separately on $(-\infty,\xi_0)$ and on $(\xi_0,\xi)$ for some value $\xi_0<\xi$. In particular, we use the notation,
\[
\DD^\alpha_{\xi_0}[\phi]:= d_\alpha \int_{\xi_{0}}^\xi\frac{\phi'(y)}{(\xi-y)^\alpha} \, dy\,,
\]
which is a Caputo derivative of order $\alpha\in(0,1)$ (see e.g. \cite{BK} and \cite{GM2}). 

In this way and thanks to (\ref{sign:h:prime}), or $h'(\phi_c)<0$, we can treat the right-hand side as given and apply a variation of constants type of formulation (see e.g. \cite{BK} and \cite{GM2}) to obtain the solution implicitly. The study of the corresponding linear equations (the inhomogeneous and the homogeneous one) is done in the Appendix~\ref{appendix:v}. In particular, we analyse the fundamental solutions of the homogeneous equation: we give its behaviour near the initial conditions and in the far-field, and study its behaviour in several ranges of its domain as $\tau\to 0^+$ (concluding monotonicity for small values of $\tau$, in particular). These estimates are crucial to show that $\Sigma_c$ is non-empty.


\section{Preliminary results}\label{sec:pre:results}		

\subsection{The non-local operator and some elementary lemmas}\label{sec:pre:D:estimates}
Let us first recall some basic properties of the fractional differential operator 
$\DD^{\alpha}[\cdot]$. Since it can be written as the convolution of $g'$ with 
$\theta(x)x^{-\alpha}/\Gamma(1-\alpha)$ (where $\theta$ is the Heaviside function), $\DD^{\alpha}[\cdot]$ 
is a pseudo-differential operator with symbol
\begin{equation}\label{symbol}
\frac{ik\sqrt{2\pi}}{\Gamma(1-\alpha)}  \FF\left( \frac{\theta(x)}{x^\alpha}\right)(k)
 = \left(b_\alpha+i a_\alpha \, \mbox{sgn}(k) \right)|k|^\alpha \,,
\end{equation}
i.e. 
$\FF(\DD^{\alpha}[u])(k)=\left(b_\alpha+i a_\alpha \, sgn (k) \right)|k|^\alpha \hat{u}(k)$ 
where $\FF$ denotes the Fourier transform
\[
  \FF(\varphi)(k) = \hat{\varphi}(k) = \frac{1}{\sqrt{2\pi}}\int_\R e^{-i k x}\varphi(x) \,dx \,,
\]			
and the coefficients $ a_\alpha$ and $b_\alpha$ are given by
\begin{equation*}
  a_\alpha = \sin\left(\frac{\alpha\pi}2\right)>0 \,,\qquad
  b_\alpha = \cos\left(\frac{\alpha\pi}2\right)>0 \,, \quad \mbox{for}\quad 0<\alpha<1, 
\end{equation*}
(we refer to \cite{AA} for the details of the computation to obtain (\ref{symbol})). 
The operator on the right-hand side of (\ref{EE}) is then a pseudo-differential operator with symbol
\begin{equation}\label{FT}
\FF(\partial_x\DD^\alpha) = -\left(a_\alpha-i b_\alpha \, sgn (k) \right)|k|^{\alpha+1} \,,
\end{equation}
which is dissipative in the sense that the real part of (\ref{FT}) is negative.

For $s\geq 0$ we shall adopt the following notation for the Sobolev space of square integrable functions,
\[
H^s := \{ u \in L^2(\R):\,\|u\|_{H^s} <\infty \} \,,\qquad \|u\|_{H^s} := \|(1+|k|^2)^{s/2}\hat{u}\|_{L^2(\R)} \,,
\]
and the corresponding homogeneous norm
\[
\|u\|_{\dot H^s} := \| |k|^s\hat{u} \|_{L^2(\R)} \,.
\]
Using that $(a_\alpha^2+b_\alpha^2)=1$ it is easy to see that 
$\|\DD^\alpha u\|_{\dot H^s} = \|u\|_{\dot H^{s+\alpha}}$, and this suggests that one can interpret 
$\DD^\alpha[\cdot]$ as a derivative of order $\alpha$. We also observe 
that $\DD^\alpha[\cdot]$ is a bounded linear operator from $H^s$ to $H^{s-\alpha}$, for all $s\geq 1$.

For $m\in \N_{\geq 0}$, let $C^m_b(\R)$ denote the set of functions, whose derivatives up to order $m$ are continuous and bounded. Then one can also infer that $\DD^{\alpha}[\cdot]$ is a bounded linear operator from $C_b^1(\R)$ to $C_b(\R)$. As explained in \cite{AHS}, this can be easily seen by splitting the domain of integration in (\ref{FD}) into $(-\infty,x-M]$ and $[x-M, x]$ for some positive $M>0$. Then integration by parts in the first integral shows the boundedness of $\DD^{\alpha}[\cdot]$. Moreover, we will need the following improved estimate: 


\begin{lem}\label{bound:Da} For $\alpha \in (0,1)$, let $x\in\R$ and $g\in C_b^1(-\infty,x)$, then for every $z\in\R$ with $z\leq x$
\[
|\DD^{\alpha}[g](z)| \leq C_\alpha \left(\sup_{y\in(-\infty,z]}|g(y)|\right)^{1-\alpha}\left(\sup_{y\in(-\infty,z]}|g'(y)|\right)^\alpha
\]
where 
\[
C_\alpha = d_\alpha\left( \frac{2(2\alpha)^{-\alpha}}{1-\alpha}\right)\,. 
\]
In particular, if $g \in C_b^1(\R)$, then $\DD^{\alpha}[g]\in C_b(\R)$ with
\[
\|\DD^{\alpha}[g]\|_\infty\leq  C_\alpha \, \| g'\|_\infty^{\alpha}\,\|g\|_\infty^{1-\alpha}\,.
\]
\end{lem}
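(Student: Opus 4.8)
The plan is to estimate the defining integral in \eqref{FD} directly, splitting the domain of integration at a point $z-M$ for a parameter $M>0$ to be optimized at the end. First I would write, for $z\le x$,
\[
|\DD^\alpha[g](z)| = d_\alpha\left| \int_{-\infty}^z \frac{g'(y)}{(z-y)^\alpha}\,dy\right| \le d_\alpha\left( \int_{-\infty}^{z-M} \frac{|g'(y)|}{(z-y)^\alpha}\,dy + \int_{z-M}^{z}\frac{|g'(y)|}{(z-y)^\alpha}\,dy\right).
\]
For the second (near-diagonal) piece I bound $|g'(y)|\le \sup_{(-\infty,z]}|g'|=:G_1$ and integrate the singular kernel: $\int_{z-M}^z (z-y)^{-\alpha}\,dy = M^{1-\alpha}/(1-\alpha)$, giving a contribution $\le d_\alpha G_1 M^{1-\alpha}/(1-\alpha)$. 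For the first (tail) piece the natural move is integration by parts in $y$, transferring the derivative off $g'$; but since here the kernel $(z-y)^{-\alpha}$ is being differentiated, it is cleaner to integrate by parts on $g'$ against $(z-y)^{-\alpha}$ so that we pick up $g$ and the derivative of the kernel. Integrating by parts,
\[
\int_{-\infty}^{z-M}\frac{g'(y)}{(z-y)^\alpha}\,dy = \left[\frac{g(y)}{(z-y)^\alpha}\right]_{-\infty}^{z-M} - \alpha\int_{-\infty}^{z-M}\frac{g(y)}{(z-y)^{\alpha+1}}\,dy,
\]
the boundary term at $-\infty$ vanishing because $g$ is bounded and $(z-y)^{-\alpha}\to 0$. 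Bounding $|g(y)|\le \sup_{(-\infty,z]}|g|=:G_0$ and evaluating, the boundary term at $z-M$ contributes $G_0 M^{-\alpha}$ and the remaining integral contributes $\alpha G_0 \int_{-\infty}^{z-M}(z-y)^{-\alpha-1}\,dy = \alpha G_0 \cdot \frac{M^{-\alpha}}{\alpha} = G_0 M^{-\alpha}$, for a total tail bound $\le 2 d_\alpha G_0 M^{-\alpha}$.

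Combining the two pieces gives
\[
|\DD^\alpha[g](z)| \le d_\alpha\left( 2 G_0 M^{-\alpha} + \frac{G_1 M^{1-\alpha}}{1-\alpha}\right),
\]
and the last step is to minimize the right-hand side over $M>0$. Differentiating in $M$ and setting the derivative to zero yields $M = 2\alpha G_0/G_1$ (up to the constant factor); substituting back and collecting the powers of $2$, $\alpha$, $G_0$, $G_1$ should produce exactly the constant $C_\alpha = d_\alpha\bigl(2(2\alpha)^{-\alpha}/(1-\alpha)\bigr)$ together with the stated homogeneity $G_0^{1-\alpha}G_1^{\alpha}$. (One should also check the degenerate cases: if $G_1=0$ then $g$ is constant and $\DD^\alpha[g]=0$, consistent with the bound; if $G_0=0$ then $g\equiv 0$.) Finally, the "in particular" clause is immediate: if $g\in C_b^1(\R)$ then $G_0\le \|g\|_\infty$ and $G_1\le\|g'\|_\infty$ uniformly in $z$, so $\DD^\alpha[g]$ is bounded with the claimed norm estimate, and continuity of $\DD^\alpha[g]$ follows as already noted in the text (splitting the integral and using dominated convergence on each piece).

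The computations here are entirely routine; the only place requiring a little care is the optimization over $M$ and the bookkeeping of constants so that the final bound matches $C_\alpha$ exactly, and the verification that all boundary terms in the integration by parts genuinely vanish under the hypothesis $g\in C_b^1(-\infty,x)$ (boundedness of $g$, not decay, is what is used, together with the decay of the kernel). I do not expect any real obstacle.
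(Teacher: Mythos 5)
Your proposal is correct and follows essentially the same route as the paper's proof: split the integral at distance $M$ from the singularity, bound the near piece by $\sup|g'|\,M^{1-\alpha}/(1-\alpha)$, integrate by parts on the tail to get $2\sup|g|\,M^{-\alpha}$, and optimize at $M=2\alpha\sup|g|/\sup|g'|$, which yields exactly the stated $C_\alpha$. No gaps.
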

\begin{proof}
The proof is similar to the estimate for Riesz-Feller operators, see e.g. \cite[Proposition~2.4]{AK2}. Let $z\leq x$, and let us denote, for simplicity, 
\[
A=\left(\sup_{y\in(-\infty,z]}|g(y)|\right) \quad\mbox{and}\quad A'=\left(\sup_{y\in(-\infty,z]}|g'(y)|\right)\,.
\]
Then
\begin{equation}\label{boundDa:1}
\begin{split}
| \DD^\alpha[g](z)| =& d_\alpha \left|  \int_0^\infty \frac{g'(z-s)}{s^\alpha} \,ds \right| \\ 
\leq& d_\alpha \left|\int_0^M  \frac{g'(z-s)}{s^\alpha} \,ds\right| + d_\alpha \left|\int_M^\infty \frac{g'(z-s)}{s^\alpha} \,ds \right|\,.
\end{split}
\end{equation}
We estimate the first integral by taking the supremum in $g'$ and computing the remaining integral, thus
\[
\left|\int_0^M  \frac{g'(z-s)}{s^\alpha}\,ds\right|\leq  A' \,
\int_0^M \frac{ds}{s^\alpha} =  \frac{A'}{1-\alpha}M^{1-\alpha}.
\]
In the second integral, we first integrate by parts and pull out the supremum of $g$ to deduce
\[
\left|\int_M^\infty  \frac{g'(z-s)}{s^\alpha} \,ds\right| \leq \alpha \left|\int_M^\infty  \frac{g(z-s)}{s^{\alpha+1}} \,ds\right| + A M^{-\alpha} \leq 2 A M^{-\alpha}.  
\]
Using these estimates in (\ref{boundDa:1}) gives
\begin{equation}\label{boundDa:2}
| \DD^\alpha[g](z)| \leq d_\alpha \left(   \frac{A'}{1-\alpha} M^{1-\alpha}+  2 A M^{-\alpha} \right)\,.
\end{equation}
An easy computation shows that the minimum of the right-hand side of (\ref{boundDa:2}) is attained at $M=2\alpha A/A'$ and this implies the first statement. The second is a consequence of the first by taking the supremum over all values in $\R$.
\end{proof}

In some instances we shall need to split the integral operator (\ref{FD}) as follows
\begin{equation}\label{FD:split}
\DD^\alpha [g](x)=d_\alpha\int_{-\infty}^{x_0}\frac{g'(y)}{(x-y)^{\alpha}} \,dy 
+ d_\alpha \int_{x_0}^x\frac{g'(y)}{(x-y)^{\alpha}}\,dy \,,
\quad \mbox{for some} \quad x_0<x\,, 
\end{equation}
and treat the first term as a known function, whereas the second one can be viewed as a 
left-sided Caputo derivative, see e.g. \cite{Kilbas}, which we denote by $\DD_{x_0}^\alpha[\cdot]$, indicating that the integration is from a finite value $x_0$, i.e. $g \in C^1_b([x_0,\infty))$ and $\alpha\in(0,1]$.

Notice that the first term on the right-hand side of (\ref{FD:split}), which is a function of $x$, is not equal to $\DD^\alpha [g](x_0)$, which is a number for fixed $x_0$.

We shall use the following technical lemma several times:
\begin{lem}\label{h:H:bounds}
  For all $\phi\leq -\phi_-$($<0$), and defining  $H(\phi)=\int_0^\phi h(y)dy$, we have
\[
 2 \phi^3 \leq  h(\phi) < C_h \phi^3 (<0)\quad \mbox{and}\quad (0<)H( \phi)-H(\phi_-)< C_H \phi^4
\]
where 
\[
0<C_h \leq -\frac{2(\phi_-+\phi_+)\phi_+}{(\phi_-)^2}(<1)\quad \mbox{and}\quad C_H\leq 2 \,. 
\]
\end{lem}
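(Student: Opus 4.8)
The plan is to establish the two-sided bound on $h$ first, then deduce the bound on $H-H(\phi_-)$ by integration, using the structure of $h$ as a cubic with known roots.

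\medskip

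\noindent\textbf{Step 1: the bounds on $h(\phi)$.} Recall $h(\phi) = -c(\phi-\phi_-) + \phi^3 - \phi_-^3$ and, by the Rankine--Hugoniot condition (\ref{RHC}), $c = \phi_+^2 + \phi_-^2 + \phi_-\phi_+$. Since $\phi_+$, $\phi_-$ and $\phi_c = -(\phi_-+\phi_+)$ are the three roots of $h$, we can factor
\[
h(\phi) = (\phi - \phi_-)(\phi - \phi_c)(\phi - \phi_+)\,.
\]
This factorisation is the key structural fact; I would verify it by matching leading coefficients and the product of roots (note $\phi_-\,\phi_c\,\phi_+ = -(\phi_-+\phi_+)\phi_-\phi_+$, which should equal $\phi_-^3 - c\phi_- = \phi_-^3 - (\phi_+^2+\phi_-^2+\phi_-\phi_+)\phi_-$; a routine check). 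Now fix $\phi \le -\phi_- < 0$. Then $\phi - \phi_- < 0$. For the other two factors: under (\ref{lin:ass}) we have $\phi_c \in (\phi_+,\phi_-)$ with $\phi_+ < 0 < \phi_-$, so in particular $-\phi_- < \phi_+ \le \phi_c$ is \emph{not} automatic — rather, since $\phi_+ > -\phi_-$ would contradict... here one must use $\phi \le -\phi_-$ together with the sign information. Actually $\phi_c = -(\phi_-+\phi_+)$ and $\phi_+ \in (-\phi_-,0)$ gives $\phi_c \in (-\phi_-, \phi_-)$, hence $\phi \le -\phi_- < \phi_c$ and $\phi \le -\phi_- < \phi_+$ as well (since $\phi_+ > -\phi_-$). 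Therefore all three factors $\phi-\phi_-$, $\phi-\phi_c$, $\phi-\phi_+$ are negative, so $h(\phi) < 0$, and
\[
h(\phi) = \phi^3\Bigl(1 - \tfrac{\phi_-}{\phi}\Bigr)\Bigl(1 - \tfrac{\phi_c}{\phi}\Bigr)\Bigl(1 - \tfrac{\phi_+}{\phi}\Bigr)\,.
\]
Each factor $1 - \phi_\ast/\phi$ (with $\phi_\ast \in \{\phi_-,\phi_c,\phi_+\}$) satisfies, for $\phi \le -\phi_- < 0$: since $\phi_\ast/\phi \in [-1,1)$ roughly, one gets $1 - \phi_\ast/\phi \in (0,2]$, with the minimum of the product over $\phi \le -\phi_-$ attained at the endpoint $\phi = -\phi_-$. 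Plugging $\phi = -\phi_-$ into the product of the three correction factors gives exactly $-2(\phi_-+\phi_+)\phi_+/\phi_-^2$, which is $C_h$; and this quantity is $<1$ precisely because of (\ref{lin:ass})/(\ref{nec:cond0}). For the lower bound $2\phi^3 \le h(\phi)$ (recall $\phi^3<0$, so this says $h$ is not too negative): each factor is at most $2$ but the product of all three can exceed $2$, so one argues more carefully — e.g. on $(-\infty,-\phi_-]$ the function $\psi(\phi) := h(\phi)/\phi^3$ is monotone and bounded, with $\lim_{\phi\to-\infty}\psi(\phi) = 1$ and $\psi(-\phi_-) = C_h < 1$; hence $C_h \le \psi(\phi) \le 1 < 2$, giving $2\phi^3 \le C_h\phi^3 \le h(\phi) < \phi^3 \le 0$... wait, this gives something sharper than $2\phi^3$, so $2\phi^3 \le h(\phi)$ holds a fortiori. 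The cleanest route: show $\psi$ is monotone on $(-\infty,-\phi_-]$ by differentiating, identify its range as $[C_h,1)$, and read off both inequalities.

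\medskip

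\noindent\textbf{Step 2: the bound on $H(\phi) - H(\phi_-)$.} Write $H(\phi) - H(\phi_-) = \int_{\phi_-}^{\phi} h(y)\,dy = -\int_{\phi}^{\phi_-} h(y)\,dy$. Since $\phi \le -\phi_- < 0 < \phi_-$, the interval of integration $[\phi,\phi_-]$ straddles the roots $\phi_+$ and $\phi_c$. One must be slightly careful: $h$ is negative on $(-\infty,\phi_+)$, positive on $(\phi_+,\phi_c)$, negative on $(\phi_c,\phi_-)$, positive on $(\phi_-,\infty)$. So $\int_\phi^{\phi_-} h$ is a sum of contributions of mixed sign; nonetheless the claim is that $H(\phi)-H(\phi_-) > 0$ and $< C_H\phi^4$ with $C_H \le 2$. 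For positivity: since $h(\phi_-)=0$ and $h'(\phi_-)>0$ (by (\ref{sign:h:prime})), $H$ has a local minimum at $\phi_-$; and globally, for $\phi$ far to the left, $H(\phi) \sim \phi^4/4 \to +\infty$. A clean argument for $H(\phi) > H(\phi_-)$ on all of $(-\infty,-\phi_-]$: compare with $\int_\phi^{\phi_-}$ of a suitable lower bound for $-h$, or simply note $H(\phi)-H(\phi_-) = \int_\phi^{\phi_-}(-h(y))\,dy$ and bound $-h(y)$ from below by pairing up the positive and negative humps using the explicit cubic — alternatively use that $H(\phi) - H(\phi_-)$ as a function of $\phi$ has derivative $h(\phi) < 0$ on $(-\infty,-\phi_-)$, so it is strictly decreasing there, hence $> H(-\phi_-) - H(\phi_-)$, and one checks $H(-\phi_-) > H(\phi_-)$ directly from the polynomial (a finite computation). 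For the upper bound, use Step 1: on $[\phi, -\phi_-] \subset [\phi,\phi_+]$... no, $-\phi_- $ may lie to the right of $\phi_+$. The honest approach: split $\int_\phi^{\phi_-} (-h)$ at $-\phi_-$ (or at the roots), bound the ``main'' tail $\int_\phi^{-\phi_-}(-h(y))\,dy$ using $-h(y) \le -2y^3 = 2|y|^3$ from Step 1, which integrates to $\le \tfrac12|\phi|^4 - \tfrac12\phi_-^4 \le \tfrac12\phi^4$, and show the remaining bounded piece $\int_{-\phi_-}^{\phi_-}(-h)$ is dominated by the slack, i.e. absorbed into making the constant $\le 2$ rather than $\le \tfrac12$. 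Since $|\phi| \ge \phi_-$, any fixed constant is $\le (\text{const}/\phi_-^4)\,\phi^4$, so the finite correction is harmless for $|\phi|$ large; for $|\phi|$ near $\phi_-$ one needs the estimate to still hold, which is where the generous constant $C_H \le 2$ is used.

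\medskip

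\noindent\textbf{Main obstacle.} The genuinely delicate part is the lower bound $h(\phi) \ge 2\phi^3$ and, correspondingly, the \emph{lower} bound $H(\phi) - H(\phi_-) > 0$, because these are sign/inequality statements that hold only on the half-line $(-\infty,-\phi_-]$ and rely essentially on the sign conditions (\ref{sign:h:prime}) and the positivity (\ref{nec:cond0}); a naive factor-by-factor estimate gives the wrong direction or a worse constant. I expect the cleanest fix is to study the single-variable rational function $\psi(\phi) = h(\phi)/\phi^3$ (for $h$) and the primitive (for $H$), establishing monotonicity on the relevant half-line via one derivative computation and then reading off the extremal values at $\phi = -\phi_-$ and at $\phi \to -\infty$; the identification of $C_h$ with $-2(\phi_-+\phi_+)\phi_+/\phi_-^2$ comes out of evaluating $\psi(-\phi_-)$, and $C_h < 1$ is then equivalent to (\ref{lin:ass}). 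The upper bounds are comparatively routine once Step 1 is in hand.
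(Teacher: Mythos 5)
The paper gives no written proof of this lemma (it is dismissed as ``an elementary calculus exercise''), so there is no official argument to compare with; judged on its own, your proposal is correct in outline and all of its deferred computations do check out. The factorisation $h(\phi)=(\phi-\phi_-)(\phi-\phi_c)(\phi-\phi_+)$, the evaluation $\psi(-\phi_-)=-2(\phi_-+\phi_+)\phi_+/\phi_-^2$ for $\psi(\phi):=h(\phi)/\phi^3$, and the endpoint value $H(-\phi_-)-H(\phi_-)=-2\phi_-^2\phi_+(\phi_-+\phi_+)>0$ are all correct, and the monotonicity you leave to ``one derivative computation'' is true: $\psi'(\phi)=(2c\phi-3A)/\phi^4$ with $A=c\phi_--\phi_-^3=\phi_-\phi_+(\phi_-+\phi_+)$, and $\psi'\le 0$ on $(-\infty,-\phi_-]$ reduces to $\phi_-\,(5\phi_+^2+5\phi_+\phi_-+2\phi_-^2)\ge 0$, a quadratic in $\phi_+$ with negative discriminant; hence $\psi$ has range $[C_h,1)$ there, and likewise your splitting for $H$ works, since the middle piece equals $\int_{-\phi_-}^{\phi_-}(-h)=C_h\phi_-^4<\tfrac12\phi_-^4\le\tfrac12\phi^4$, so the total is below $\phi^4<2\phi^4$. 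Two small corrections. First, your displayed chain ``$2\phi^3\le C_h\phi^3\le h(\phi)<\phi^3$'' is the wrong way round: since $\phi^3<0$, the statement $\psi\in[C_h,1)$ translates into $\phi^3< h(\phi)\le C_h\phi^3$, which is exactly what the lemma needs (and then $2\phi^3\le h$ follows a fortiori, as you note); your closing instruction to ``read off both inequalities from the range'' is the right one, but the written chain should be fixed. Second, as you half-observed, the argument genuinely uses $\phi_++\phi_->0$, i.e.\ (\ref{nec:cond0}), which is not listed among the lemma's hypotheses but is indispensable: without it $-2(\phi_-+\phi_+)\phi_+\le 0$ and the claim $0<C_h$ fails (indeed $h(-\phi_-)=2\phi_-\phi_+(\phi_-+\phi_+)\ge 0$ then). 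So your invoking $\phi_+>-\phi_-$ is not a gap in your proof but an implicit standing assumption of the paper, worth stating explicitly if the proof were written out.
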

The proof is an elementary calculus exercise. It is important to recall that constants here only depend on $\phi_+$ and $\phi_-$. 

\subsection{Existence of trajectories that satisfy (\ref{far-fieldL}) and the derivation of (\ref{nec:cond0})}\label{sec:existence}

In this section we prove the existence of solutions of (\ref{TWP}) that satisfy (\ref{far-fieldL}) as $\xi\to -\infty$. Next, we prove that there are three possible behaviours of such trajectories as $\xi$ becomes large. The existence of these trajectories follows directly from the results of \cite{ACH}, this means that we shall not need to prove some steps, although we recall their proofs for completeness.

Namely, one obtains the following theorem by a direct application of the previous results and a soft argument for the unbounded case.

\begin{thm}\label{existence} Given $\tau>0$, $\phi_-$ and $\phi_+\in\R$ such that (\ref{lin:ass}) with $\phi_c=-( \phi_+ + \phi_-)<0$ holds. Then,
\begin{enumerate}
\item  There exists a solution $\phi\in C^3(-\infty,0)$ of (\ref{tw:row}) satisfying
\[
\lim_{\xi\to-\infty} \phi(\xi)=\phi_-
\]
and $\phi^\prime(\xi)<0$, for all $\xi\in(-\infty,0)$, that is unique (up to a shift in $\xi$) among all $\phi\in \phi_-+ H^2(-\infty,0) \cap C_b^3(-\infty,0)$.

\item Such solutions satisfy $\phi(\xi)<\phi_-$ for all $\xi$ in the interval of existence.

\item If such solutions are uniformly bounded, they exist for all $\xi\in\R$ and $\lim_{\xi\to\infty} \phi(\xi)\in\{\phi_+,\phi_c\}$. Otherwise, there exists a finite $\xi^*\in \R$ such that $\lim_{\xi\to(\xi^*)^-}\phi(\xi)=-\infty$.
\end{enumerate}
\end{thm}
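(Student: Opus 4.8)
The plan is to reduce everything to the results of \cite{ACH}, treating the cubic flux as a perturbation of the quadratic one only where necessary. For part (i), I would first note that the construction of a trajectory emanating from $\phi_-$ is a local statement near $\xi=-\infty$: one linearises (\ref{TWP}) around $\phi_-$ and uses the characteristic equation analysed in Appendix~\ref{roots}. By (\ref{sign:h:prime}) we have $h'(\phi_-)>0$, so the linearisation around $\phi_-$ has (at least) one root with negative real part giving rise to a one-parameter family (modulo shift) of solutions decaying to $\phi_-$; this is exactly the unstable-manifold-type construction in \cite{ACH}, and since near $\phi_-$ the cubic $h$ and its quadratic counterpart have the same sign structure of $h'$, the same fixed-point argument in $\phi_- + H^2(-\infty,0)\cap C_b^3(-\infty,0)$ applies verbatim. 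Uniqueness up to a shift is inherited from the same contraction estimate. The monotonicity $\phi'(\xi)<0$ on $(-\infty,0)$ is established as in \cite{ACH}: differentiate (\ref{tw:row}), observe that $\phi'$ solves a linear non-local equation, and use a maximum-principle / sign-propagation argument together with the fact that $\phi'<0$ near $-\infty$ (from the eigenvector of the dominant root).

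For part (ii), suppose for contradiction that $\phi(\xi_0)=\phi_-$ for some $\xi_0$ in the interval of existence (with $\phi<\phi_-$ to the left, by (i)). At such a first crossing one has $\phi'(\xi_0)\geq 0$, contradicting $\phi'<0$; more care is needed because of the non-local term, so the cleaner route is: since $\phi'<0$ on $(-\infty,0)$ by (i) and the solution is continued by the ODE-type flow, $\phi$ stays strictly below $\phi_-$ for as long as it exists — a monotone decreasing function starting below $\phi_-$ cannot return to $\phi_-$. (If one prefers, invoke that $\phi'<0$ is propagated forward by the linear equation for $\phi'$ exactly as in the proof of (i).)

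Part (iii) is where the genuinely new argument lies. Assume the solution is uniformly bounded on its maximal interval of existence. The non-local equation (\ref{TWP}), rewritten as a delayed integro-differential equation (the splitting (\ref{FD:split}) makes the term on $(-\infty,\xi_0)$ a known forcing and the rest a Caputo derivative), has the form $\tau\phi'' + \DD^\alpha_{\xi_0}[\phi] = F(\xi)$ with $F$ bounded whenever $\phi$ is bounded; by the standard continuation theory for functional-differential equations (Appendix~\ref{appendix:B}, via \cite{HVL}) a bounded solution extends to all of $\R$. Then $\phi$ is monotone decreasing (part (i), propagated) and bounded below, so $\phi_\infty:=\lim_{\xi\to\infty}\phi(\xi)$ exists and is finite with $\phi_\infty<\phi_-$. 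A standard argument then shows $\phi_\infty$ must be a zero of $h$: integrating (\ref{TWP}) or using that $\phi',\phi''\to 0$ (which follows from monotonicity plus the boundedness of $\DD^\alpha[\phi]$ via Lemma~\ref{bound:Da} and a Barbalat-type lemma for the non-local term), one gets $h(\phi_\infty)=0$; combined with $\phi_+\le\phi_\infty<\phi_-$ and the ordering (\ref{lin:ass}) this forces $\phi_\infty\in\{\phi_+,\phi_c\}$. Finally, if the solution is not uniformly bounded, then since it is decreasing it must be that it is unbounded below; boundedness-below would contradict unboundedness given monotonicity, so $\phi\to-\infty$, and the blow-up must occur at a finite $\xi^*$ rather than as $\xi\to\infty$ — this is precisely where Lemma~\ref{h:H:bounds} enters, since for $\phi\le-\phi_-$ one has $h(\phi)\le 2\phi^3$, i.e. a super-linear (indeed cubic) restoring force of the wrong sign, which via an energy estimate (multiplying by $\phi'$, using $H(\phi)-H(\phi_-)<C_H\phi^4$) forces $\phi'$ to blow up in finite $\xi$; an ODE-comparison like $\tau\phi''\gtrsim |\phi|^3$ gives finite-time escape.

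The main obstacle I anticipate is the passage $\phi_\infty\in\{\phi_+,\phi_c\}$, i.e. ruling out that a bounded decreasing solution settles at a non-equilibrium value or oscillates near the limit — for a local ODE this is immediate, but here one must control the tail of the non-local integral $\DD^\alpha[\phi]$ to conclude $h(\phi_\infty)=0$. The second delicate point is the finite-$\xi^*$ blow-up: one must rule out that $\phi\to-\infty$ only as $\xi\to+\infty$, which again requires the cubic lower bound on $h$ from Lemma~\ref{h:H:bounds} together with a careful energy/comparison estimate that dominates the (sign-favourable but nonlocal) $\DD^\alpha$ term. Both are handled by soft arguments once the a priori bounds of \cite{ACH} and Lemma~\ref{bound:Da} are in hand, which is why the theorem is described in the text as following "by a direct application of the previous results and a soft argument for the unbounded case."
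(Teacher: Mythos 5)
Your reduction of part (i) to the construction in \cite{ACH} is essentially the paper's route (one small slip: the admissible mode at $\xi=-\infty$ comes from the unique \emph{positive real} root of $\tau z^2+z^\alpha-h'(\phi_-)$, not from a root with negative real part). The genuine gap is your repeated use of \emph{global} monotonicity: you assume that $\phi'<0$, which the construction gives only on $(-\infty,0)$, propagates to the whole maximal interval of existence. This is neither proved in your sketch nor available in the paper --- monotonicity of the profiles beyond the initial interval is precisely what remains open (even for the modified problem with small $\tau$ it is only conjectured; Theorem~\ref{mono:tau:small:tail} was devised as a substitute), and Theorem~\ref{existence} itself asserts $\phi'<0$ only on $(-\infty,0)$. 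Your arguments for (ii) (``a monotone decreasing function cannot return to $\phi_-$''), for the bounded case of (iii) (monotone bounded implies a limit, plus a Barbalat-type step), and for the unbounded case (``since it is decreasing it must go to $-\infty$'') all rest on this unproved propagation, so they have a hole. The paper instead proves (ii) via the energy identity of Lemma~\ref{good:sign} (a return to $\phi_-$ would force $\int\phi'\DD^\alpha[\phi]\,d\xi=0$, hence $\phi\equiv\phi_-$), obtains the bounded case of (iii) from \cite[Lemma~6]{ACH} without any monotonicity, and --- this is the genuinely new content --- devotes Lemma~\ref{noosci} to excluding oscillatory unbounded behaviour: at a hypothetical sequence of local minima tending to $-\infty$ it combines the interpolation bound $|\DD^\alpha[\phi]|\le C_\alpha\|\phi\|_\infty^{1-\alpha}\|\phi'\|_\infty^{\alpha}$ (Lemma~\ref{bound:Da}), the cubic bound $h(\phi)<C_h\phi^3$ (Lemma~\ref{h:H:bounds}) and the energy bound $\tfrac{\tau}{2}(\phi')^2\le C_H\phi^4$ to reach $\|\phi\|_\infty^{2-\alpha}\le\tau^{-\alpha/2}C$, a contradiction.

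On the finite-$\xi^*$ statement your comparison heuristic ($\tau\phi''\lesssim-|\phi|^3$) is the right intuition, but two points need care. First, the nonlocal term is in fact sign-\emph{unfavourable} for blow-down: for a decreasing tail $\DD^\alpha[\phi]<0$, so $-\DD^\alpha[\phi]>0$ opposes $h(\phi)<0$; it must be dominated uniformly up to the blow-up point, which the paper does via $|\DD^\alpha[\phi]|\lesssim\tau^{-\alpha/2}|\phi|^{1+\alpha}\ll|\phi|^3$. Second, rather than a bare ODE comparison, Lemma~\ref{blow-down} passes to the variables $z=1/\phi$, $w=-\phi'/\phi^2$ with a rescaled independent variable, excludes extinction, blow-up and oscillation of $w$, and shows $w$ tends to a finite positive limit; this gives not only the finite $\xi^*$ but also the rate $|\phi(\xi)|(\xi^*-\xi)\to\sqrt{\tau}\,C$ in (\ref{blow-down:bhv}), which is used later in the proof of Theorem~\ref{TW:main:theorem}, so the comparison shortcut, even if completed, would not deliver what the paper needs downstream.
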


The proofs of (i), (ii) and the first part of (iii) are a consequence of the results in \cite{ACH}. We recall some of the steps of the proof for these parts below. Thus, it remains to prove the second statement of (iii): that unbounded solutions cannot be extended to the whole $\R$. In the latter case we first exclude the oscillatory behaviour (Lemma~\ref{noosci} below), i.e. we show that the limit is $-\infty$, and then we prove that this limit is reached at a finite value of $\xi$ (Lemma~\ref{blow-down}). 

Let us first summarise the implications of the results from \cite{ACH} in the proof of Theorem~\ref{existence}. For (i), one shows a `local' existence result \cite[Lemma~2]{ACH} on $(-\infty,\tilde{\xi}]$ with $\tilde{\xi}<0$ and $|\tilde{\xi}|$ sufficiently large, that is based on linearisation about $\phi=\phi_-$ as $\xi=-\infty$. For $\tau\geq 0$, all solutions of the linearised equation 
\begin{equation} \label{LTW}
 \tau v''+\DD^\alpha [v]= h'(\phi_-)v\,,
\end{equation}
in the spaces $H^s(-\infty,\tilde\xi)$ with $s\geq 2$ are of the form $v(\xi) = b e^{\lambda\xi}$, $b\in\R$, where $\lambda$ is the only real and positive root of
\begin{equation}\label{pol:left}
P(z)=\tau z^2+z^\alpha-h'(\phi_-)\,,
\end{equation}
(see Appendix~\ref{appendix:roots}, Lemma~\ref{roots}).
The statement can be proved as in \cite{CA}, where a genuinely non-linear flux function has been considered. The requirement in this proof is only to have $h'(\phi_-)>0$, which is guaranteed by (\ref{sign:h:prime}). Then we can construct solutions of the non-linear problem (\ref{TWP}) on $(-\infty,\tilde{\xi}]$ as small perturbations of the exponential solutions of (\ref{LTW}) as in \cite{ACH}. The next step is to extend for increasing $\xi$ these solutions by a continuation principle \cite[Lemma~3]{ACH} and show that there is uniqueness up to translation in $\xi$ \cite[Lemma~5]{ACH}.
  
Statement (ii) follows by the same arguments as in the proof of \cite[Lemma~4]{ACH}. Then the first statement of (iii) is a direct consequence of \cite[Lemma~6]{ACH} that guarantees that under the given assumption, the value of the limit must be a zero of $h$ different from $\phi_-$. We recall that for the quadratic case, $h$ has only two zeros, $\phi_-$ and $\phi_+$ (that would correspond to $\phi_c$ given the condition (\ref{sign:h:prime})), but in the current case $h$ has three zeros. The argument in the proof of \cite[Lemma~6]{ACH} is by contradiction, assuming that the constant value of the limit of $\phi$ is not a zero of $h$, hence we obtain the conclusion in Theorem~\ref{existence} allowing the third possibility, $\phi_+$.

As a final remark, let us mention a crucial difference for the cubic flux. For the quadratic flux one can show that solutions remain bounded and that this implies the existence of a limit value as $\xi \to \infty$ and therefore the only possible connection is to the constant value $\phi_+$, and this implies the existence (and uniqueness up to translation in $\xi$). However, in the cubic case we cannot show that solutions of (\ref{TWP}) subject to (\ref{far-fieldL}) remain bounded from below. The main difference in the arguments comes from the functional
\begin{equation}\label{energy}
H(\phi):=\int_{0}^\phi h(y) \,dy = -c\frac{\phi^2}{2} + \frac{\phi^4}{4}+A\phi
\,,\quad \mbox{with}\quad  A= c \phi_--\phi_-^3 \,.
\end{equation}
The difference $H(\phi)-H(\phi_-)$ being non-negative is a necessary condition for existence (see Lemma~\ref{good:sign} below). In the quadratic case (and, more generally, for a genuinely non-linear flux) the corresponding primitive has a zero $\bar\phi<\phi_+$ that gives a lower bound of the solutions, because in the interval $(\phi_+,\phi_-)$, $H(\phi)-H(\phi_-)> 0$. In the current case this is no longer true; $H(\phi)-H(\phi_-)> 0$ is satisfied in $(-\infty,\phi_-)$, then, the existence of a lower bound is not guaranteed by this argument.

In the next lemmas we summarise the conditions on $H$:
\begin{lem}\label{good:sign} Let $\phi$ be a solution of (\ref{TWP}) that satisfies (\ref{far-fieldL}) and let $(-\infty,\xi_{exist})$ be its interval of existence, where $\xi_{exist}\in \R\cup\{+\infty\}$. Then,
\[
\int_{-\infty}^\xi \phi^\prime(y) \DD^\alpha[\phi](y)\, dy\geq 0\, , \quad \forall \xi\in (-\infty, \xi_{exist}). 
\]
Moreover, the integral vanishes if and only if $\phi\equiv\phi_-$.

As a consequence, for all $\xi\in (-\infty, \xi_{exist})$, the following holds:
\begin{equation}\label{energy:form}
0 \leq \frac{\tau}{2}(\phi'(\xi))^2 + \int_{-\infty}^\xi \phi^\prime(y) \DD^\alpha[\phi](y) \,dy = H(\phi(\xi))-H(\phi_-)\,.
\end{equation}

\end{lem}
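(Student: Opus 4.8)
\textbf{Proof proposal for Lemma~\ref{good:sign}.}

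The plan is to derive the energy identity \eqref{energy:form} directly by multiplying the travelling wave equation \eqref{TWP} by $\phi'$ and integrating over $(-\infty,\xi]$, and then to argue that the non-local contribution has a sign. First I would multiply $\tau\phi'' + \DD^\alpha[\phi] = h(\phi)$ by $\phi'$ and integrate: the term $\tau\phi''\phi'$ integrates to $\frac{\tau}{2}(\phi'(\xi))^2$, using that $\phi'(\xi)\to 0$ as $\xi\to-\infty$ (which follows from the construction of the solution as a perturbation of $b\,e^{\lambda\xi}$ near $-\infty$, cf. Theorem~\ref{existence}(i) and the decay encoded in $\phi\in\phi_-+H^2(-\infty,0)$); the term $h(\phi)\phi'$ integrates to $H(\phi(\xi))-H(\phi_-)$ since $\phi(\xi)\to\phi_-$; and the remaining term is exactly $\int_{-\infty}^\xi \phi'(y)\DD^\alpha[\phi](y)\,dy$. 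This yields the identity in \eqref{energy:form}; the inequality $H(\phi(\xi))-H(\phi_-)\ge 0$ then follows once the non-local integral is shown to be nonnegative, since $\frac{\tau}{2}(\phi')^2\ge 0$.

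The heart of the matter is therefore the first claim: $\int_{-\infty}^\xi \phi'(y)\DD^\alpha[\phi](y)\,dy \ge 0$, with equality only for $\phi\equiv\phi_-$. The natural approach is to work in Fourier variables, exploiting that $\DD^\alpha$ has symbol $(b_\alpha + i a_\alpha\,\mathrm{sgn}(k))|k|^\alpha$ with $b_\alpha>0$. Writing $\psi = \phi - \phi_-$, which lies in $H^2(-\infty,0)$ and decays at $+\infty$ on the relevant range, one has $\DD^\alpha[\phi]=\DD^\alpha[\psi]$, and $\int_\R \psi'\,\DD^\alpha[\psi]\,dx$ should be computed via Plancherel: $\int_\R \overline{\widehat{\psi'}(k)}\,\widehat{\DD^\alpha[\psi]}(k)\,dk = \int_\R \overline{(ik)\hat\psi}\,(b_\alpha+ia_\alpha\mathrm{sgn}(k))|k|^\alpha\hat\psi\,dk$. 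The real part of $\overline{(ik)}(b_\alpha+ia_\alpha\mathrm{sgn}(k))|k|^\alpha = (-ik)(b_\alpha+ia_\alpha\mathrm{sgn}(k))|k|^\alpha$ equals $a_\alpha|k|^{\alpha+1}\ge 0$ (using $k\,\mathrm{sgn}(k)=|k|$), so the full-line integral $\int_\R \psi'\DD^\alpha[\psi]\,dx = a_\alpha\int_\R |k|^{\alpha+1}|\hat\psi(k)|^2\,dk \ge 0$, with equality iff $\psi\equiv 0$. This is precisely the dissipativity noted after \eqref{FT}. The subtlety is that we need the inequality for the integral over the half-line $(-\infty,\xi]$, not over all of $\R$; here I would invoke the monotonicity established in \cite{ACH}, namely $\phi'<0$ throughout, together with the fact — available from the estimates on $\DD^\alpha$ in \cite{ACH}, and consistent with Lemma~\ref{bound:Da} — that along such trajectories $\DD^\alpha[\phi]$ also has a definite sign relative to $\phi'$ on initial segments; more robustly, one can follow the argument of the corresponding lemma in \cite{ACH} (the quadratic case), where exactly this half-line sign property is proven by a combination of the Fourier/dissipativity computation on the tail and a direct estimate near $\xi$.

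I expect the main obstacle to be this half-line sign issue: the clean Plancherel computation gives nonnegativity only for the integral over $\R$, and extending it to arbitrary finite upper limits $\xi$ requires genuinely using the structure of the travelling wave (monotonicity, the specific decay as $\xi\to-\infty$, and properties of the Caputo-type splitting \eqref{FD:split}). The rigidity statement (equality forces $\phi\equiv\phi_-$) is then immediate from the strict positivity $a_\alpha\int|k|^{\alpha+1}|\hat\psi|^2\,dk>0$ whenever $\psi\not\equiv0$, or alternatively from the strict monotonicity $\phi'<0$ in Theorem~\ref{existence}(i), which already rules out $\phi\equiv\phi_-$ for any nontrivial solution. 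The remaining assembly into \eqref{energy:form} is routine integration by parts, with the only care needed being the justification of the boundary terms at $-\infty$, which is covered by the membership $\phi\in\phi_-+H^2(-\infty,0)\cap C_b^3(-\infty,0)$.
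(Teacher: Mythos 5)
Your derivation of the identity in \eqref{energy:form} (multiply \eqref{TWP} by $\phi'$, integrate from $-\infty$, use the exponential decay of $\phi'$ and $\phi\to\phi_-$ to kill the boundary terms) is exactly what the paper does for the second part of Lemma~\ref{good:sign}. The problem is the first claim, which you yourself identify as "the heart of the matter" and then do not actually prove. Your Plancherel computation only gives nonnegativity of the integral over all of $\R$ (and, as written, it is not even well defined there, since $\psi=\phi-\phi_-$ does not tend to zero as $\xi\to+\infty$ for the solutions in question, so $\psi\notin L^2(\R)$). Your proposed patches for the half-line case fail: monotonicity $\phi'<0$ is only known on an initial segment $(-\infty,0)$ from Theorem~\ref{existence}(i), whereas the lemma must hold for every $\xi<\xi_{exist}$ — and it is used in the paper precisely in non-monotone regimes (at the local minima $\xi_{min}^n$ in Lemma~\ref{noosci}, and at $X_{min}$ in the proof of Lemma~\ref{unbounded}) — and the asserted "definite sign of $\DD^\alpha[\phi]$ relative to $\phi'$" is unjustified once $\phi'$ changes sign, since $\DD^\alpha[\phi]$ is a non-local average that need not switch sign in step with $\phi'$.

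The missing idea, which is the content of the argument in the proof of \cite[Lemma~4]{ACH} (adapting \cite[Chapter~9]{LiebLoss}) that the paper cites, is to use the one-sided (causal) structure of $\DD^\alpha$: for $y\le\xi$ the value $\DD^\alpha[\phi](y)$ only involves $\phi'(z)$ for $z\le y\le\xi$, so with $g:=\phi'\chi_{(-\infty,\xi]}$ one has
\[
\int_{-\infty}^{\xi}\phi'(y)\,\DD^\alpha[\phi](y)\,dy
= d_\alpha\int_{\R}\int_{\R}\frac{g(y)\,g(z)\,\chi_{\{z<y\}}}{(y-z)^{\alpha}}\,dz\,dy
= \frac{d_\alpha}{2}\iint_{\R^2}\frac{g(y)\,g(z)}{|y-z|^{\alpha}}\,dz\,dy ,
\]
i.e.\ the half-line integral is the full-line quadratic form of the \emph{truncated} derivative. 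Nonnegativity, with equality if and only if $g\equiv0$ (hence $\phi\equiv\phi_-$ on $(-\infty,\xi]$ and then, by the construction/uniqueness of Theorem~\ref{existence}, $\phi\equiv\phi_-$), now follows from the positive definiteness of the Riesz kernel $|y-z|^{-\alpha}$ — equivalently, from your own symbol computation applied to $g\in L^2(\R)$ (the causal kernel $d_\alpha\theta(x)x^{-\alpha}$ has Fourier transform with real part $a_\alpha|k|^{\alpha-1}>0$). With this observation your Fourier approach closes the gap; without it, the key inequality of the lemma is not established.
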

The first statement appears in the proof of \cite[Lemma~4]{ACH} (see also \cite{CA} for a similar result) and adapts the arguments of \cite[Chapter~9]{LiebLoss}. The second part of the lemma follows by multiplying (\ref{TWP}) by $\phi'$ and integrating with respect to $\xi$. The condition (\ref{energy:form}) is on the primitive of the corresponding non-linear function of the equation, here (\ref{energy}).



Next, we show (\ref{nec:cond0}), which is a necessary condition on the far-field values for a solution of (\ref{TWP})-(\ref{far-fieldR}) to exist.
\begin{lem}\label{nec:cond:farfield}
Let $\phi_-$ and $\phi_+$ satisfy (\ref{lin:ass}). Then, the inequality
\begin{equation}\label{nec:cond}
H(\phi_+) -H(\phi_-) > 0
\end{equation}
is a necessary condition to obtain a global solution $\phi$ of (\ref{TWP}) that satisfies both (\ref{far-fieldL}) and (\ref{far-fieldR}). Moreover, (\ref{nec:cond}) is equivalent to:
\begin{equation}\label{nec:cond:equiv}
c < \phi_-^2 \quad \mbox{which implies} \quad \phi_-+\phi_+ > 0\,.
\end{equation}
\end{lem}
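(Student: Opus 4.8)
\textbf{Proof plan for Lemma~\ref{nec:cond:farfield}.}

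The plan is to derive the necessary condition \eqref{nec:cond} directly from the energy identity \eqref{energy:form} of Lemma~\ref{good:sign}, and then to translate \eqref{nec:cond} into the algebraic statement \eqref{nec:cond:equiv} by computing $H(\phi_+)-H(\phi_-)$ explicitly using the factorisation of $h$. For the first part, suppose $\phi$ is a global solution of \eqref{TWP} satisfying both \eqref{far-fieldL} and \eqref{far-fieldR}. Since the solution exists on all of $\R$, we may take $\xi\to+\infty$ in \eqref{energy:form}. The left-hand side is nonnegative for every finite $\xi$, and the term $\int_{-\infty}^\xi \phi'(y)\DD^\alpha[\phi](y)\,dy$ is nondecreasing in $\xi$ (its integrand is, up to the factor controlled in Lemma~\ref{good:sign}, of one sign in the relevant integral sense), so the limit of the right-hand side, namely $H(\phi_+)-H(\phi_-)$, must be nonnegative. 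To promote this to the \emph{strict} inequality I would use the ``moreover'' clause of Lemma~\ref{good:sign}: the integral vanishes only for the constant solution $\phi\equiv\phi_-$, which does not satisfy \eqref{far-fieldR} since $\phi_+\ne\phi_-$; hence $\int_{-\infty}^{+\infty}\phi'\DD^\alpha[\phi] > 0$, and combined with $\frac{\tau}{2}(\phi'(\xi))^2 \to 0$ (which holds because $\phi \in C_b^3$ forces $\phi'$ to decay along a sequence, and the energy is monotone so the limit exists and equals $0$) we get $H(\phi_+)-H(\phi_-) > 0$.

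For the equivalence, recall $h(\phi) = -c(\phi-\phi_-)+\phi^3-\phi_-^3$ has roots $\phi_+,\phi_-,\phi_c$ with $\phi_c = -(\phi_-+\phi_+)$, so $h(\phi) = (\phi-\phi_-)(\phi-\phi_+)(\phi-\phi_c)$. Then $H(\phi_+)-H(\phi_-) = \int_{\phi_-}^{\phi_+} h(y)\,dy = \int_{\phi_-}^{\phi_+}(y-\phi_-)(y-\phi_+)(y-\phi_c)\,dy$. I would evaluate this integral by the substitution $y = \phi_- + t(\phi_+-\phi_-)$, $t\in[0,1]$, or simply expand and integrate termwise; either way one obtains a closed-form polynomial in $\phi_-,\phi_+$. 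A cleaner route: using $H(\phi) = -c\phi^2/2 + \phi^4/4 + A\phi$ from \eqref{energy} with $A = c\phi_- - \phi_-^3$, compute
\[
H(\phi_+)-H(\phi_-) = -\frac{c}{2}(\phi_+^2-\phi_-^2) + \frac{1}{4}(\phi_+^4-\phi_-^4) + (c\phi_--\phi_-^3)(\phi_+-\phi_-).
\]
Factoring out $(\phi_+-\phi_-)$ (which is negative, since we assume $\phi_+ < \phi_-$) and substituting $c = \phi_+^2+\phi_-^2+\phi_-\phi_+$ from \eqref{RHC}, the bracketed cofactor simplifies — after collecting terms — to an expression proportional to $(\phi_-^2 - c)$ times a manifestly positive factor (a sum of squares or a square times a positive constant in $\phi_-,\phi_+$). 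Dividing through by the negative quantity $(\phi_+-\phi_-)$ then shows $H(\phi_+)-H(\phi_-)>0 \iff \phi_-^2 - c > 0 \iff c < \phi_-^2$. Finally, $c < \phi_-^2$ together with $c = \phi_+^2+\phi_-^2+\phi_-\phi_+$ gives $\phi_+^2 + \phi_-\phi_+ < 0$, i.e. $\phi_+(\phi_++\phi_-) < 0$; since $\phi_+ < 0$ (which follows from \eqref{lin:ass}, as noted right after \eqref{sign:h:prime}), we conclude $\phi_++\phi_- > 0$.

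The main obstacle I anticipate is the first part: justifying that one may pass to the limit $\xi\to+\infty$ in the energy identity and that the limit of the nonlocal integral term is strictly positive. The pointwise nonnegativity and monotonicity are given by Lemma~\ref{good:sign}, so the limit exists in $[0,+\infty]$; one must rule out that it is $+\infty$, but this is automatic since the right-hand side $H(\phi(\xi))-H(\phi_-)$ is bounded (as $\phi$ is a bounded solution converging to $\phi_+$, so $H(\phi(\xi)) \to H(\phi_+)$ is finite). The strictness then reduces cleanly to the ``if and only if'' in Lemma~\ref{good:sign}. The algebraic equivalence is routine once the polynomial cofactor is correctly factored; the only care needed there is tracking the sign of $(\phi_+-\phi_-)$ throughout.
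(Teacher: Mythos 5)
Your proposal follows essentially the same route as the paper: pass to the limit $\xi\to\infty$ in the energy identity (\ref{energy:form}) of Lemma~\ref{good:sign}, obtain $H(\phi_+)-H(\phi_-)=\int_{\R}\phi'\,\DD^\alpha[\phi]\,d\xi\geq 0$ with strictness coming from the ``moreover'' clause since $\phi\not\equiv\phi_-$, and then check (\ref{nec:cond:equiv}) by the elementary computation using the Rankine--Hugoniot value of $c$ in (\ref{RHC}) and $\phi_+<0$ (the paper compresses this into ``elementary computations''). The only cosmetic caveat is that your appeal to monotonicity of $\xi\mapsto\int_{-\infty}^{\xi}\phi'\,\DD^\alpha[\phi]\,dy$ is neither justified (the integrand is not pointwise signed) nor needed, since that term converges because the right-hand side $H(\phi(\xi))-H(\phi_-)-\tfrac{\tau}{2}(\phi'(\xi))^2$ converges once $\phi\to\phi_+$ and $\phi'\to 0$.
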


\begin{proof}
Suppose that $\phi$ is a global solution of (\ref{TWP}) satisfying (\ref{far-fieldL}) and (\ref{far-fieldR}). Then (\ref{energy:form}) holds, and taking the limit $\xi\to\infty$ yields 
\[
H(\phi_+) - H( \phi_-) = \int_{\R} \phi^\prime(\xi) \DD^\alpha[\phi](\xi) \,d\xi \geq 0.
\]  
We observe that $H(\phi) > H(\phi_-)$ for all $\phi\neq \phi_-$, and in particular (\ref{nec:cond}) holds by (\ref{lin:ass}). The assertion (\ref{nec:cond:equiv}) follows from (\ref{nec:cond}) by elementary computations.
\end{proof}

Next we show that if a trajectory $\phi$ that satisfies (\ref{far-fieldL}) becomes unbounded, then it cannot oscillate below a certain value:
\begin{lem}[Non-oscillatory behaviour]\label{noosci}
 Let $\phi \in C^3_b(-\infty,0)$ be a solution as constructed in Theorem~\ref{existence} (i)-(ii). If the continuation of $\phi$ becomes unbounded, then there exists $\xi^* \in \R\cup \{+\infty\}$ such that $\lim_{\xi \to (\xi^*)^-}\phi(\xi)=-\infty$.
\end{lem}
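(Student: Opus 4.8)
The plan is to argue by contradiction: suppose the continuation of $\phi$ is unbounded but does \emph{not} tend to $-\infty$. Since by Theorem~\ref{existence}(ii) we have $\phi(\xi) < \phi_-$ on the whole interval of existence, and since (by the local existence/continuation machinery inherited from \cite{ACH}) $\phi$ is $C^3$ and cannot cease to exist while remaining bounded, the negation of the conclusion forces $\liminf_{\xi} \phi(\xi) = -\infty$ while $\limsup_{\xi} \phi(\xi) =: L > -\infty$. In particular $\phi$ must oscillate: there is an increasing sequence of local minima $\xi_n$ with $\phi(\xi_n) \to -\infty$, interlaced with local maxima where the value stays above some fixed level. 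The strategy is to derive a contradiction from the energy identity (\ref{energy:form}) together with the sign information on the non-local term from Lemma~\ref{good:sign} and the growth estimates of Lemma~\ref{h:H:bounds}.

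The key steps, in order, are as follows. First, fix a level $\bar\phi \le -\phi_- < 0$ low enough that Lemma~\ref{h:H:bounds} applies, and low enough that $\bar\phi < L - 1$; then pick a local minimum $\xi_n$ with $\phi(\xi_n)$ very negative, and just before it a point $\eta_n < \xi_n$ where $\phi$ last crossed the level $\bar\phi$ downward, so $\phi(\eta_n) = \bar\phi$, $\phi'(\eta_n) \le 0$, and $\phi(\xi) \le \bar\phi$ on $[\eta_n, \xi_n]$ with $\phi'(\xi_n) = 0$. Second, evaluate (\ref{energy:form}) at $\xi = \xi_n$: since the non-local integral term is nonnegative by Lemma~\ref{good:sign}, we get $H(\phi(\xi_n)) - H(\phi_-) \le \frac{\tau}{2}(\phi'(\xi_n))^2 + \int_{-\infty}^{\xi_n}\phi'\,\DD^\alpha[\phi] = H(\phi(\xi_n)) - H(\phi_-)$, which by itself gives nothing — so instead the real work is to bound $\phi''$ at the minimum from below using the travelling wave equation (\ref{TWP}), $\tau\phi''(\xi_n) = h(\phi(\xi_n)) - \DD^\alpha[\phi](\xi_n)$, and to control $\DD^\alpha[\phi](\xi_n)$ via Lemma~\ref{bound:Da}. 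Third, combine: at a local minimum $\phi''(\xi_n) \ge 0$, so $\DD^\alpha[\phi](\xi_n) \le h(\phi(\xi_n)) < C_h \phi(\xi_n)^3 < 0$ by Lemma~\ref{h:H:bounds}; but Lemma~\ref{bound:Da} bounds $|\DD^\alpha[\phi](\xi_n)|$ by $C_\alpha (\sup|\phi|)^{1-\alpha}(\sup|\phi'|)^{\alpha}$, which forces \emph{both} $\sup_{(-\infty,\xi_n]}|\phi|$ and $\sup_{(-\infty,\xi_n]}|\phi'|$ to blow up as $n \to \infty$, at commensurate rates.

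The heart of the argument — and the main obstacle — is turning this into a genuine contradiction rather than merely ``everything blows up.'' Two routes seem viable. One is to show the blow-up of $\phi'$ is incompatible with oscillation of bounded-from-above type: between a deep minimum $\xi_n$ and the next maximum, $\phi$ must climb back up by at least $|\phi(\xi_n)| - |L|$, yet the energy identity caps $\frac{\tau}{2}(\phi')^2$ by $H(\phi) - H(\phi_-) \le C_H\phi^4$ (Lemma~\ref{h:H:bounds}), so $|\phi'| \lesssim |\phi|^2$; integrating this Riccati-type bound along the ascent shows $\phi$ cannot return to a bounded level in finite $\xi$-length once it is deep enough — but it also cannot take infinitely long, because then $\int_{-\infty}^\infty \phi'\DD^\alpha[\phi]$ and the energy would have to stay finite while $\phi'$ is large over a long stretch, contradicting $\limsup\phi = L$ finite via a more careful accounting of the monotone pieces. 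The cleaner route, which I would pursue first, is to mimic exactly the ``no oscillation'' argument of the quadratic case: use the representation splitting $\DD^\alpha[\phi] = \DD^\alpha_{\xi_0}[\phi] + (\text{bounded-below tail})$ and exploit that on a long descending excursion the Caputo part $\DD^\alpha_{\xi_0}[\phi](\xi) = d_\alpha\int_{\xi_0}^\xi \phi'(y)(\xi-y)^{-\alpha}\,dy$ is negative (since $\phi' < 0$ there), reinforcing $\phi'' = \tau^{-1}(h(\phi) - \DD^\alpha[\phi])$ to stay negative and thus preventing the trajectory from ever turning back once it dips below $\bar\phi$ — which is precisely the monotonicity that yields $\lim_{\xi\to(\xi^*)^-}\phi(\xi) = -\infty$. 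Making the ``tail'' term's boundedness uniform along the excursion, and handling the transitional region where $\phi$ first goes below $\bar\phi$, is the technical crux; Lemma~\ref{bound:Da} applied on $(-\infty,\xi_0]$ supplies exactly the needed uniform control of that tail in terms of the supremum of $\phi$ and $\phi'$ up to the (fixed, finite) splitting point $\xi_0$.
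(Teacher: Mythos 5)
You assemble exactly the ingredients of the paper's own proof: at a deep local minimum $\xi_n$ one has $\phi''(\xi_n)\ge 0$, hence $\DD^\alpha[\phi](\xi_n)\le h(\phi(\xi_n))<C_h\,\phi(\xi_n)^3<0$ by Lemma~\ref{h:H:bounds}; Lemma~\ref{bound:Da} gives $|\DD^\alpha[\phi](\xi_n)|\le C_\alpha\|\phi\|_{L^\infty(-\infty,\xi_n)}^{1-\alpha}\|\phi'\|_{L^\infty(-\infty,\xi_n)}^{\alpha}$; and the energy identity (\ref{energy:form}) together with Lemma~\ref{h:H:bounds} gives $\tfrac{\tau}{2}(\phi')^2\le C_H\phi^4$, i.e. $\|\phi'\|_{L^\infty(-\infty,\xi_n)}\le 2\tau^{-1/2}\|\phi\|_{L^\infty(-\infty,\xi_n)}^{2}$, which you even quote in your first route. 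The gap is that you stop just short: you declare that these facts only say ``everything blows up'' and that a further mechanism is needed, whereas they already close the argument by exponent counting. Choosing the minima to realize the running infimum (so that $\|\phi\|_{L^\infty(-\infty,\xi_n)}=|\phi(\xi_n)|>\phi_-$), the three inequalities combine to $\|\phi\|_{L^\infty(-\infty,\xi_n)}^{2+\alpha}\le \tfrac{C_\alpha}{C_h}\|\phi'\|_{L^\infty(-\infty,\xi_n)}^{\alpha}\le 2^\alpha\tfrac{C_\alpha}{C_h}\,\tau^{-\alpha/2}\|\phi\|_{L^\infty(-\infty,\xi_n)}^{2\alpha}$, hence $\|\phi\|_{L^\infty(-\infty,\xi_n)}^{2-\alpha}\le C\tau^{-\alpha/2}$ with $C$ independent of $n$, since $2+\alpha-2\alpha=2-\alpha>0$. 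This uniform bound contradicts $\phi(\xi_n)\to-\infty$, and that is the entire proof in the paper.

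The two substitutes you propose instead would not succeed. In the first, the bound $|\phi'|\le 2\tau^{-1/2}\phi^2$ means $|(1/\phi)'|\le 2\tau^{-1/2}$, so climbing from an arbitrarily deep minimum back to a fixed level $\bar\phi$ costs a $\xi$-length of order $\sqrt{\tau}/|\bar\phi|$, bounded independently of the depth; it does not show the ascent takes infinitely long, so no contradiction emerges along that route. In the second, the sign is backwards: on a descending stretch $\DD^\alpha[\phi]<0$, but in $\tau\phi''=h(\phi)-\DD^\alpha[\phi]$ this contributes the \emph{positive} term $-\DD^\alpha[\phi]>0$, i.e. the non-local term opposes rather than reinforces $\phi''<0$, so you cannot conclude that the trajectory never turns back once it dips below $\bar\phi$; indeed the lemma's proof never establishes (and does not need) such a monotonicity statement, only the impossibility of arbitrarily deep local minima via the estimate above.
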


\begin{proof} 
  
  Since $\phi(\xi)<\phi_-$ for all $\xi\in \R$ and, by assumption, $\phi$ is unbounded, there must exists $\xi^* \in \R\cup \{+\infty\}$ such that
  $\liminf\limits_{\xi \to (\xi^*)^-}{ \phi(\xi)} = -\infty$. We have to prove that $\lim\limits_{\xi \to (\xi^*)^-}{ \phi(\xi)} = -\infty$.

  We argue by contradiction, and assume that $\displaystyle\lim_{\xi \to (\xi^*)^-}{\phi(\xi)}\neq -\infty$ and does not exist. Then, by regularity, $\phi$ becomes unbounded in an oscillatory fashion as $\xi$ increases. This means that there exists a sequence of local minima as follows: $\{\xi^n_{min}\}_{n\geq 0}$ is increasing and satisfies $\xi^n_{min} \to (\xi^*)^-$, $\phi'(\xi^n_{min})=0$, $\phi''(\xi^n_{min}) > 0$ and $\phi(\xi^n_{min}) < -\phi_-$ for all $n \geq 0$ and $\{\phi(\xi^n_{min})\}_{n\geq0}$ is a monotone decreasing sequence with $\lim_{n\to \infty}{\phi(\xi^n_{min})} = -\infty$.

Observe that then also $h(\phi(\xi_{min}^n))<0$ for all $n\geq 0$, and this gives 
\begin{equation}\label{TWES}
\DD^\alpha[\phi](\xi_{min}^n) = h(\phi(\xi_{min}^n)) - \tau \phi''(\xi_{min}^n) < 0 \quad \text{for all} \quad n\in \N.
\end{equation}

Lemma~\ref{bound:Da} gives a bound for the fractional derivative in terms of $\phi$ and its first derivative. Namely, there exists $C_\alpha >0$ (independent of $\tau$) such that
\[
\left|\DD^\alpha[\phi](\xi)\right| \leq C_\alpha \|\phi\|_{L^\infty(-\infty,\xi)}^{1-\alpha} \|\phi'\|_{L^\infty(-\infty,\xi)}^{\alpha} \ \ \text{for all} \ \xi\in (-\infty, \xi^*)
\]
and, in particular, for each $\xi=\xi_{min}^n$ we get the lower bound
\begin{equation}\label{Damin:bound:below}
0>\DD^\alpha[\phi](\xi_{min}^n) \geq -C_\alpha \|\phi\|_{L^\infty(-\infty,\xi_{min}^n)}^{1-\alpha} \|\phi'\|_{L^\infty(-\infty,\xi_{min}^n)}^{\alpha}.
\end{equation}
On the other hand, considering Lemma~\ref{h:H:bounds} and that $\phi''(\xi_{min}^n)>0$, we get the upper bound
 \begin{equation}\label{Damin:bound:above}
 \DD^\alpha[\phi](\xi_{min}^{n}) < h(\phi(\xi_{min}^{n})) < -C_h |\phi(\xi_{min}^{n})|^3 = -C_h  \|\phi\|_{L^\infty(-\infty,\xi_{min}^n)}^{3}.
 \end{equation}
Now, combining (\ref{Damin:bound:below}) and (\ref{Damin:bound:above}), we obtain
\[
 -C_\alpha \|\phi\|_{L^\infty(-\infty,\xi_{min}^n)}^{1-\alpha} \|\phi'\|_{L^\infty(-\infty,\xi_{min}^n)}^{\alpha} \leq - C_h \|\phi\|_{L^\infty(-\infty,\xi_{min}^n)}^{3}
 \]
 which is equivalent to
  \begin{equation}\label{norm1}
    \|\phi\|_{L^\infty(-\infty,\xi_{min}^n)}^{\alpha+2} \leq \frac{C_\alpha}{C_h} \|\phi'\|_{L^\infty(-\infty,\xi_{min}^n)}^{\alpha}.
  \end{equation}

We obtain an upper bound on $\|\phi'\|_{L^\infty(-\infty,\xi_{min}^n)}$ using Lemma~\ref{good:sign} and Lemma~\ref{h:H:bounds}, as follows:
\[
 \frac{\tau}{2}\left(\phi'(\xi)\right)^2 \leq H(\phi(\xi)) - H(\phi_-) \leq C_H \phi^4(\xi) \leq 2 \left(\phi(\xi_{min}^n))\right)^4, \quad \forall \xi \in (-\infty,\xi_{min}^n)
\]
which implies
 \begin{equation}\label{norm2}
  \|\phi'\|_{L^\infty(-\infty,\xi_{min}^n)}^2 \leq  \frac{4}{\tau} \|\phi\|_{L^\infty(-\infty,\xi_{min}^n)}^4.
  \end{equation}
  Finally, combining (\ref{norm1}) and (\ref{norm2}) implies for all $n\in\N$ that
  \[
  \|\phi\|_{L^\infty(-\infty,\xi_{min}^n)}^{2-\alpha} 
  < \tau^{-\alpha/2}C,
  \]
  with $C=2^\alpha\frac{C_\alpha}{C_h}$. But this contradicts that $\lim_{n\to\infty} \phi(\xi_{min}^n) = -\infty$ and such sequences of local minima cannot exists. Thus it must be that $\lim_{\xi\to (\xi^*)^-} \phi(\xi)=-\infty$.
  
\end{proof}

Next we show that $\xi^*$ of the previous lemma is a finite value.
\begin{lem}\label{blow-down}
Let $\phi \in C^3_b(-\infty,0)$ be a solution as constructed in Theorem~\ref{existence} (i)-(ii). If the continuation of $\phi$ becomes unbounded, then there exists a finite $\xi^* \in \R$ such that
\begin{equation}\label{phi:blow-down}
\lim_{\xi\to(\xi^*)^-}\phi(\xi) = -\infty,
\end{equation}
and, therefore, $\phi$ cannot be extended to $\R$. Moreover, the asymptotic behaviour of $\phi$ is given by,
\begin{equation}\label{blow-down:bhv}
\lim_{\xi \to (\xi^*)^-} {\left| \phi(\xi) \right| (\xi^* - \xi)}  = \sqrt{\tau} C
\end{equation}
where $C>0$ is a constant independent of $\tau$. 
\end{lem}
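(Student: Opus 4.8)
The plan is to show that near the blow-down point the travelling-wave equation (\ref{TWP}) is governed, to leading order, by the scalar ODE $\tau\psi''=\psi^{3}$ — whose solutions blow down in finite time with $|\psi|(\xi^{*}-\xi)\to\sqrt{2\tau}$ — while the non-local term $\DD^{\alpha}[\phi]$ is of strictly lower order and does not affect the leading behaviour. By Lemma~\ref{noosci} we already have $\lim_{\xi\to(\xi^{*})^{-}}\phi(\xi)=-\infty$ for some $\xi^{*}\in\R\cup\{+\infty\}$, and from the no-oscillation argument there $\phi$ is eventually strictly decreasing; so it remains to prove $\xi^{*}<\infty$ and to identify the rate. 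First I would localise: fix $\xi_{1}<\xi^{*}$ so that on $[\xi_{1},\xi^{*})$ one has $\phi'<0$, $\phi\le-2\phi_{-}<0$, and $\sup_{y\le\xi}|\phi(y)|=|\phi(\xi)|$ (the last equality is available because $\phi$ is monotone there and $|\phi(\xi_{1})|$ may be taken larger than $\sup_{(-\infty,\xi_{1}]}|\phi|$, recalling $\phi\in C_{b}^{3}(-\infty,0)$).

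Next I would extract a priori bounds on $\phi'$. Lemma~\ref{good:sign} together with Lemma~\ref{h:H:bounds} gives $\tfrac{\tau}{2}(\phi')^{2}\le H(\phi)-H(\phi_{-})\le C_{H}\phi^{4}\le 2\phi^{4}$, hence $|\phi'(\xi)|\le\tfrac{2}{\sqrt{\tau}}\phi(\xi)^{2}$ on $[\xi_{1},\xi^{*})$, and (after enlarging $\xi_{1}$) the same bound controls $\|\phi'\|_{L^{\infty}(-\infty,\xi)}$. Feeding this into Lemma~\ref{bound:Da} yields
\[
|\DD^{\alpha}[\phi](\xi)|\le C_{\alpha}\,|\phi(\xi)|^{1-\alpha}\Big(\tfrac{2}{\sqrt{\tau}}\phi(\xi)^{2}\Big)^{\alpha}=C_{\alpha}\Big(\tfrac{2}{\sqrt{\tau}}\Big)^{\alpha}|\phi(\xi)|^{1+\alpha}.
\]
Since $1+\alpha<3$ while $h(\phi)/\phi^{3}\to1$ as $\phi\to-\infty$ (recall $h(\phi)=\phi^{3}-c\phi+c\phi_{-}-\phi_{-}^{3}$), the right-hand side $h(\phi)-\DD^{\alpha}[\phi]$ of (\ref{TWP}) is negative for $|\phi|$ large; thus $\phi''<0$ on $[\xi_{1},\xi^{*})$ (after a further enlargement of $\xi_{1}$) and, moreover, $\tau\phi''/\phi^{3}\to1$ as $\xi\to(\xi^{*})^{-}$.

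Then I would prove $\xi^{*}<\infty$ by comparison. Using $\tau\phi''\le\tfrac{C_{h}}{2}\phi^{3}<0$ for $|\phi|$ large and multiplying by $\phi'<0$ (which reverses the inequality), an integration gives $(\phi'(\xi))^{2}\ge\tfrac{C_{h}}{8\tau}\phi(\xi)^{4}$ near $\xi^{*}$; combined with the upper bound this yields $\tfrac{c_{1}}{\sqrt{\tau}}\phi^{2}\le-\phi'\le\tfrac{2}{\sqrt{\tau}}\phi^{2}$ with $c_{1}:=\sqrt{C_{h}/8}>0$. Setting $w:=1/\phi<0$, we get $w'=-\phi'/\phi^{2}\in[\,c_{1}/\sqrt{\tau},\,2/\sqrt{\tau}\,]$, so $w$ increases towards $0^{-}$ at a positive rate while remaining negative, which is impossible unless $\xi^{*}<\infty$; then $1/|\phi(\xi)|=-w(\xi)=\int_{\xi}^{\xi^{*}}w'(s)\,ds$, so $|\phi(\xi)|(\xi^{*}-\xi)$ is trapped between $\sqrt{\tau}/2$ and $\sqrt{\tau}/c_{1}$. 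To promote this to the limit (\ref{blow-down:bhv}), I would apply l'Hôpital's rule to $\tfrac{\tau}{2}(\phi')^{2}$ over $\tfrac{1}{4}\phi^{4}$: both increase monotonically to $+\infty$ and the quotient of derivatives is $\tau\phi''/\phi^{3}\to1$, so $\tfrac{\tau}{2}(\phi')^{2}\sim\tfrac{1}{4}\phi^{4}$, i.e. $-\phi'\sim\phi^{2}/\sqrt{2\tau}$; hence $w'=-\phi'/\phi^{2}\to1/\sqrt{2\tau}$ and, integrating, $1/|\phi(\xi)|\sim(\xi^{*}-\xi)/\sqrt{2\tau}$, which is (\ref{blow-down:bhv}) with $C=\sqrt{2}$ (note $C$ is determined by the leading coefficient of the flux alone, independent of $\tau$ and of $\phi_{\pm}$).

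I expect the main obstacle to be the a priori estimate of the second step, namely showing $\DD^{\alpha}[\phi]=o(|\phi|^{3})$ near the blow-down. Because Lemma~\ref{bound:Da} involves $\|\phi'\|_{L^{\infty}(-\infty,\xi)}$ rather than a pointwise value, one must first combine the energy bound $|\phi'|\le\tfrac{2}{\sqrt{\tau}}\phi^{2}$ with the observation that, close to $\xi^{*}$, the suprema of $|\phi|$ and of $|\phi'|$ over $(-\infty,\xi]$ are attained at $\xi$ itself; only then does the exponent $1+\alpha<2<3$ make the non-local term negligible against $h(\phi)\sim\phi^{3}$ — which is exactly what yields the concavity $\phi''<0$, the finite-time blow-down, and the l'Hôpital identification of the rate.
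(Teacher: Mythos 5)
Your proof is correct, but it takes a genuinely different route from the paper's. Both arguments rest on the same two a priori estimates --- the energy inequality of Lemma~\ref{good:sign} combined with Lemma~\ref{h:H:bounds}, giving $|\phi'|\le C\tau^{-1/2}\phi^2$, and the interpolation bound of Lemma~\ref{bound:Da}, giving $|\DD^\alpha[\phi]|\le C\tau^{-\alpha/2}|\phi|^{1+\alpha}$ --- but they exploit them differently. The paper passes to the first-order system, introduces $z=1/\phi$, $w=-\phi'/\phi^2$ and a rescaled independent variable $s$ with $ds=-d\xi/z$, and then runs a phase-plane case analysis (no extinction, no blow-up, no oscillation of $w$) to conclude that $w$ has a finite positive limit, essentially $1/\sqrt{2\tau}$, before translating back. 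You instead stay in the variable $\xi$: the two estimates show that near $\xi^*$ the non-local term is $o(|\phi|^3)$, hence $\phi''<0$ and $\tau\phi''/\phi^3\to 1$; integrating the resulting differential inequalities gives $c_1\tau^{-1/2}\phi^2\le-\phi'\le 2\tau^{-1/2}\phi^2$, hence $\xi^*<\infty$ and the two-sided bound on $|\phi(\xi)|(\xi^*-\xi)$, and l'H\^opital applied to $(\tau/2)(\phi')^2$ against $\phi^4/4$ pins down $-\phi'/\phi^2\to 1/\sqrt{2\tau}$, i.e.\ (\ref{blow-down:bhv}) with $C=\sqrt{2}$, consistent with the paper. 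Your route is more elementary (no time reparametrisation, no oscillation case analysis) and identifies the constant explicitly; the paper's phase-plane formulation handles the possible non-monotonicity of $w$ directly, without needing your concavity step. Two minor remarks: the claim that the supremum of $|\phi'|$ over $(-\infty,\xi]$ is attained at $\xi$ is neither needed nor obvious --- the inequality you actually use, $\|\phi'\|_{L^\infty(-\infty,\xi]}\le C\tau^{-1/2}\phi(\xi)^2$, already follows from the pointwise energy bound together with $\sup_{(-\infty,\xi]}|\phi|=|\phi(\xi)|$; and the eventual strict decrease of $\phi$ near $\xi^*$ requires the same mild extension of Lemma~\ref{noosci} that the paper itself invokes at the start of its proof, so you are on equal footing there.
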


\begin{proof}
Since $\phi$ is unbounded, by Lemma~\ref{noosci} there exist $\xi^* \in \R\cup\{+\infty\}$ and $\xi_1\in \R$ such that 
\begin{equation}\label{noosci:step}
\displaystyle \lim_{\xi \to (\xi^*)^-} \phi(\xi)  = -\infty \quad \text{and} \quad \phi'(\xi)<0, \ \forall \xi \in (\xi_1,\xi^*). 
\end{equation}

It is now convenient to rewrite the equation (\ref{TWP}), as a first order system by making the change of variables
$u(\xi)=\phi(\xi)$ and $v(\xi)=\phi'(\xi)$ 
for $\xi \in (-\infty, \xi^*)$. This gives:
\begin{equation}\label{TWEsystem}
\begin{cases}
\displaystyle {u' = v,} \\ 
\displaystyle{
v' = \frac{1}{\tau}\left( h(u) - \DD^\alpha[u]\right).
}
\end{cases}
\end{equation}
We first notice that there exist some $\xi_0 \in  [\xi_1,\xi^*)$ and $C_v>0$ such that
\begin{equation}\label{u:bounds}
-\infty < u(\xi)< -\phi_-\quad \forall \xi \in [\xi_0, \xi^*),\quad  u(\xi_0)\leq u(\xi)<\phi_- \quad \forall \xi \leq \xi_0
\end{equation}
and
\begin{equation}\label{v:bounds}
  v(\xi)< -C_v, \quad \forall \xi \in (\xi_0, \xi^*).
\end{equation}
The bounds (\ref{u:bounds}) hold by (\ref{noosci:step}). Let us show (\ref{v:bounds}): If $v$ becomes unbounded there is nothing to prove, again by (\ref{noosci:step}). On the other hand, if $v$ is bounded, we get an upper bound of $v'(\xi)$, which diverges to $-\infty$ as $\xi \to (\xi^*)^-$. Indeed, applying Lemma~\ref{bound:Da}, Lemma~\ref{h:H:bounds} and an estimate like (\ref{norm2}) yield
\[
\begin{split}
v'(\xi) = \frac{1}{\tau} h(u(\xi)) - \frac{1}{\tau} \DD^\alpha[u](\xi) &\leq \frac{C_h}{\tau} u(\xi)^3 + \frac{2^\alpha C_\alpha}{\tau^{1+\alpha/2}} |u(\xi)|^{1+\alpha} \\
&= u(\xi)^3 \left(\frac{C_h}{\tau}  - \frac{2^\alpha C_\alpha}{\tau^{1+\alpha/2}} \frac{1}{|u(\xi)|^{2-\alpha}}\right) .
\end{split}
\]
The right-hand side of this inequality tends to $-\infty$ as $\xi\to(\xi^*)^-$, therefore, $\lim_{\xi\to(\xi^*)^-}v'(\xi)=-\infty$. This implies $(\ref{v:bounds})$. We can adjust the value of $\xi_0$ by taking it closer to $\xi^*$ as necessary so that both bounds hold in the same interval.

Once (\ref{u:bounds}) and (\ref{v:bounds}) are established for $\xi \in (\xi_0,\xi^*)$, we introduce the variables
\[
z:=\frac{1}{u}<0 \quad \mbox{and}\quad  w:=-\frac{v}{u^2}\geq0
\]
in such interval.

We also change the independent variable, in order to absorb $z$ in the derivative, as follows:
\begin{equation}\label{def:s}
s=s_0 - \int_{\xi_0}^{\xi}\frac{dy}{z(y)}, \quad s_0>0.
\end{equation}
Notice that $z<0$, thus $s$ is strictly increasing with respect to $\xi$.

The system (\ref{TWEsystem}) then reads:
\begin{equation}\label{system:z:w}
\begin{cases}
\displaystyle\frac{dz}{ds} = -zw,  \\ 
\displaystyle\frac{dw}{ds} = -2w^2
                        + \frac{1}{\tau}\left( h\left(\frac{1}{z}\right) - \DD^\alpha\left[\frac{1}{z} \right]\right)z^3.
\end{cases}
\end{equation}

{\bf CASE I:} We first analyse the possibilities of `extinction' and of `blow-up' for $w$ at a finite $s$.
Let us assume the former: there exists a finite $\bar{s}>s_0$ such that $\lim_{s\to \bar{s}} w(s) = 0$.
Then, at $ \bar{\xi}\leq \xi^*$, given by $\bar{s}= s_0 - \int_{\xi_0}^{\bar{\xi}} u(y)dy$, either $v(\bar{\xi})=0$ or $\lim_{\xi\to(\bar{\xi})^-} u(\xi)=-\infty$. The former contradicts (\ref{v:bounds}). The latter case implies that either $\bar{s}$ is infinite, which gives a contradiction, or that $u$ is integrable in $(\xi_0,\bar{\xi})$. In that case, we have that $\bar{\xi}=\xi^*<\infty$, but also that  for $\xi\in(\xi_0,\xi^*)$ there are very small constants $\e_1>0$ and $\e_2>0$, such that 
\begin{equation}\label{lim:w:0}
\e_1< -\frac{u'(\xi)}{u(\xi)^2}< \e_2
\end{equation}
and integrating gives
\[
 \frac{1}{\e_2 (\xi-\xi_0)+1/u(\xi_0)}< u(\xi) <\frac{1}{\e_1 (\xi-\xi_0)+1/u(\xi_0)}
\]
so that $\xi^*\leq \xi_0 - \frac{\e_1}{u(\xi_0)}$. The constant $\e_1$ tends to $0$ as $\xi\to(\xi^*)^-$ (because we have only used the definition of $w$ and the limit $w\to0$ in the estimate (\ref{lim:w:0})), implying that $\xi^*=\xi_0$, but this gives a contradiction, with the choice of $\xi_0$. This implies that $w$ cannot tend to $0$ at a finite value. 


Let us assume now that $w$ exhibits `blow-up'. Then there exists a finite $s^*>s_0$ such that
$\lim_{s\to s^*} w(s) =\infty$.  Thus, for $\xi\in(\xi_0,\xi^*)$ sufficiently close to $\xi^*$,
\[
-\frac{u'(\xi)}{u^2(\xi)} \gg 1 
\]
and this, integrating over $(\xi_0,\xi)$, gives
\[
u(\xi)< \frac{1}{\xi - \xi_0 + 1/u(\xi_0)}, 
\]
but, this implies that $s^*$ cannot be finite (using the above bound in the integral of (\ref{def:s}) gives $\infty$ as lower bound) and $w$ does not blow up.

{\bf CASE II:} Let us now assume that $w$ is defined for all $s\in\R$. We shall show that $\exists\lim_{s\to\infty}w(s)<\infty$, and then apply the definition of the new variables and integrate to get the result. First, we obtain estimates from (\ref{system:z:w}).

Integrating the first equation in (\ref{system:z:w}), we get the following:

\begin{equation}\label{zto0}
 z(s)= z_0 e^{-\int_{s_0}^s w(s) \, ds} \rightarrow 0^-\quad  \mbox{as}\quad s\to\infty.
\end{equation} This limit is clear if $\int_{s_0}^\infty w(s) \, ds=+\infty$. If $\int_{s_0}^\infty w(s) \, ds<+\infty$, in particular $\lim_{s\to\infty} w(s)=0$, then an argument as for the extinction of $w$ gives that either $\lim_{s\to\infty} v(s)=0$, which is in contradiction with (\ref{v:bounds}), or that $\lim_{s\to\infty} u(s)=-\infty$, which also implies that $z(s)\to 0$ as $s\to\infty$.

Now, we observe that $\DD^\alpha[u]<0$ for all $\xi \in (\xi_0,\xi^*)$. To prove this we first split $\DD^\alpha[u](\xi)$ as follows,
\[
\frac{1}{d_\alpha} \DD^\alpha[u](\xi) =
\int_{-\infty}^{\xi_0} \frac{v(y)}{(\xi-y)^\alpha}\, dy  + \int_{\xi_0}^\xi\frac{v(y)}{(\xi-y)^\alpha} \,dy.
\]
The second integral is negative due to (\ref{v:bounds}). And the first is also negative, since integrating by parts and using (\ref{u:bounds}) we get:
\[
\begin{split}
\int_{-\infty}^{\xi_0}\frac{v(y)}{(\xi-y)^\alpha}\, dy =& -\alpha \int_{-\infty}^{\xi_0}\frac{u(y)}{(\xi-y)^{\alpha+1}}\, dy + \frac{u(\xi_0)}{(\xi-\xi_0)^\alpha} \\
<& \ \alpha |u(\xi_0)|  \int_{-\infty}^{\xi_0}\frac{dy}{(\xi-y)^{\alpha+1}} + \frac{u(\xi_0)}{(\xi-\xi_0)^\alpha} =0.
\end{split}
\]

Let us now get a lower bound for $\DD^\alpha[1/z] = \DD^\alpha[u]$ by rewriting Lemma~\ref{bound:Da} in terms of $z$ and $w$. We have two cases for all $s>s_1$ for some $s_1\geq s_0$ large enough:
\begin{equation}\label{fracBoundzw}
  0<
  -\DD^\alpha\left[\frac{1}{z}\right] \leq D(z,w):=\begin{cases} C'_\alpha \left|\frac{1}{z}\right|^{1-\alpha}, \quad&\mbox{if}\ |v| \ \text{stays bounded},  \\ C_\alpha \left|\frac{1}{z}\right|^{1+\alpha} (\sup_{s>s_1} w)^\alpha,\quad&\mbox{if} \ |v| \ \text{becomes unbounded},
  \end{cases}
\end{equation}
for some $C'_\alpha$, $C_\alpha >0$. 

We now get bounds for the non-linear term using (\ref{zto0}):
Let $s_2\geq s_0$ large enough such that for all $s>s_2$
\[
|z(s)|<\min\left\{\frac{c}{-(c\phi_- - \phi_-^3)}, \frac{1}{\sqrt{c}}\right\},
\]
then
\begin{equation}\label{h:z:small:bounds}
0<\frac{1}{\tau}-\frac{c}{\tau} z^2 \leq \frac{1}{\tau}z^3 h\left(\frac{1}{z}\right) = \frac{1}{\tau} - \frac{1}{\tau}(c - (c\phi_- - \phi_-^3)z)z^2 \leq \frac{1}{\tau} \ \mbox{for all}\ s>s_2.
\end{equation}

With (\ref{fracBoundzw}) and (\ref{h:z:small:bounds}) we have the following bounds on $dw/ds$ using the second equation in (\ref{system:z:w}):
\begin{equation}\label{wBounds}
\frac{1}{\tau} - \frac{c}{\tau} z^2 -2w^2 - \frac{1}{\tau}(-z)^3 D(z,w) \leq  \frac{dw}{ds} \leq \frac{1}{\tau} - 2w^2,  \quad \forall s>\max\{s_1,s_2\}.
\end{equation}

In order to prove that $w\to C>0$ as $s \to \infty$, we now argue by contradiction. First, we assume that $w$ becomes unbounded, then (\ref{wBounds}) implies that $w$ is decreasing as long as $w > 1/\sqrt{2\tau}$, but this contradicts that $0<w(s)$ becomes unbounded. Now, we assume that the limit is finite with $C=0$, since also $\lim_{s\to \infty} z(s)=0$, these implies that $dw(s)/ds>0$ for all $s$ large enough. This contradicts that $C=0$ and $w(s)>0$.

The last possibility that we have to exclude is that $w$ oscillates without limit. Let $M=\sup_{s>\max\{s_1,s_2\}} w(s)<\infty$, then using (\ref{wBounds}) we get there exists $C>0$ and $s_3\geq \max\{s_1,s_2\}$ such that
\begin{equation}\label{wgoodBounds}
\frac{1}{\tau} -2w(s)^2 - C |z(s)|^{2-\alpha} \leq \frac{dw(s)}{ds} \leq \frac{1}{\tau} -2w(s)^2 \quad \mbox{for all}\quad s>s_3.
\end{equation}
Here we have used (\ref{fracBoundzw}) and noticed that for $|z|$ small enough we have $|z|^{2+\alpha}$, $|z|^2<|z|^{2-\alpha}$.

Observe that if $M<1/\sqrt{2\tau}$, no oscillations are possible in the limit,
because there exists $s_4\geq s_3$  such that
\[
0<\frac{1}{\tau}  -2M^2 - C |z(s)|^{2-\alpha} \leq \frac{dw(s)}{ds} \quad \text{for all} \quad s>s_4.
\]
Then, in this case, we obtain the desired result: $\lim_{s\to\infty} w(s)= M>0$ and is finite.
 
Now, if $M=1/\sqrt{2\tau}$, there exists $s \in \R$ such that $w(s)<1/\sqrt{2\tau}$, because we are assuming that the limit of $w$ does not exist. If $M>1/\sqrt{2\tau}$, then by (\ref{wgoodBounds}), with $s>s_3$,
\[
\frac{dw(s)}{ds} \leq \frac{1}{\tau} - 2w(s)^2 \leq 0, \quad \text{as long as} \quad \frac{1}{\sqrt{2\tau}}\leq w(s) \leq M.
\] 
This means that on the intervals of $s$ for which $\frac{1}{\sqrt{2\tau}}\leq w(s) \leq M$, $w$ is not increasing, so $w$ cannot oscillate in this range. This implies that $w(s) \in (0,1/\sqrt{2\tau}]$, for all $s>s_4$ with $s_4>s_3$ large enough, oscillating without limit.

Then, there exists also an increasing sequence $\{\overline{s}_n\}_{n\geq 0}>s_4$, where local minima of $w$ are attained, with $dw(\overline{s}_n)/ds = 0$, $0<w(\overline{s}_n)\leq 1/\sqrt{2\tau}$ and $\overline{s}_n \to\infty$ as $n\to \infty$. Let the sequence $\{\delta_n\}_{n\geq 0}$ be defined by evaluating the right-hand side of (\ref{wgoodBounds}) at each $s=\overline{s}_n$,
\begin{equation*}
0<\delta_n:= \frac{1}{\tau}-2w^2(\overline{s}_n)<\frac{1}{\tau}.
\end{equation*}
Again there are two possibilities: either $\{\delta_n\}_{n\geq 0}$ is bounded from below by a positive constant $K$, or $\delta_n \to 0$ as $n\to \infty$. In the former case, we get from (\ref{wgoodBounds}) that for all $n$,
\[
 K - C|z(\overline{s}_n)|^{2-\alpha} \leq \frac{dw}{ds}(\overline{s}_n)=0.
\]
Applying (\ref{zto0}) as $n\to \infty$, we deduce that there exists $n_0\geq 0$ such that the left hand side is strictly positive
for all $n\geq n_0$, a contradiction.

If $\delta_n\to 0$ as $n\to \infty$, then $w(\overline{s}_n) \to 1/\sqrt{2\tau}$ as $n\to \infty$. This means that this sequence of local minima is converging to the supremum of $w$ on $s>s_4$, but this contradicts that $w$ oscillates without a limit and, moreover, this implies that $\lim_{s\to\infty} w(s)=1/\sqrt{2\tau}$.

With the limit of $w$ and taking into account its  regularity, we get that there exists positive constants $C_1$ and $C_2$ proportional to $1/\sqrt{\tau}$, such that for all $\xi>\xi_0$
\[
 C_1<- \frac{u'(\xi)}{u^2(\xi)} < C_2.
\]
Then, integrating over the interval $(\xi_0,\xi)$ for $\xi<\xi^*$, gives
\[
\frac{1}{C_2 (\xi - \xi_0) + 1/u(\xi_0)} < u(\xi)< \frac{1}{C_1 (\xi - \xi_0) + 1/u(\xi_0)}.
\]
These bounds imply (\ref{blow-down:bhv}), since $C_1$ and $C_2$ are proportional to $1/\sqrt{\tau}$, and with $\xi^* \leq \xi_0 - \frac{\sqrt{\tau}}{C u(\xi_0)}$, for some positive constant $C$.
\end{proof} 

Gathering the results of the previous lemmas we finish this section with:
\begin{proof}[Proof of (iii) of Theorem~\ref{existence}]
As explained earlier the proofs of (i), (ii) and the first part of (iii) follow from the results of \cite{ACH}. The proof of the last part of (iii) follows applying the previous lemmas. First, Lemma~\ref{noosci} rules out the oscillatory behaviour of $\phi$ and then Lemma~\ref{blow-down} ensures that (\ref{phi:blow-down}) is satisfied.
\end{proof}

\section{Existence of undercompressive waves: Proof of Theorem~\ref{TW:main:theorem}}\label{sec:main}
In the forthcoming, for every value of $\tau>0$ we will let $\phi_\tau(\xi)$ denote a solution of equation (\ref{TWP}) satisfying (\ref{far-fieldL}) as constructed in Theorem~\ref{existence}. According to the three possible behaviours of such trajectories, established in Theorem~\ref{existence} (iii), we define the following sets of $\tau$'s: 
\begin{df} For every $\tau> 0$ let $\phi_\tau$ be a solution as constructed in Theorem~\ref{existence}. Then we define the sets
  \begin{eqnarray*}
    \Sigma_u & := & \left\{ \tau> 0: \quad \lim_{\xi\to(\xi^*)^-} \phi_\tau(\xi)=-\infty \ \ \mbox{for some} \ \ \xi^*\in \R \right\}\,,
    \\
    \Sigma_c & := & \left\{ \tau> 0: \quad \lim_{\xi\to\infty}\phi_\tau(\xi)=\phi_c \right\}\,,
    \\
    \Sigma_+ & := & \left\{ \tau> 0: \quad \lim_{\xi\to\infty}\phi_\tau(\xi)=\phi_+ \right\}\,.
    \end{eqnarray*}
  \end{df}
By definition and uniqueness up to translation, these sets are disjoint. If the set $\Sigma_+$ is non-empty, then there exists a solution $\phi_\tau$ of the problem (\ref{TWP})-(\ref{far-fieldL}), such that $\lim_{\xi\to\infty}\phi_{\tau}(\xi)= \phi_+$, thus showing that will finish the proof of Theorem~\ref{TW:main:theorem}.

We prove this by a shooting argument, where $\tau$ is the shooting parameter. We divide this section into two. In the first part we show that $\Sigma_u$ is non-empty and open and in the second we show that $\Sigma_c$ is non-empty and then also that $\Sigma_+$ is non-empty. In this final part of the proof, we argue by contradiction; we assume that $\Sigma_+$ is empty, this means that $\Sigma_c$ is closed, and using continuity with respect to $\tau>0$, where we invoke Appendix~\ref{appendix:B}, we get to a contradiction.

We remark that we use Theorem~\ref{CDTh} of Appendix~\ref{appendix:B} for our problem, that is rewritten as (\ref{gen:delay:eq}) with (\ref{our:F1})-(\ref{our:F3}). This is valid on finite intervals, but, as we shall see, we can get continuity with respect to $\tau>0$ on intervals $(-\infty,\xi)$ using the results of \cite{ACH}.
 
We notice that as a main step to show that $\Sigma_c$ is non-empty we need to consider a modified problem where the non-linearity has only the zeros $\phi_-$ and $\phi_c$ and coincides with $h$ in that range. Then we can apply the monotonicity results of \cite{ACH} and a new result, Theorem~\ref{mono:tau:small:tail}, that guarantees that trajectories stay in a range where both non-linearities coincide.


\subsection{The set $\Sigma_u$}\label{sigma:u}
We first show that $\Sigma_u$ is non-empty:

\begin{lem}\label{unbounded}
Consider $\phi_-$ and $\phi_+$ satisfying \eqref{lin:ass} and (\ref{nec:cond0}). Let $\phi_\tau$ denote the unique (up to shifts in $\xi$) solution of (\ref{TWP}) satisfying (\ref{far-fieldL}) as constructed in Theorem~\ref{existence}.
Then, there exists $\tau_m>0$ such that for all $\tau>\tau_m$ there exists $\xi^*_\tau \in\R$ such that $\lim_{\xi\to(\xi^*_\tau)^-} \phi_\tau(\xi) =-\infty$.
\end{lem}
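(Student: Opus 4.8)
The plan is to show that for large $\tau$ the trajectory $\phi_\tau$ cannot remain bounded, so by Theorem~\ref{existence}(iii) it must blow down to $-\infty$ at a finite $\xi^*$. The natural strategy is energetic: use the identity (\ref{energy:form}) from Lemma~\ref{good:sign}, namely
\[
\frac{\tau}{2}(\phi_\tau'(\xi))^2 + \int_{-\infty}^\xi \phi_\tau'(y)\,\DD^\alpha[\phi_\tau](y)\,dy = H(\phi_\tau(\xi)) - H(\phi_-),
\]
together with the hypothesis (\ref{nec:cond0}), which via Lemma~\ref{nec:cond:farfield} is equivalent to $H(\phi_+) - H(\phi_-) > 0$; and crucially, since $\phi_c \in (\phi_+,\phi_-)$ and $H(\phi) > H(\phi_-)$ for all $\phi \neq \phi_-$, one also has $H(\phi_c) - H(\phi_-) > 0$. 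So whether $\phi_\tau$ limits to $\phi_+$ or to $\phi_c$, the right-hand side of the energy identity tends to a \emph{strictly positive} constant as $\xi\to\infty$, whereas in either of those cases $\phi_\tau'(\xi)\to 0$, forcing $\int_{-\infty}^\infty \phi_\tau'\DD^\alpha[\phi_\tau] = H(\phi_{\pm/c}) - H(\phi_-) > 0$ with a lower bound independent of $\tau$ (it depends only on $\phi_\pm$). The idea is then to contradict this for $\tau$ large by showing the dissipation integral is small when $\tau$ is large.

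Concretely, I would first fix, by translation invariance, a normalisation of $\phi_\tau$ — say $\phi_\tau(0) = (\phi_- + \phi_c)/2$, or more usefully pin down where $\phi_\tau$ crosses some fixed level strictly between $\phi_c$ and $\phi_-$ — so that the far-field decay to $\phi_-$ is controlled uniformly. Using the exponential bound from the linearisation at $\phi_-$ (the solution behaves like $be^{\lambda\xi}$ near $-\infty$, with $\lambda$ the positive root of $P(z) = \tau z^2 + z^\alpha - h'(\phi_-)$), one sees $\lambda \to 0$ as $\tau \to \infty$, so the tail spreads out; but on the bounded part of the profile I would estimate $\|\phi_\tau'\|_\infty$ directly from the energy identity: since $H(\phi_\tau) - H(\phi_-)$ stays bounded (say by the maximum of $H - H(\phi_-)$ on $[\phi_c,\phi_-]$, or on $[\phi_+,\phi_-]$, a $\tau$-independent constant $E_0$) as long as $\phi_\tau$ stays bounded, and the dissipation term is non-negative, we get $\frac{\tau}{2}(\phi_\tau'(\xi))^2 \leq E_0$, i.e. $\|\phi_\tau'\|_\infty \leq \sqrt{2E_0/\tau}$. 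Then, by Lemma~\ref{bound:Da}, $\|\DD^\alpha[\phi_\tau]\|_\infty \leq C_\alpha \|\phi_\tau\|_\infty^{1-\alpha}\|\phi_\tau'\|_\infty^\alpha \leq C_\alpha \max(|\phi_+|,|\phi_-|)^{1-\alpha} (2E_0/\tau)^{\alpha/2}$, which is $O(\tau^{-\alpha/2})$. Combining,
\[
\left|\int_{-\infty}^\infty \phi_\tau'(y)\,\DD^\alpha[\phi_\tau](y)\,dy\right| \leq \|\DD^\alpha[\phi_\tau]\|_\infty \int_{-\infty}^\infty |\phi_\tau'(y)|\,dy = \|\DD^\alpha[\phi_\tau]\|_\infty \cdot (\phi_- - \lim_{\xi\to\infty}\phi_\tau) \leq C\,\tau^{-\alpha/2},
\]
using monotonicity $\phi_\tau' < 0$ to evaluate the total variation as $\phi_- - \phi_+ \leq \phi_- - \phi_+$ (a $\tau$-independent constant). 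For $\tau > \tau_m$ with $\tau_m$ chosen so that $C\tau_m^{-\alpha/2} < \min\{H(\phi_+) - H(\phi_-),\, H(\phi_c) - H(\phi_-)\}$, this contradicts the positive lower bound on the dissipation integral derived above. Hence for $\tau > \tau_m$ the profile $\phi_\tau$ cannot be bounded, so by Theorem~\ref{existence}(iii) there is a finite $\xi^*_\tau$ with $\phi_\tau(\xi)\to -\infty$ as $\xi\to(\xi^*_\tau)^-$, as claimed.

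The main obstacle I anticipate is making the ``bounded part of the profile'' argument airtight: a priori we do not know $\phi_\tau$ is bounded, and the energy bound $\frac{\tau}{2}(\phi_\tau')^2 \leq H(\phi_\tau) - H(\phi_-)$ only controls $\phi_\tau'$ in terms of $\phi_\tau$, so one should run a bootstrap/continuity argument on the maximal interval where $\phi_\tau$ stays, say, above $2\phi_-$ (recall Lemma~\ref{h:H:bounds} and the machinery of Lemma~\ref{noosci} already quantify what happens once $\phi_\tau$ dips below $-\phi_-$): either $\phi_\tau$ eventually drops below that level, in which case the blow-down lemmas (Lemmas~\ref{noosci}, \ref{blow-down}) apply directly and we are done, or $\phi_\tau$ stays in $[2\phi_-,\phi_-)$ for all $\xi$, in which case it is bounded, must limit to $\phi_+$ or $\phi_c$, and the dissipation estimate above yields the contradiction. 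Either branch gives $\tau \in \Sigma_u$. A secondary technical point is justifying $\int_{-\infty}^\infty|\phi_\tau'| = \phi_- - \lim\phi_\tau$, which uses the monotonicity $\phi_\tau' < 0$ from Theorem~\ref{existence}(i) and the existence of the limits; and checking that the constant $E_0$ really is $\tau$-independent (it is, being a sup of $H - H(\phi_-)$ over a fixed compact interval determined by $\phi_\pm$ alone).
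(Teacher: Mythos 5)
Your overall skeleton is the same as the paper's: assume $\phi_\tau$ is globally defined for some large $\tau$, note that then $\lim_{\xi\to\infty}\phi_\tau=\phi^*\in\{\phi_+,\phi_c\}$ so the right-hand side of (\ref{energy:form}) tends to a positive constant independent of $\tau$, and contradict this by showing the dissipation integral is $O(\tau^{-\alpha/2})$. However, two steps you rely on are exactly the nontrivial content of the paper's proof and are not justified in your proposal. First, your dichotomy is broken: if $\phi_\tau$ ``drops below the level'' (you presumably mean a level such as $-\phi_-$; as written $[2\phi_-,\phi_-)$ is empty since $\phi_->0$), Lemmas~\ref{noosci} and~\ref{blow-down} do \emph{not} apply --- their hypothesis is that the continuation becomes unbounded, not that it crosses a fixed level; a priori a bounded profile can dip below $-\phi_-$ and return to $\phi_c$ or $\phi_+$ (recall $\phi_+>-\phi_-$ by (\ref{nec:cond0})). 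Consequently, on that branch your $\tau$-independent constant $E_0$, and hence the bound $\|\DD^\alpha[\phi_\tau]\|_\infty\lesssim \tau^{-\alpha/2}$ via Lemma~\ref{bound:Da}, are not available. The paper closes this by working at an interior minimum $\xi_{min}$ with $\phi_\tau(\xi_{min})<-\phi_-$: using $\phi_\tau''(\xi_{min})\geq 0$ in (\ref{TWP}) together with Lemma~\ref{bound:Da}, Lemma~\ref{h:H:bounds} and the energy bound, it derives $\|\phi_\tau\|_\infty^{2-\alpha}\leq C\tau^{-\alpha/2}$, impossible for large $\tau$ since $\|\phi_\tau\|_\infty\geq\phi_-$; only then does one know $\|\phi_\tau\|_\infty=\phi_-$ for bounded profiles with $\tau$ large.

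Second, your evaluation $\int_\R|\phi_\tau'|\,dy=\phi_--\lim_{\xi\to\infty}\phi_\tau$ uses monotonicity of $\phi_\tau$ on all of $\R$, but Theorem~\ref{existence}(i) gives $\phi_\tau'<0$ only on $(-\infty,0)$, i.e.\ near the left state; globally the profile may oscillate, and then the total variation is not controlled by $\phi_--\phi^*$, so your estimate $\bigl|\int_\R\phi_\tau'\,\DD^\alpha[\phi_\tau]\bigr|\leq C\tau^{-\alpha/2}$ does not follow. This is precisely why the bulk of the paper's proof is devoted to establishing monotone decrease for large $\tau$: after rescaling $\xi=\sqrt{\tau}X$ it shows the profile decreases at least until it passes below $0$, then rules out a first local minimum $X_{min}$ by evaluating the rescaled energy identity there, where monotonicity up to $X_{min}$ gives the total-variation bound and hence a dissipation bounded by $2C\tau^{-\alpha/2}\phi_-$, against the fixed lower bound $H(\phi_+)-H(\phi_-)>0$. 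Only after monotonicity is secured does your final step (letting $\xi\to\infty$ in the energy identity) deliver the contradiction. So the strategy is right, but both branches of your bootstrap contain a genuine gap, and filling them requires essentially the interior-minimum estimate and the rescaled monotonicity argument of the paper.
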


\begin{proof}
Let us argue by contradiction. Assume that for all $\tau_0 > 0$ there exists at least one $\tau>\tau_0$ such that $\phi_\tau(\xi)$ is defined for all $\xi\in\R$. 
By Theorem~\ref{existence}, we know that $\phi_\tau$ is smooth, $\phi_\tau(\xi) <\phi_-$ for all $\xi\in\R$, and $\lim_{\xi\to\infty}\phi_\tau(\xi)=\phi^\ast\in\{\phi_c,\phi_+\}$. Moreover,
\begin{equation}\label{lin:ass:2}
  -\phi_- <\phi_+ <\phi_c <0
\end{equation}
due to (\ref{nec:cond0}), see also Lemma~\ref{nec:cond:farfield}.

  First, we prove that $\|\phi_\tau\|_\infty =\phi_-$ and deduce estimates on $\|\phi'_\tau\|_\infty$ and $\|\DD^\alpha[\phi_\tau]\|_\infty$.

At this point, one has just a lower bound for $\|\phi_\tau\|_\infty\geq\phi_-$.
Then, we distinguish two cases:
Either $\inf_{\xi\in\R} \phi_\tau(\xi)=\phi^*$ or there exists a value $\xi_{min}\in \R$ such that $\phi_\tau(\xi_{min})=\min_{\xi\in\mathbb R} \phi_\tau(\xi) =:\phi_{min}$. 

First, if $\inf_{\xi\in\R} \phi_\tau(\xi)=\phi^*$ then $\|\phi_\tau\|_{\infty}\leq \max\{|\phi^*|,\phi_-\}=\phi_-$ due to (\ref{lin:ass:2}), and this implies $\|\phi_\tau\|_\infty =\phi_-$ in this case.

In the other case, let $\xi_{min}\in\R$ be the point at which the minimum of $\phi_\tau$ is attained. It is enough to prove that $\phi_\tau(\xi_{min})=:\phi_{min}\in (-\phi_-,\phi_c)$, since this implies that $\|\phi_\tau\|_\infty\leq\phi_-$, due to (\ref{lin:ass:2}).

We argue by contradiction and assume to the contrary that $\phi_{min}<-\phi_-$. 
Now, using that $\phi_\tau''(\xi_{min}) \geq 0$ in (\ref{TWP}) yields  
\begin{equation}\label{TWE:xi:min}
0> h(\phi_{min})=\tau \phi_{\tau}''(\xi_{min})+\DD^\alpha [\phi_\tau](\xi_{min}) \geq \DD^\alpha [\phi_\tau] (\xi_{min}) ,
\end{equation}
and, by Lemma~\ref{bound:Da}, there exists $C_\alpha>0$ (independent of $\tau$) such that
\begin{equation}\label{bdd:Da:phi:phiprime}
|\DD^\alpha [\phi_\tau](\xi)|\leq C_\alpha \| \phi_\tau\|_\infty^{1-\alpha}\,\|\phi_\tau'\|_\infty^{\alpha} \qquad \text{for all } \xi\in\R.
\end{equation}
Then, combining (\ref{TWE:xi:min}) and (\ref{bdd:Da:phi:phiprime}), we conclude that
\begin{equation}\label{bddDalpha}
0 > \DD^\alpha[\phi_\tau](\xi_{min}) \geq  -  C_\alpha \| \phi_\tau\|_\infty^{1-\alpha}\,\|\phi'_\tau\|_\infty^{\alpha}. 
\end{equation}
Now, Lemma~\ref{h:H:bounds} implies that there exists a constant $C_h>0$, depending only on $\phi_-$ and $\phi_+$, such that 
\begin{equation}\label{bddh}
  \DD^\alpha[\phi_\tau](\xi_{min})\leq h(\phi_{min}) < C_h \phi_{min}^3 = -C_h\|\phi_\tau\|_\infty^3 <0.
\end{equation}
Combining (\ref{bddDalpha}) and (\ref{bddh}), then gives 
\begin{equation}\label{phi:bdd:by:phiprime}
\|\phi_\tau\|_\infty^{2+\alpha } \leq \frac{C_\alpha}{C_h}\,\|\phi'_\tau\|_\infty^{\alpha}
\end{equation}
where the constants $C_\alpha$ and $C_h$ depend on $\alpha$, $\phi_-$ and $\phi_+$ but are independent of $\tau$.

On the other hand, Lemmas~\ref{good:sign} and~\ref{h:H:bounds} imply that
\[
\frac{\tau}{2} (\phi_\tau'(\xi))^2
 \leq H(\phi_\tau(\xi))-H(\phi_-)
 \leq 2 \|\phi_\tau\|_\infty^4
 \qquad \text{for all } \xi\in\R,
\]
and taking the supremum with respect to $\xi\in\R$ yields
\begin{equation}\label{phiprime:bdd:by:phi}
\frac{\tau}{2} \|\phi'_\tau\|_\infty^2 \leq  2\|\phi_\tau\|_\infty^4.
\end{equation}

Finally, with (\ref{phi:bdd:by:phiprime}) and (\ref{phiprime:bdd:by:phi}), we obtain
\begin{equation}\label{phi_tau:upper:lower:bounds}
\phi_-^{2-\alpha} < \|\phi_\tau\|_\infty^{2-\alpha } < \tau^{-\alpha/2} C,
\end{equation}
with $C =2^\alpha {C_\alpha}/{C_h}>0$, which is independent of $\tau$.
Our assumption $\phi(\xi_{min}) < -\phi_-$ implies that the inequalities in (\ref{phi_tau:upper:lower:bounds}) are strict, then, necessarily $\tau < C^{2/\alpha} \phi_-^{2-4/\alpha}$, if this holds. That means that for $\tau > \tau_\alpha := C^{2/\alpha} \phi_-^{2-4/\alpha}$ the bounded solution~$\phi_\tau$ satisfies $\|\phi_\tau\|_\infty = \phi_-$.

Observe that the estimates (\ref{bdd:Da:phi:phiprime}) and (\ref{phiprime:bdd:by:phi}) are valid in both cases considered above, then substituting $\|\phi_\tau\|_\infty = \phi_-$ in them we obtain:
\begin{equation}\label{bounds:phiprime:Da}
\|\phi_\tau'\|_\infty\leq \tau^{-\frac{1}{2}} 2\phi_-^2, \qquad\text{and}\quad \|\DD^\alpha[\phi_\tau]\|_\infty \leq \tau^{-\frac{\alpha}{2}} C_\alpha 2^{\alpha} \phi_-^{\alpha+1}.
\end{equation}
We shall use these estimates below.

In order to finish the proof, we have to get a contradiction with the assumption that there are such bounded solutions if $\tau$ is large enough. For the argument, we rescale the variables as follows
\[
\xi= \sqrt{\tau} X\quad \mbox{and} \quad \phi_\tau(\xi) = \Psi_\tau(X)
\]
such that (\ref{TWP}) reads
\begin{equation}\label{resc:TWe}
\frac{d^2}{dX^2}\Psi_\tau + \tau^{-\frac{\alpha}{2}} \DD_X^\alpha[\Psi_\tau] = h(\Psi_\tau). 
\end{equation}
Then the estimates~\eqref{bounds:phiprime:Da} induce the uniform bounds:
\begin{equation}\label{UB:Phi}
 \exists C>0 \text{ (independent of $\tau$)}: \quad \|\DD_X^\alpha[\Psi_\tau]\|_\infty <C, \quad \|\Psi'_\tau\|_\infty <C.
\end{equation}
Due to Theorem~\ref{existence}(i) and its proof, for sufficiently small $\e>0$ there exists $X_0\in \R$ such that $\Psi_\tau(X_0) =\phi_- -\e$ and $\Psi'_\tau(X_0)<0$ with $\phi_- -\e>\phi_m$ where $\phi_m\in(\phi_c,\phi_-)$ such that $h'(\phi_m)=0$. Let $X_1\in\R$ such that $\Psi_\tau(X_1)\in(\phi_c,\phi_m)$ and $h(\Psi_\tau(X_1)) =h(\Psi_\tau(X_0))$. Choosing $\e$ even smaller, we can ensure that $\phi_c<\Psi(X_1)<0<\phi_m$. Then, integrating (\ref{resc:TWe}) over the interval $(X_0,X)$ and using~\eqref{UB:Phi} yields
\[
\Psi'_\tau(X)
 \leq \Psi'_\tau(X_0) + \tau^{-\alpha/2} C(X-X_0) + \int_{X_0}^{X}{h(\Psi_\tau(Y)) \,dY}.
\]
For all $X\in(X_0,X_1)$, we deduce $h(\Psi_\tau(X)) \leq h(\Psi_\tau(X_0)) =h(\phi_- -\e) <0$ and 
\[
\Psi'_\tau(X)
 \leq \Psi'_\tau(X_0) + \big\{\tau^{-\alpha/2} C +h(\phi_- -\e)\big\} (X-X_0).
\]
Choosing $\tau_0>0$ sufficiently large, such that the associated $\tau>\tau_0$ satisfies $\tau^{-\alpha/2} < \frac{|h(\phi_- -\e)|}{2C}$, implies that
$\Psi'_\tau(X)<0$ for all $X\in(X_0,X_1)$. And therefore, also,  $\Psi_\tau$ decreases monotonically for all $X\in(-\infty,X_1)$ with $\phi_c<\Psi_\tau(X_1)<0$.

If $\Psi_\tau$ is not monotone for all $X\in\R$ then it attains its first local minimum at some $X_{min}>X_1$.

We now evaluate the energy estimate (\ref{energy:form}), rescaled as for (\ref{resc:TWe}), at $X_{min}$ and using the bound (\ref{UB:Phi}) for $\DD^\alpha_X[\cdot]$ yields:
\[
\begin{split}
0\leq H(\Psi_\tau(X_{min})) - H(\phi_-)
 &= \tau^{-\alpha/2} \int_{-\infty}^{X_{min}}\Psi'_\tau(Y)\DD^\alpha_Y[\Psi_\tau] \,dY \\
 &< \tau^{-\alpha/2} C \int_{-\infty}^{X_{min}}|\Psi'_\tau(Y)| \,dY.
\end{split}
\]
Using that $\Psi_\tau$ is decreasing in $(-\infty,X_{min})$ and that $\|\Psi_\tau\|_\infty =\phi_-$, implies
\begin{equation}\label{energy:tau:large}
\begin{split}
0\leq H(\Psi_\tau(X_{min})) - H(\phi_-)
 &< \tau^{-\alpha/2} C \int_{-\infty}^{X_{min}}|\Psi'_\tau(Y)| \,dY \\
 &= \tau^{-\alpha/2} C(\phi_- - \Psi_\tau(X_{min})) < 2 \tau^{-\alpha/2} C \phi_-.
\end{split}
\end{equation}
Observe that, $H(\Psi_\tau(X_{min})) - H(\phi_-) \geq H(\phi_+) - H(\phi_-)  >0$, since $\Psi_\tau(X_{min}) \leq \Psi_\tau(X_1) <0$ and by Lemma~\ref{nec:cond:farfield} (indeed, $H(\phi)-H(\phi_-)$ has two local minima, one at $\phi_-$ which is zero, and the other at $\phi_+$, which is strictly positive; at $\phi_c$ it attains a local maximum). On the other hand, the upper bound in (\ref{energy:tau:large}) can be made arbitrarily small by choosing $\tau_0$ sufficiently large. This gives a contradiction, thus $\Psi_\tau$ does not attain a minimum and decreases for all $X\in\R$. 

We have thus concluded that the bounded solution $\Psi_\tau$ converges either to $\phi_+$ or $\phi_c$ in a monotonically decreasing way. We can use the previous argument again and take the limit $X_{min} \to \infty$ in the energy estimate, this gives
\[
0
< H(\phi^\ast)- H(\phi_-) 
< \tau^{-\alpha/2} C (\phi_- - \phi^\ast)
< 2 \tau^{-\alpha/2} C \phi_-.
\] 
However, $0 <H(\phi_+) -H(\phi_-)\leq H(\phi_c) -H(\phi_-)$ is a fixed positive number whereas the upper bound can be made arbitrarily small by choosing $\tau_0$ sufficiently large. This gives again the contradiction, and so there cannot exist such bounded solutions if $\tau$ is large enough.

\end{proof}

\begin{lem}\label{sigma_u_open}
$\Sigma_u$ is an open set.
\end{lem}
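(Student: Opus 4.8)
The plan is to show that $\Sigma_u$ is open by proving that if $\tau_0 \in \Sigma_u$, then every $\tau$ in a small neighbourhood of $\tau_0$ also belongs to $\Sigma_u$. The starting point is that $\tau_0 \in \Sigma_u$ means the solution $\phi_{\tau_0}$ blows down to $-\infty$ at a finite value $\xi^*_{\tau_0}$, and by Lemma~\ref{blow-down} this happens at a definite finite rate. In particular, we can fix a value $\xi_1 < \xi^*_{\tau_0}$ at which $\phi_{\tau_0}(\xi_1)$ is already very negative — say $\phi_{\tau_0}(\xi_1) < -2\phi_-$ — and at which $\phi_{\tau_0}'(\xi_1) < 0$ with $\phi_{\tau_0}''(\xi_1)$ controlled. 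The idea is that once a trajectory has entered the region $u < -\phi_-$ with sufficiently negative value and negative derivative, the dynamics of Lemma~\ref{blow-down} (the $z,w$ analysis, or more directly the differential inequality $v' = \tau^{-1}(h(u) - \DD^\alpha[u]) \le u^3(C_h/\tau - 2^\alpha C_\alpha \tau^{-1-\alpha/2}|u|^{-(2-\alpha)})$) force it to continue to $-\infty$ in finite $\xi$; there is no escape back to boundedness. So "being in $\Sigma_u$" is really detected at a \emph{finite} $\xi_1$ by a finite amount of data, and that is where continuous dependence enters.

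The key steps, in order: First, using Theorem~\ref{existence} and Lemma~\ref{blow-down}, fix $\tau_0 \in \Sigma_u$ and choose $\xi_1$ with $\phi_{\tau_0}(\xi_1) < -2\phi_-$, $\phi_{\tau_0}'(\xi_1) < 0$ (normalising the shift so that, say, $\phi_{\tau_0}(0) = $ some fixed value $< \phi_-$ close to $\phi_-$, so the solutions are genuinely comparable). Second, invoke the continuous dependence result of Appendix~\ref{appendix:B} (Theorem~\ref{CDTh}), applied to the formulation of (\ref{TWP}) as the delay integro-differential equation (\ref{gen:delay:eq}): the solution and its first two derivatives depend continuously on $\tau$, uniformly on the compact interval $(-\infty,\xi_1]$ — here one uses, as the excerpt notes, the results of \cite{ACH} to get continuity on the half-line $(-\infty,\xi_1]$ rather than merely on a finite interval, since the tail behaviour near $-\infty$ is uniformly controlled by the linearisation about $\phi_-$. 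Consequently, there is $\delta > 0$ such that for $|\tau - \tau_0| < \delta$ we still have $\phi_\tau(\xi_1) < -2\phi_-$ and $\phi_\tau'(\xi_1) < 0$. Third, show this "entrapment" condition is self-sustaining: once $\phi_\tau(\xi_1) < -2\phi_-$ and $\phi_\tau'(\xi_1) < 0$, the solution cannot return to the region $u \ge -\phi_-$, because by Lemma~\ref{good:sign} it stays below $\phi_-$ and by Theorem~\ref{existence}(iii) the only bounded alternative is convergence to $\phi_+$ or $\phi_c$, both of which lie in $(-\phi_-, 0)$ — yet to reach that region the trajectory would have to cross $u = -\phi_-$ with $u' \ge 0$ somewhere, and at such a crossing point $\xi_2$ the argument of Lemma~\ref{blow-down}/Lemma~\ref{noosci} type estimates (using $h(u) < 0$, $u'' \ge 0$, hence $\DD^\alpha[u](\xi_2) < 0$, combined with the lower bound from Lemma~\ref{bound:Da}) produce a contradiction of the same flavour as (\ref{norm1})--(\ref{norm2}) once $|u|$ is large enough; more cleanly, one shows directly that $\phi_\tau$ stays below $-2\phi_-$ and keeps decreasing past $\xi_1$, so it is unbounded, hence by Theorem~\ref{existence}(iii) and Lemma~\ref{blow-down} it is in $\Sigma_u$. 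Therefore $(\tau_0 - \delta, \tau_0 + \delta) \subset \Sigma_u$.

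The main obstacle, and the step to handle with care, is the interplay between continuous dependence and the fact that the domain is the half-line $(-\infty, \xi_1]$: Theorem~\ref{CDTh} as quoted is stated for finite intervals, so one must explicitly combine it with the uniform-in-$\tau$ control of the tail as $\xi \to -\infty$ coming from the exponential decay $\phi_\tau(\xi) - \phi_- \sim b\, e^{\lambda(\tau)\xi}$ (with $\lambda(\tau)$ the positive root of (\ref{pol:left}), depending continuously on $\tau$) established in \cite{ACH}; this is what lets us treat the non-local term $\DD^\alpha[\phi_\tau]$ — which integrates all the way to $-\infty$ — as continuous in $\tau$ uniformly on $(-\infty,\xi_1]$. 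A secondary technical point is the bookkeeping of the shift: since solutions are only unique up to translation, one must pin down the translate consistently across $\tau$ (e.g. by a normalisation such as $\phi_\tau(0) = \phi_- - \e_0$ for a fixed small $\e_0$) before comparing $\phi_\tau$ with $\phi_{\tau_0}$ at $\xi_1$. Once these are set up, the "no return" argument is a short compactness/sign argument and the conclusion follows.
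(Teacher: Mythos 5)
Your overall skeleton (detect membership in $\Sigma_u$ at a finite $\xi_1$ where $\phi_{\tau_0}$ is already very negative, transfer that to nearby $\tau$ by continuous dependence, then argue that such a solution cannot be bounded) is the same as the paper's, but the decisive quantitative step is not justified as you state it. Your ``entrapment'' claim uses the fixed, $\tau$-independent threshold $-2\phi_-$: you assert that once $\phi_\tau(\xi_1)<-2\phi_-$ with $\phi_\tau'(\xi_1)<0$ the solution can never return and must blow down. None of the lemmas you invoke gives this. Lemma~\ref{noosci} and Lemma~\ref{blow-down} both \emph{assume} the solution is unbounded; they do not provide a trapping region at any fixed level. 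The local-minimum argument of the type (\ref{norm1})--(\ref{norm2}) (note also that your ``crossing point with $u'\ge 0$'' is not the right point to evaluate the equation at -- you need a local minimum, where $\phi'=0$, $\phi''\ge 0$) only yields the a priori bound (\ref{phi_tau:upper:lower:bounds}): a \emph{bounded} solution whose minimum dips below $-\phi_-$ satisfies $\|\phi_\tau\|_\infty\leq(\tau^{-\alpha/2}C)^{1/(2-\alpha)}$ with $C=2^\alpha C_\alpha/C_h$. Since this bound is large for small and moderate $\tau$, nothing excludes a bounded solution (converging to $\phi_c$ or $\phi_+$) that dips well below $-2\phi_-$ and comes back up; so reaching $-2\phi_-$ with negative slope does not certify membership in $\Sigma_u$, and your ``no return'' step is a genuine gap. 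The same objection applies to the differential inequality you quote for $v'$: it is only coercive once $|u|^{2-\alpha}\gtrsim \tau^{-\alpha/2}$, i.e.\ beyond the same $\tau$-dependent level, not beyond $2\phi_-$.

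The paper closes exactly this gap by making the threshold $\tau$-dependent: since $\phi_{\tau_0}(\xi)\to-\infty$ as $\xi\to(\xi^*_{\tau_0})^-$, one can choose the comparison window where $\phi_{\tau_0}$ lies below $-\bigl(\max\{\phi_-,((\tau_0-\delta)^{-\alpha/2}C)^{\frac{1}{2-\alpha}}\}+\e\bigr)$; continuous dependence (on a bounded interval, after the $(-\infty,\xi]$ tail is handled via the exponential behaviour near $\phi_-$ from \cite{ACH}, as you correctly note) then forces $|\phi_\tau|$ to exceed $((\tau_0-\delta)^{-\alpha/2}C)^{\frac{1}{2-\alpha}}$ there, which directly contradicts (\ref{phi_tau:upper:lower:bounds}) if $\phi_\tau$ were bounded, since $\tau>\tau_0-\delta$. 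If you replace your fixed level $-2\phi_-$ by this $\tau$-dependent level (and phrase the contradiction through the a priori sup-norm bound for bounded solutions rather than through a dynamical ``keeps decreasing'' claim), your argument becomes the paper's proof; as written, the key step is unsupported.
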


\begin{proof}
By Lemma~\ref{unbounded}, there exists a value $\tau_m>0$ such that $(\tau_m,+\infty) \subset \Sigma_u$, thus such points are inner points of $\Sigma_u$. Then, it remains to prove that if the intersection $(0,\tau_m]\cap\Sigma_u$ is non-empty then is an open set.

Suppose $\tau_0 \in (0,\tau_m]\cap \Sigma_u$, in particular $\lim_{\xi \to (\xi^*_{\tau_0})^-}{\phi_{\tau_0}}(\xi) =-\infty$ for some $\xi^*_{\tau_0}$. We next prove that there exists $\delta>0$ such that for all $\tau \in (\tau_0-\delta, \tau_0 +\delta)$, the solution $\phi_\tau$ of \eqref{TWP} and \eqref{far-fieldL} satisfies $\lim_{\xi \to (\xi^*_\tau)^-}{\phi_\tau}=-\infty$ for some $\xi^*_\tau\in\R$.

  We use continuous dependence on the parameter $\tau$ on finite intervals (see Appendix~\ref{appendix:B}). Given a bounded interval $I$ such that $\phi_{\tau_0}(\xi)< -\phi_-$ for all $\xi\in I$, then for all $\e>0$ there exists $\delta>0$ such that
\begin{equation}\label{continuous_dependence_tau}
\left| \phi_{\tau_0}(\xi) - \phi_\tau(\xi) \right| <\e, \quad \text{for } \ \xi \in I \ \text{ and } \ \tau \in (\tau_0-\delta, \tau_0+\delta). 
\end{equation}

Let $C=2^\alpha {C_\alpha}/{C_h}$, as in the proof of Lemma~\ref{unbounded}, and $\e>0$ fixed. We take
\[
\xi \geq \xi_{sup}:=\sup\left\{ \xi \in (-\infty, \xi^*_{\tau_0}): \ \phi_{\tau_0}(\xi) = -(\max\{ \phi_-, (\tau_0^{-\frac{\alpha}2} C)^{\frac{1}{2-\alpha}} \}+\e)  \right\}
\]
and choose a bounded interval $I$ by means of,
\[
\begin{split}
I:=\left\{ \xi \in (\xi_{sup}, \xi^*_{\tau_0}) : 
 -(2\max\{\phi_-, (\tau_0^{-\frac{\alpha}2} C)^{\frac{1}{2-\alpha}}\}+\e) <\phi_{\tau_0}(\xi) \right.\\
 \left.<-(\max\{ \phi_-, (\tau_0^{-\frac{\alpha}2} C)^{\frac{1}{2-\alpha}} \}+\e)  \right\}.
\end{split}
\]
Thus, there exists $\delta>0$ such that (\ref{continuous_dependence_tau}) holds. Then, there we take (a maybe smaller value than the previous) $\delta$ such that $\delta<(1-2^{-2/\alpha})\tau_0$, and we define a sub-interval $J\subseteq I$ such that 
\[
 \phi_{\tau_0}(\xi)
 <-(\max\{ \phi_-, ((\tau_0 -\delta)^{-\frac{\alpha}2} C)^{\frac{1}{2-\alpha}} \}+\e)
 \qquad 
 \text{for all $\xi\in J$}.
 \]
Then, since $|\phi_{\tau_0}(\xi)| - |\phi_{\tau}(\xi)| < \e$ by (\ref{continuous_dependence_tau}),
\begin{equation}\label{bound:phi:tau}
  |\phi_{\tau}(\xi)| > |\phi_{\tau_0}(\xi)| - \e >\max\{\phi_-,((\tau_0 -\delta)^{-\alpha/2} C)^{\frac{1}{2-\alpha}}\}
\end{equation}
for all $\xi \in J$ and $\tau \in (\tau_0-\delta, \tau_0+\delta)$ for $\delta$ sufficiently small. 

Let us now argue by contradiction and suppose that there exists some $\tau \in (\tau_0-\delta, \tau_0+\delta)$ such that $\tau \notin \Sigma_u$. Then this means that $\phi_\tau$ is bounded and one can apply the first part of the proof of Lemma~\ref{unbounded}, see (\ref{phi_tau:upper:lower:bounds}), and get,
\[
\|\phi_\tau\|_\infty \leq (\tau^{-\alpha/2} C)^{\frac{1}{2-\alpha}},
\]
where $C$ as above. We recall that to get to this inequality the starting assumption is that $\phi_\tau$ has its minimum in the interval $(-\infty,-\phi_-)$. Now, the inequality contradicts (\ref{bound:phi:tau}), since $\tau_0 -\delta<\tau$.
Therefore, $(\tau_0-\delta, \tau_0+\delta) \subset \Sigma_u$ and we have that $\Sigma_u$ is open with the usual topology in $(0,+\infty)$.
\end{proof}


\subsection{The sets $\Sigma_c$ and $\Sigma_+$: end of the proof}\label{sigma:c:plus}
In this section we finish the proof of Theorem~\ref{TW:main:theorem}.


Before we state the result that we need, which is proved in Section~\ref{sec:control}, let us introduce the following modified problem:
\begin{equation}\label{TWP:modified}
 \tau \phi_\tau''+ \DD^\alpha[ \phi_\tau]=\tilde{h}(\phi_\tau)\,,
\end{equation}
with
\begin{equation}\label{h:tilde}
  \tilde{h}(\phi) :=\begin{cases}
  h(\phi), & \mbox{if}\ \phi\geq -\sqrt{c/3}\,,
  \\
  P_c(\phi), & \mbox{if} \ \phi\leq -\sqrt{c/3}\,,
  \end{cases}
\end{equation}
where $P_c(\phi)$ is a function such that $P_c(-\sqrt{c/3})=h(-\sqrt{c/3})$, $P_c'(-\sqrt{c/3})=h'(-\sqrt{c/3})=0$ and $P_c''(-\sqrt{c/3})=h''(-\sqrt{c/3})$, and such that $P_c(\phi)>0$ for all $\phi\leq -\sqrt{c/3}$\footnote{ 
For example, we can choose 
\[
P_c(\phi)=A\phi^4+B\phi^3+C\phi^2+D\phi+E 
\]
such that $A>0$ and the rest of coefficients are chosen such that, $P_c(-\sqrt{c/3})=h(-\sqrt{c/3})$, $P_c'(-\sqrt{c/3})=h'(-\sqrt{c/3})=0$ and $P_c''(-\sqrt{c/3})=h''(-\sqrt{c/3})<-6\sqrt{c/3}<0$ (these give $C$ and $D$ as a linear combination of $A$ and $B$), and such that the local minimum at some $\phi_{min}<-\sqrt{c/3}$ (this gives a linear relation for $A$ and $B$, and choosing $B$ very negative guarantees that $A$ is positive) has $P_c(\phi_{min})>0$ (this is achieved by taking $E>0$ as large as necessary). This last condition guarantees that $\tilde{h}(\phi)>0$ for all $\phi\leq -\sqrt{c/3}$. 
}. Observe that at $\phi=-\sqrt{c/3}$, $h$ attains its local maximum and that $-\sqrt{c/3}<\phi_c<0$, thus this modification of $h$ is $C^2$ at the point $-\sqrt{c/3}$.

  We observe that we also denote by $\phi_\tau$ a solution to this problem (for simplicity of notation), but it will be made clear to which problem such profile corresponds.

We define, analogously to (\ref{energy}), the primitive of $\tilde{h}$, $\widetilde{H}(\phi)=\int_0^\phi \tilde{h}(y)dy$. We observe that $\tilde h$ has only two zeros, namely, $\phi_c$ and $\phi_-$, and the necessary condition, see Lemma~\ref{good:sign} (\ref{energy:form}),
\begin{equation}\label{H:tilde:conditon}
0 \leq \widetilde H(\phi) -\widetilde H(\phi_-)
= \int_{\phi_-}^{\phi} \tilde h(y) \,dy
\end{equation}
holds only for $\phi\in[\bar\phi,\phi_-]$, where $\bar{\phi}$ is the zero of $\widetilde H(\phi) -\widetilde H(\phi_-)$ that satisfies $\bar\phi<\phi_c$.
One can then adapt the results of \cite{ACH} to this equation subject to the far-field behaviour
 \begin{equation} \label{TW:m-c}
   \lim_{\xi\to-\infty} \phi_\tau(\xi) = \phi_- \quad\text{and}\quad 
   \lim_{\xi\to+\infty} \phi_\tau(\xi) = \phi_c.	
 \end{equation}
 In fact, solutions to this problem exist for all $\tau>0$ and lie in the interval $(\bar\phi,\phi_-)$. We also recall that $\phi_-$ and $\phi_c$ satisfy the Rankine-Hugoniot condition, giving the same wave speed $c$ as in (\ref{RHC}), because $\phi_c$ is also a root of $h$.

We shall denote by $\phi_0$ the solutions of the equation with $\tau=0$:
\begin{equation}\label{TWP:zero}
  \DD^\alpha[ \phi_0]=\tilde{h}(\phi_0)\,,
\end{equation}
where $\tilde{h}$ is given in (\ref{h:tilde}) such that
 \begin{equation} \label{TW:m-c0}
   \lim_{\xi\to-\infty} \phi_0(\xi) = \phi_- \quad\text{and}\quad 
   \lim_{\xi\to+\infty} \phi_0(\xi) = \phi_c.	
 \end{equation}
These solutions are constructed as in \cite{AHS} (see also \cite{AHS2}) and are monotone decreasing.

With regard to the monotonicity analysis, we recall the following result of \cite{ACH}, that concludes monotonicity for $\tau$ small enough in a large interval, by comparing the solutions of (\ref{TWP:modified})-(\ref{TW:m-c}) for $\tau>0$ small to the solution of (\ref{TWP:zero})-(\ref{TW:m-c0}):
\begin{thm}[{\cite[Theorem 9]{ACH}}]\label{mono:tau:small}
  Let $\phi_\tau$ be a solution of (\ref{TWP:modified})-(\ref{TW:m-c}). If $\tau$ is sufficiently small, then there exists an interval $I_\tau=(-\infty,\xi_\tau]$ with $\xi_\tau=O(\tau^{-\frac{1}{2-\alpha}})$ as $\tau\to 0^+$,
 and a value $\xi=\xi_\tau^0<\xi_\tau$ such that $\phi_\tau(\xi_\tau^0)=\phi_0(\xi_\tau^0)$, moreover, $|\phi_\tau(\xi)-\phi_0(\xi)|\leq \tau C$
 and $|\phi_\tau^\prime(\xi)-\phi_0^\prime(\xi)|\leq \tau^{1/(2-\alpha)} C$ for all $\xi\in I_\tau$. Thus for $\tau$ sufficiently small, $\phi_{\tau}$ is also monotone decreasing in $I_\tau$.
\end{thm}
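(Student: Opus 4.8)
The plan is to reconstruct the proof of Theorem~\ref{mono:tau:small} by importing the corresponding argument from \cite[Theorem~9]{ACH}, which was established there for the quadratic flux but only uses structural features that the modified nonlinearity $\tilde h$ also enjoys: $\tilde h\in C^2$, $\tilde h$ has exactly the two zeros $\phi_-$ and $\phi_c$ with $\tilde h'(\phi_-)>0$, $\tilde h'(\phi_c)<0$, the sign condition $\tilde h>0$ on $(\bar\phi,\phi_c)$ and $\tilde h<0$ on $(\phi_c,\phi_-)$, and the energy/coercivity estimate (\ref{H:tilde:conditon}) of Lemma~\ref{good:sign}. So the first step is simply to record that, with $\phi_c$ playing the role that $\phi_+$ plays in \cite{ACH} and with the primitive $\widetilde H$ replacing $H$, the hypotheses of \cite[Theorem~9]{ACH} are met for (\ref{TWP:modified})--(\ref{TW:m-c}) and for the limit profile $\phi_0$ solving (\ref{TWP:zero})--(\ref{TW:m-c0}) constructed as in \cite{AHS}.

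The core of the argument is a comparison on a long left half-line. First I would use the rescaling $\xi=\sqrt\tau X$, $\phi_\tau(\xi)=\Psi_\tau(X)$, under which (\ref{TWP:modified}) becomes $\Psi_\tau''+\tau^{-\alpha/2}\DD_X^\alpha[\Psi_\tau]=\tilde h(\Psi_\tau)$, exactly as in the proof of Lemma~\ref{unbounded}. The estimates there (via Lemma~\ref{bound:Da}, Lemma~\ref{good:sign} and the coercivity of $\widetilde H-\widetilde H(\phi_-)$ near $\phi_-$) give $\tau$-uniform bounds $\|\Psi_\tau'\|_\infty+\|\DD_X^\alpha[\Psi_\tau]\|_\infty\le C$, so the nonlocal term carries the small prefactor $\tau^{-\alpha/2}$... but that is not small, so in fact one must be more careful: the correct smallness is obtained by noting that on the \emph{original} scale the perturbation between $\phi_\tau$ and $\phi_0$ is governed by the difference of the two equations $\tau\phi_\tau''=\tilde h(\phi_\tau)-\tilde h(\phi_0)-\tau\phi_0''$ (using $\DD^\alpha[\phi_0]=\tilde h(\phi_0)$), treated as a delayed integro-differential equation as in Appendix~\ref{appendix:B}; a Gronwall-type estimate along $(-\infty,\xi_\tau]$ then yields $|\phi_\tau-\phi_0|\le C\tau$ and $|\phi_\tau'-\phi_0'|\le C\tau^{1/(2-\alpha)}$ on an interval whose length is $O(\tau^{-1/(2-\alpha)})$, the natural scale on which the exponential contraction rate $\lambda$ of the linearisation at $\phi_-$ (root of $P(z)=\tau z^2+z^\alpha-\tilde h'(\phi_-)$, see Appendix~\ref{appendix:roots}) balances the $O(\tau)$ forcing. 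The matching point $\xi_\tau^0$ where $\phi_\tau(\xi_\tau^0)=\phi_0(\xi_\tau^0)$ exists by an intermediate-value argument because the two trajectories start (as $\xi\to-\infty$) on the same side of $\phi_-$ and the difference changes sign before $\xi_\tau$.

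Finally, monotonicity of $\phi_\tau$ on $I_\tau$ follows from monotonicity of $\phi_0$: since $\phi_0'<0$ is bounded away from $0$ on any compact set and, near $-\infty$, $\phi_0'(\xi)\sim -\text{const}\cdot|\xi|^{-\alpha-1}<0$ with $\phi_\tau'$ asymptotic to $b\,e^{\lambda\xi}<0$, the inequality $|\phi_\tau'-\phi_0'|\le C\tau^{1/(2-\alpha)}$ forces $\phi_\tau'<0$ throughout $I_\tau$ once $\tau$ is small enough, because $|\phi_0'|$ dominates $C\tau^{1/(2-\alpha)}$ on the whole of $I_\tau=(-\infty,\xi_\tau]$ precisely thanks to the calibrated choice $\xi_\tau=O(\tau^{-1/(2-\alpha)})$ (at the right endpoint $|\phi_0'(\xi_\tau)|$ is of the same order $\tau^{1/(2-\alpha)}$, so the constants must be tracked, but the sign is preserved).

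I expect the main obstacle to be the simultaneous control of the interval length and the two error bounds: one must choose $\xi_\tau$ just large enough that $\phi_\tau$ has entered the region where $\tilde h\equiv h$ and monotonicity can be propagated, yet not so large that the Gronwall constant (which grows like $e^{\lambda|\xi_\tau|}$ times $\tau$, and $\lambda\sim\tau^{-1/(2-\alpha)}$ in the diffusion-dominated balance) destroys the smallness of the error. Making the two scales $\xi_\tau=O(\tau^{-1/(2-\alpha)})$ and $|\phi_0'(\xi_\tau)|=O(\tau^{1/(2-\alpha)})$ consistent is exactly the delicate bookkeeping; everything else is a transcription of \cite[Theorem~9]{ACH} with $\phi_+\rightsquigarrow\phi_c$.
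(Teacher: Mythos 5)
Your top-level strategy coincides with the paper's: Theorem~\ref{mono:tau:small} is not reproved there, but simply carried over from \cite[Theorem 9]{ACH}, and the justification is exactly your first paragraph --- the modified nonlinearity $\tilde h$ has only the two zeros $\phi_-$ and $\phi_c$ with the correct signs of $\tilde h'$, solutions of (\ref{TWP:modified})--(\ref{TW:m-c}) stay in $[\bar\phi,\phi_-]$, and the proof in \cite{ACH} only uses the construction of solutions on left half-lines $(-\infty,\xi]$ together with a uniform bound on the solution. Up to that point your proposal is in line with the paper (and the $\sqrt\tau$-rescaling detour, which you yourself discard, is indeed a dead end, since $\tau^{-\alpha/2}\to\infty$ as $\tau\to0^+$).

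The details you then supply contain genuine errors. First, the positive root $\lambda$ of $\tau z^2+z^\alpha-\tilde h'(\phi_-)$ does not behave like $\tau^{-1/(2-\alpha)}$ as $\tau\to0^+$: it converges to the order-one value $(\tilde h'(\phi_-))^{1/\alpha}$; the $\tau^{-1/(2-\alpha)}$ scale belongs to the complex roots of the characteristic equation associated with $\phi_c$, see (\ref{complex:tau:small:right}), so the bookkeeping ``Gronwall constant $\sim e^{\lambda|\xi_\tau|}\tau$ with $\lambda\sim\tau^{-1/(2-\alpha)}$'' rests on the wrong asymptotics. Second, and more importantly, your monotonicity step fails as stated: since $\tilde h'(\phi_-)>0$, both $\phi_0$ and $\phi_\tau$ approach $\phi_-$ \emph{exponentially} as $\xi\to-\infty$ (the algebraic rate $|\xi|^{-(1+\alpha)}$ you quote for $\phi_0'$ is the behaviour at $+\infty$, toward $\phi_c$), hence $\phi_0'\to0$ at $-\infty$ and $|\phi_0'|$ cannot dominate the fixed error $C\tau^{1/(2-\alpha)}$ on the whole of $I_\tau$. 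Near $-\infty$ the sign $\phi_\tau'<0$ comes from the construction itself (as in Theorem~\ref{existence}(i), $\phi_\tau$ is built as a small perturbation of $b\,e^{\lambda\xi}$ and is decreasing on a left half-line); the comparison with $\phi_0$ is only needed to propagate the monotonicity forward up to $\xi_\tau=O(\tau^{-1/(2-\alpha)})$. This is precisely why the paper stresses that the argument of \cite{ACH} ``only requires the construction of solutions on intervals of the form $(-\infty,\xi]$ and a uniform bound on the solution''.
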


The proof of this result can be adapted to the travelling wave equation (\ref{TWP:modified}), without change, since it only requires the construction of solutions on intervals of the form $(-\infty,\xi]$ and a uniform bound on the solution.

 From this result we have conjectured monotonicity in the `tail' as well (see \cite{ACH}) for sufficiently small $\tau$. This is still an open problem, but the conjecture is supported by the numerical examples in Section~\ref{sec:numerics}. Instead of monotonicity, we prove the following theorem, that is sufficient for our purpose:

\begin{thm}\label{mono:tau:small:tail}
 Let $\phi_\tau$ be a solution of (\ref{TWP:modified})-(\ref{TW:m-c}).
 If $\tau>0$ is sufficiently small, then there exists a constant $C_\tau=O(\tau^{\frac{\alpha}{2-\alpha}})$ as $\tau \to 0^+$, such that $-\sqrt{c/3}<\phi_c-C_\tau<\phi_-$ and $\phi_\tau(\xi)\in (\phi_c-C_\tau ,\phi_-)$ for all $\xi\in[\xi_\tau,\infty)$, where $\xi_\tau= O\left( \tau^{-\frac{1}{2-\alpha}}\right)$ as $\tau \to 0^+$ as in Theorem~\ref{mono:tau:small}. 
\end{thm}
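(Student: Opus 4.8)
The goal is to show that for small $\tau$, the solution $\phi_\tau$ of the modified problem (\ref{TWP:modified})--(\ref{TW:m-c}) never drops below $\phi_c - C_\tau$ on the tail interval $[\xi_\tau,\infty)$, with $C_\tau = O(\tau^{\alpha/(2-\alpha)})$. The starting point is Theorem~\ref{mono:tau:small}: on $I_\tau = (-\infty,\xi_\tau]$ the solution is monotone decreasing and $\tau$-close to the $\tau=0$ profile $\phi_0$, which decays monotonically to $\phi_c$; hence at $\xi_\tau$ we already know $\phi_\tau(\xi_\tau)$ is within $O(\tau^{1/(2-\alpha)})$ (or better) of a value close to $\phi_c$, and certainly in $(\phi_c, \phi_-)$ for $\tau$ small. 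I would then use the energy identity (\ref{energy:form}) adapted to $\tilde h$, namely
\[
\frac{\tau}{2}(\phi_\tau'(\xi))^2 + \int_{-\infty}^\xi \phi_\tau'(y)\,\DD^\alpha[\phi_\tau](y)\,dy = \widetilde H(\phi_\tau(\xi)) - \widetilde H(\phi_-) \ge 0,
\]
together with the structure of $\widetilde H - \widetilde H(\phi_-)$: it vanishes at $\phi_-$, is positive on $(\bar\phi,\phi_-)\setminus\{\phi_-\}$, has a local max at $\phi_c$ and vanishes again at $\bar\phi<\phi_c$. The key quantitative input is that near $\phi_c$ one has $\widetilde H(\phi)-\widetilde H(\phi_-) = (\widetilde H(\phi_c)-\widetilde H(\phi_-)) - \tfrac{1}{2}|\tilde h'(\phi_c)|(\phi-\phi_c)^2 + O(|\phi-\phi_c|^3)$, so if $\phi_\tau$ dips a distance $d$ below $\phi_c$ the right-hand side decreases by $\sim |\tilde h'(\phi_c)|\,(\text{gap})\cdot d$ type terms — i.e. the energy is forced to stay bounded.

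**Main steps.** (1) Rescale $\xi = \sqrt\tau X$, $\phi_\tau(\xi)=\Psi_\tau(X)$ as in Lemma~\ref{unbounded}, so that (\ref{TWP:modified}) becomes $\Psi_\tau'' + \tau^{-\alpha/2}\DD^\alpha_X[\Psi_\tau] = \tilde h(\Psi_\tau)$, and record from Theorem~\ref{mono:tau:small} and the uniform-bound arguments of Lemma~\ref{unbounded} that $\|\Psi_\tau'\|_\infty$ and $\|\DD^\alpha_X[\Psi_\tau]\|_\infty$ are bounded independently of $\tau$ on the region where $\Psi_\tau$ stays in $(\bar\phi,\phi_-)$ (this uses (\ref{H:tilde:conditon}) exactly as (\ref{phiprime:bdd:by:phi})). (2) Suppose, for contradiction, that $\phi_\tau$ leaves $(\phi_c - C_\tau, \phi_-)$ on $[\xi_\tau,\infty)$ for a $C_\tau$ to be chosen of size $O(\tau^{\alpha/(2-\alpha)})$; since $\phi_\tau(\xi_\tau) > \phi_c$ and $\phi_\tau < \phi_-$ always, the trajectory must pass through $\phi_c$ and then reach a first local minimum at some $\xi_{min}$ with $\phi_\tau(\xi_{min}) = \phi_c - C_\tau$ (or, if it oscillates, a first local minimum below $\phi_c-C_\tau$; handle both). (3) Evaluate the energy identity at $\xi_{min}$: on the left, $\tfrac{\tau}{2}(\phi_\tau'(\xi_{min}))^2=0$ and the nonlocal integral $\int_{-\infty}^{\xi_{min}}\phi_\tau' \DD^\alpha[\phi_\tau]$ is, after rescaling, bounded by $\tau^{-\alpha/2}C\int|\Psi_\tau'|\,dX \le \tau^{-\alpha/2}C(\phi_- - \phi_\tau(\xi_{min}))\le \tau^{-\alpha/2}\,C'$ — but wait, this blows up, so instead I would split the nonlocal integral at $\xi_\tau$: the part over $(-\infty,\xi_\tau)$ is controlled by $\widetilde H(\phi_\tau(\xi_\tau))-\widetilde H(\phi_-)$ which is $o(1)$ (small, since $\phi_\tau(\xi_\tau)$ is near $\phi_c$ only after subtracting... actually it is near a value between $\phi_c$ and $\phi_-$); the part over $(\xi_\tau,\xi_{min})$ has length $O(1)$ in $X$ because $\Psi_\tau'$ is bounded below in modulus away from $0$ on the descent from $\phi_c$ to $\phi_c-C_\tau$? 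This is the delicate point — see below. (4) Conclude $\widetilde H(\phi_c - C_\tau) - \widetilde H(\phi_-) \le (\text{small})$, but the left side is $\widetilde H(\phi_c)-\widetilde H(\phi_-) - \tfrac12|\tilde h'(\phi_c)|C_\tau^2 + \dots$, which exceeds the right side once $C_\tau$ is chosen appropriately, a contradiction — unless $C_\tau^2$ is of the order of the ``small'' quantity, which fixes $C_\tau = O(\tau^{\alpha/(4-2\alpha)})$, and one must check this matches $O(\tau^{\alpha/(2-\alpha)})$ (it does not automatically, so the bookkeeping of exponents is where care is needed, possibly the claimed rate comes from a sharper estimate of the nonlocal integral over $(\xi_\tau,\xi_{min})$ using $\DD^\alpha_X[\Psi_\tau]$ sign information as in Lemma~\ref{blow-down}).

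**The main obstacle.** The crux is controlling the nonlocal energy contribution $\int_{\xi_\tau}^{\xi_{min}}\phi_\tau'\,\DD^\alpha[\phi_\tau]$ precisely enough. Unlike the blow-down analysis, here $\phi_\tau$ stays in a bounded region, so $\DD^\alpha[\phi_\tau]$ is uniformly bounded, but the factor $\tau^{-\alpha/2}$ in the rescaled equation means we genuinely need the $X$-length of the excursion below $\phi_c$ to be small (of order $\tau^{\alpha/2}$-ish) to kill that factor. One gets smallness of the excursion either from a Gronwall/ODE-comparison argument near the (hyperbolic-type, since $\tilde h'(\phi_c)<0$) equilibrium $\phi_c$ — treating $\tau^{-\alpha/2}\DD^\alpha_X[\Psi_\tau]$ as a bounded forcing and using the instability of $\phi_c$ to show the trajectory cannot linger — or from a bootstrap on the energy. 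I expect the argument to run: the energy at $\xi_\tau$ is $\le \widetilde H(\phi_\tau(\xi_\tau))-\widetilde H(\phi_-)$, and monotone-decreasing closeness to $\phi_0$ gives this is small of a definite power of $\tau$; any dip below $\phi_c$ by $C_\tau$ must be ``paid for'' by the energy having increased by at least $\sim |\tilde h'(\phi_c)|\,(\widetilde H(\phi_c)-\widetilde H(\phi_\tau(\xi_\tau)))^{1/2}\cdot$(something), while the nonlocal term can only increase it by $O(\tau^{-\alpha/2})\times(\text{excursion length})$; balancing gives the stated $C_\tau=O(\tau^{\alpha/(2-\alpha)})$ after identifying the excursion length with $O(\tau^{1/(2-\alpha)})$ in $\xi$, i.e. $O(\tau^{-1/(2(2-\alpha))+1/(2-\alpha)})$... — in short, matching all exponents to land exactly on $O(\tau^{\alpha/(2-\alpha)})$ is the fiddly part and will likely require invoking the sign of $\DD^\alpha_X[\Psi_\tau]$ (negative, by an integration-by-parts argument as in the proof of Lemma~\ref{blow-down}) to get a one-sided, sharper bound rather than the crude absolute-value estimate.
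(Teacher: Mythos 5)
Your central mechanism --- extracting a contradiction from the energy identity (\ref{energy:form}) adapted to $\tilde h$ --- cannot deliver this theorem, and the specific step it hinges on is false. By Theorem~\ref{mono:tau:small} and the $\xi^{-\alpha}$ decay of $\phi_0$ to $\phi_c$, at $\xi_\tau=O(\tau^{-1/(2-\alpha)})$ the solution is already within $O(\tau^{\alpha/(2-\alpha)})$ of $\phi_c$; hence $\widetilde H(\phi_\tau(\xi_\tau))-\widetilde H(\phi_-)$ is not ``small of a definite power of $\tau$'' as you assert in your balancing step, but is already the fixed order-one number $\widetilde H(\phi_c)-\widetilde H(\phi_-)>0$, so the inequality you aim for in step (4), $\widetilde H(\phi_c-C_\tau)-\widetilde H(\phi_-)\le(\text{small})$, can never be reached. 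More structurally, $\widetilde H-\widetilde H(\phi_-)$ has a smooth local maximum at $\phi_c$, so an overshoot of size $d$ below $\phi_c$ changes the right-hand side only by $O(d^2)$ --- exactly the same cost as sitting a distance $d$ above $\phi_c$ --- and the only unconditional constraint the identity gives, via (\ref{H:tilde:conditon}), is $\phi_\tau\in[\bar\phi,\phi_-]$, where $\bar\phi<-\sqrt{c/3}$ because $\widetilde H(-\sqrt{c/3})-\widetilde H(\phi_-)=H(-\sqrt{c/3})-H(\phi_-)>0$. So the energy budget by itself allows excursions all the way down to $\bar\phi$, i.e.\ outside the region where $h=\tilde h$, and is blind to the sign of $\phi_\tau-\phi_c$; no bookkeeping of the nonlocal contribution (whose integrand $\phi_\tau'\,\DD^\alpha[\phi_\tau]$ is not pointwise nonnegative, so even the incremental version of the identity between $\xi_\tau$ and a putative minimum has no usable sign) can close this, and you yourself observe that your exponents land on $\tau^{\alpha/(4-2\alpha)}$ rather than the claimed $\tau^{\alpha/(2-\alpha)}$.

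The paper's proof works by an entirely different mechanism, which is what actually produces the rate: it splits $\DD^\alpha$ at a point $\xi_0<\xi_\tau$, rewrites (\ref{TWP:modified}) as the linearisation at $\phi_c$ with inhomogeneity $Q$, and represents $\Phi_\tau=\phi_\tau-\phi_c$ by the variation-of-constants formula (\ref{Phi:Cap}) with the fundamental solution $v$ of (\ref{linear:eq:sol}). The small-$\tau$ analysis of $v$ in Lemma~\ref{v:bhv:tau:small} (monotonicity, inflection point $\eta_{inflex}\sim\tau^{1/(2-\alpha)}$, tail decay $\eta^{-\alpha}$ with $O(\tau^{2\alpha/(2-\alpha)})$ constants) shows that $-v'$ acts as an approximate Dirac delta concentrated at $\eta_{inflex}$; Step~2 (Lebesgue differentiation) replaces the convolution with $Q$ by $Q(\eta-\eta_{inflex})$ up to errors $O(\tau^{\alpha/(2-\alpha)})$, and Step~3 then yields the two-sided quadratic inequality $-\overline{C_2}\,\tau^{\alpha/(2-\alpha)}-C_2X^2\le X\le \overline{C_1}\,\tau^{\alpha/(2-\alpha)}-C_1X^2$ for $X=\Phi_\tau(\eta-\eta_{inflex})$, whose roots trap $\phi_\tau-\phi_c$ in an $O(\tau^{\alpha/(2-\alpha)})$ neighbourhood of zero for all later $\eta$. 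It is this quantitative localisation around the linearised dynamics at $\phi_c$ --- not an energy balance --- that prevents the trajectory from leaving the region where $h=\tilde h$; to salvage your route you would need a substitute for precisely this ingredient.
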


As we have mentioned earlier, we postpone the proof of this result to Section~\ref{sec:control}. We assume for the rest of this section that the result is true. With that we can prove the following Lemma:
\begin{lem}\label{thm:profile:m-c:tau}
Let $(\phi_-,\phi_+;c)$ satisfy the Rankine-Hugoniot condition~(\ref{RHC}) and (\ref{lin:ass}) with $\phi_c=-\phi_- -\phi_+$. If $\tau>0$ is sufficiently small then there exists a solution $\phi_\tau\in C_b^3(\R)$ of the problem (\ref{TWP}) and (\ref{TW:m-c}). It is unique (up to a shift) among all $\phi_\tau\in \phi_- + H^2(-\infty,0)\cap C_b^3(\R)$. In particular, this means that the set $\Sigma_c$ is non-empty.
\end{lem}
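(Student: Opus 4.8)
\textbf{Proof plan for Lemma~\ref{thm:profile:m-c:tau}.}
The plan is to transfer the existence-and-convergence statement that is known for the modified problem~\eqref{TWP:modified}--\eqref{TW:m-c} to the original problem~\eqref{TWP}--\eqref{TW:m-c}, using Theorem~\ref{mono:tau:small:tail} to confine the trajectory to the region where $h$ and $\tilde h$ agree. First I would recall that, by the adaptation of the results of \cite{ACH} described just above, for every $\tau>0$ the problem~\eqref{TWP:modified}--\eqref{TW:m-c} has a solution $\phi_\tau\in C_b^3(\R)$ (unique up to a shift among the admissible class), which converges to $\phi_-$ as $\xi\to-\infty$ and to $\phi_c$ as $\xi\to+\infty$, and which lies in the interval $(\bar\phi,\phi_-)$. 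The key observation is that $h$ and $\tilde h$ coincide on $[-\sqrt{c/3},\infty)$, by the very definition~\eqref{h:tilde}; hence any trajectory of the modified equation that happens to stay in $(-\sqrt{c/3},\phi_-)$ is automatically a trajectory of the original equation~\eqref{TWP}, and conversely.

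Next I would invoke Theorem~\ref{mono:tau:small} together with Theorem~\ref{mono:tau:small:tail}. For $\tau$ small enough, Theorem~\ref{mono:tau:small} provides an interval $I_\tau=(-\infty,\xi_\tau]$ with $\xi_\tau=O(\tau^{-1/(2-\alpha)})$ on which $\phi_\tau$ is monotone decreasing and stays close to the $\tau=0$ profile $\phi_0$, which itself is monotone decreasing from $\phi_-$ to $\phi_c$ and therefore takes values in $(\phi_c,\phi_-)\subset(-\sqrt{c/3},\phi_-)$; choosing $\tau$ small the $O(\tau)$ error keeps $\phi_\tau(\xi)>-\sqrt{c/3}$ on $I_\tau$. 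On the complementary half-line $[\xi_\tau,\infty)$, Theorem~\ref{mono:tau:small:tail} gives, for $\tau$ sufficiently small, that $\phi_\tau(\xi)\in(\phi_c-C_\tau,\phi_-)$ with $C_\tau=O(\tau^{\alpha/(2-\alpha)})\to0$ and $\phi_c-C_\tau>-\sqrt{c/3}$. Putting the two pieces together, for $\tau$ small enough $\phi_\tau(\xi)\in(-\sqrt{c/3},\phi_-)$ for \emph{all} $\xi\in\R$, so on this whole range $\tilde h(\phi_\tau)=h(\phi_\tau)$ and $\phi_\tau$ solves the \emph{original} equation~\eqref{TWP} with the far-field conditions~\eqref{TW:m-c}; the regularity $\phi_\tau\in C_b^3(\R)$ and the uniqueness up to a shift are inherited from the modified problem (and can be re-derived directly for~\eqref{TWP} by the continuation and uniqueness lemmas of \cite{ACH} used in Theorem~\ref{existence}, since on the relevant range the two equations are identical). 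Finally, since $\lim_{\xi\to\infty}\phi_\tau(\xi)=\phi_c$, such a $\tau$ belongs to $\Sigma_c$ by definition, so $\Sigma_c\neq\emptyset$.

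The main obstacle is not in the argument just sketched, which is essentially a gluing of two confinement estimates, but in the estimate that feeds it, namely Theorem~\ref{mono:tau:small:tail}: one must control the tail of $\phi_\tau$ on the unbounded interval $[\xi_\tau,\infty)$ and show it cannot dip below $-\sqrt{c/3}$, uniformly as $\tau\to0^+$, without having monotonicity available. That is exactly the content deferred to Section~\ref{sec:control}, and here I would simply cite it. A minor point to be careful about is matching the two regimes at $\xi_\tau$: one needs the bound from Theorem~\ref{mono:tau:small} at the right endpoint of $I_\tau$ to be compatible with the hypotheses of Theorem~\ref{mono:tau:small:tail} at the left endpoint of its interval (both are stated with the same $\xi_\tau=O(\tau^{-1/(2-\alpha)})$, so this is consistent), and to check that $\phi_\tau(\xi_\tau)$ indeed lies in $(\phi_c-C_\tau,\phi_-)$ so that no gap opens between the two ranges.
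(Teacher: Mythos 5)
Your proposal is correct and follows essentially the same route as the paper: existence for the modified problem \eqref{TWP:modified}--\eqref{TW:m-c} as in \cite{ACH}, then Theorems~\ref{mono:tau:small} and~\ref{mono:tau:small:tail} to confine the profile to $(-\sqrt{c/3},\phi_-)$ for all $\xi\in\R$ when $\tau$ is small, where $h=\tilde h$, so the profile solves \eqref{TWP} with \eqref{TW:m-c} and hence $\Sigma_c\neq\emptyset$. The extra care you take in matching the two confinement regimes at $\xi_\tau$ is consistent with (and implicit in) the paper's argument.
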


\begin{proof}
If $\tau>0$, the existence of solutions to (\ref{TWP:modified})-(\ref{TW:m-c}) is shown as in \cite{ACH}. We now use Theorem~\ref{mono:tau:small} and Theorem~\ref{mono:tau:small:tail}, in particular, for such sufficiently small $\tau$, these solutions satisfy $-\sqrt{c/3}<\phi_\tau(\xi)<\phi_-$ for all $\xi\in\R$. In this range of $\phi$'s, $h$ and $\tilde{h}$ coincide, thus, these solutions are also solutions of the original equation (\ref{TWP}), since they satisfy (\ref{TW:m-c}), and this finishes the proof. 
\end{proof}

We are in the position to finish the proof of our main result.

\begin{proof}[Proof of Theorem~\ref{TW:main:theorem}]
  Throughout this proof, $C$ denotes a positive constant of order $1$ which is independent of $\tau$ and of any other parameter or small constant that may be used.

  We have to prove that $\Sigma_+$ is non-empty. We argue by contradiction; let us assume that $\Sigma_+ = \emptyset$. Now, by Theorem~\ref{existence}, we have that $\Sigma_+$, $\Sigma_u$ and $\Sigma_c$ are disjoint, and that $\Sigma_+ \cup \Sigma_u \cup \Sigma_c= (0,+\infty)$. From Lemmas~\ref{unbounded} ,~\ref{sigma_u_open} and ~\ref{thm:profile:m-c:tau}, we then deduce that the set $\Sigma_c$ must be closed in $(0,+\infty)$. This implies the existence of a $\tau_0 \in \Sigma_c\cap \partial \Sigma_c$ such that, for all $\delta>0$ the set $\Sigma_u \cap (\tau_0-\delta, \tau_0+\delta)$ is non-empty.

Since $\tau_0 \in \Sigma_c$, the far-field behaviour of $\phi_{\tau_0}$ is given by (\ref{TW:m-c}). We recall the construction of solutions (see \cite[Lemma~2]{ACH}) satisfying (\ref{far-fieldL}): For all $\tau>0$, let $\lambda_\tau$ be the positive root of $\tau z^2+z^\alpha-h'(\phi_-)=0$ and for any $\e>0$ let also $I_{\tau,\e}=(-\infty,\xi_{\tau,\e}]$ with $\xi_{\tau,\e}=\log\e/\lambda_{\tau}$. Then, there exists an order one constant $C>0$, such that:
   \[
   \phi_{\tau}\in \phi_- + H^2(I_{\tau,\e}), \quad
   \|\phi_{\tau}-\phi_--e^{\lambda_{\tau} \xi}\|_{H^2(I_\e)}\leq C \e^2\,.
   \]
We conclude, that for all $\e>0$ there exists $\delta_\e>0$ and $\xi^\e$, defined by,
\begin{equation*}
\xi^\e = \inf_{\tau \in (\tau_0-\delta_\e,\tau_0+\delta_\e)}\left\{\frac{\log\e}{\lambda_\tau}\right\},
\end{equation*}
such that, for all $\tau \in (\tau_0 - \delta_\e, \tau_0 + \delta_\e)$,
\begin{equation*}
   |\phi_{\tau_0}(\xi) - \phi_{\tau}(\xi) |, \ |\phi_{\tau_0}'(\xi) - \phi_{\tau}'(\xi) |\ < \e, \quad\forall \xi<\xi^\e.
 \end{equation*}

At this stage, we can apply Theorem~\ref{CDTh}, as explained in Appendix~\ref{appendix:B}, about continuous dependence on finite intervals, to get possibly a smaller neighbourhood of $\tau_0$ for which solutions and their first derivative remain close, by an order $\e$ constant, to $\phi_{\tau_0}$ and its derivative, respectively, in a much larger interval $(-\infty,\xi_{\tau_0}^\e]$ with $\xi_{\tau_0}^\e>\xi^\e$. 

  Now, we follow the steps of Appendix~\ref{appendix:v} to solve the equation implicitly as in (\ref{var:consts1}) for $a= -h'(\phi_c)>0$. For convenience we also introduce the notation:
\begin{equation}\label{h:prime:phic}
  h_c':=h'(\phi_c)<0\,.
  \end{equation}

  We split the interval of integration on the non-local operator $\DD^\alpha[\cdot]$ at $\xi_{\tau_0}^\e$. Thus, the integral part for $\xi>\xi_{\tau_0}^\e$ can be considered as a classical Caputo derivative, while the remainder is treated as a known inhomogeneity. This gives:
\begin{equation}\label{linzed:eq:phic}
\tau_0 \phi''_{\tau_0} + \DD^\alpha_{\xi_{\tau_0}^\e}[\phi_{\tau_0}] - h_c' \phi_{\tau_0} = h(\phi_{\tau_0}) - h_c' \phi_{\tau_0} - d_\alpha \int_{-\infty}^{\xi_{\tau_0}^\e}\frac{\phi'_{\tau_0}(y)}{(\xi-y)^\alpha} \, dy\,.
\end{equation}
For convenience, we can shift the independent variable as $\eta=\xi-\xi_{\tau_0}^\e$ and introduce the function
\[
\Phi_{\tau_0}(\eta)=\phi_{\tau_0}(\xi) -\phi_c.
\]
Applying these changes of variables to (\ref{linzed:eq:phic}) gives the equation:
\[
 \tau_0 \Phi''_{\tau_0}(\eta) + \DD^\alpha_0\left[\Phi_{\tau_0}(\eta)\right] - h_c' \Phi_{\tau_0}(\eta) = Q(\eta;\tau_0)
\]
where $Q$ is the term that contains the remainder terms of the new formulation, namely,
\[
Q(\eta;\tau_0):= h(\phi_{\tau_0}(\eta + \xi_{\tau_0})) - h_c' \Phi_{\tau_0}(\eta) - d_\alpha \int_{-\infty}^{0}\frac{\Phi'_{\tau_0}(z)}{(\eta-z)^\alpha}\, dz.
\]
We can rewrite this term, using the explicit formula (\ref{TWP}) of $h$, as follows (we shall use this notation below)
\begin{equation}\label{Q:tau0:v2}
  Q(\eta;\tau_0):= ( \Phi_{\tau_0}(\eta) +3\phi_c ) (\Phi_{\tau_0}(\eta))^2 + R(\eta;\tau_0)
\end{equation}
with
\begin{equation}\label{R:tau0}
  R(\eta;\tau_0) = - d_\alpha \int_{-\infty}^{0}\frac{\Phi'_{\tau_0}(z)}{(\eta-z)^\alpha}\, dz.
\end{equation}

Once we reach this point, we can write the solution $\Phi_{\tau_0}$ implicitly for all $\eta>0$ as follows: 
\begin{equation}\label{Phi:tau0:VOC}
\Phi_{\tau_0}(\eta) = \Phi_{\tau_0}(0^+)v(\eta;\tau_0) + \frac{\tau_0}{h_c'} \Phi'_{\tau_0}(0^+)v'(\eta;\tau_0) + \frac{1}{h_c'} \int_{0}^{\eta}{v'(r;\tau_0) Q(\eta-r;\tau_0)\, dr} 
\end{equation}
where $v(\eta;\tau_0)$ is the solution of the homogeneous equation
\[
\tau_0 v'' +\DD_0^\alpha[v]-h_c' v=0 \quad \mbox{with} \quad v(0;\tau_0)=1, \ v'(0;\tau_0)=0\, .
\]
The main properties of this function $v(\eta;\tau)$ are given in Appendix~\ref{appendix:v} (for any $\tau$). Indeed, due to Lemma~\ref{v:cont:behav}, $v(\eta;\tau)$ is continuous with respect to $\tau>0$. Thus, we obtain the same continuity with respect to $\tau$ for (\ref{Phi:tau0:VOC}) since, in particular, $v'(\eta;\tau)$ and $R(\eta;\tau)$ are continuous at $\tau_0$ as well. 

Now, as explained above, by continuity with respect to $\tau$ around the value $\tau_0$, 
we have that for all $\e>0$ there exists $\delta_\e>0$ and an interval $(-\infty,\xi_{\tau_0}^\e)$ such that, for all $\tau\in (\tau_0-\delta_\e,\tau_0+\delta_\e)$ and $\xi\in (-\infty,\xi_{\tau_0}^\e)$, $|\phi_{\tau_0}(\xi)-\phi_\tau(\xi)|<\e$. Now, since $\phi_{\tau_0}(\xi) \to \phi_c$ as $\xi\to\infty$, by taking $\delta_\e$ smaller if necessary, we can take $\xi_{\tau_0}^\e$ large enough so that $\phi_{\tau_0}(\xi)$ is close to $\phi_c$. Moreover, again, by continuity and making $\delta_\e$ smaller, there exists $\overline\xi_{\tau_0}^\e\gg \xi_{\tau_0}^\e$, such that $|\phi_{\tau_0}(\xi)-\phi_c|<\e$ and $|\phi_{\tau_0}(\xi)-\phi_\tau(\xi)|<\e$ for all $\tau\in (\tau_0-\delta_\e,\tau_0+\delta_\e)$ and $\xi \in (\xi_{\tau_0}^\e,\overline\xi_{\tau_0}^\e)$. We can achieve
\begin{equation}\label{order1overeps}
  \overline\xi_{\tau_0}^\e- \xi_{\tau_0}^\e =O(1/\e) \quad\mbox{as}\quad \e\to0^+,
\end{equation}
   by taking $\delta_\e$ smaller if necessary.

By assumption, $\tau_0\in\partial \Sigma_c$, $\Sigma_u$ is an open set and $\Sigma_+$ is empty. That means, that for all $\e>0$, there exists $\overline\tau \in  (\tau_0-\delta_\e,\tau_0+\delta_\e)\cap \Sigma_u$ with 
\begin{equation}\label{key:point}
|\phi_{\tau_0}(\xi)-\phi_{\overline\tau}(\xi)|<2\e \quad \mbox{for all}\quad  \xi\in  (\xi_{\tau_0}^\e,\overline\xi_{\tau_0}^\e).
\end{equation}

This $\overline\tau$ is not unique, it depends on $\e$ (via $\delta_\e$), we skip this in the notation for simplicity, but we have to bear in mind that by taking other values of $\e$ there is always such a $\overline\tau$.

Due to $\bar\tau \in \Sigma_u$ and Lemma~\ref{blow-down}, there exists $\xi^*_{\bar\tau}$, hence $\eta^*:=\xi^*_{\bar\tau}-\xi_{\tau_0}^\e$, such that
\begin{equation}\label{Phi:overlinetau:*}
\Phi_{\overline{\tau}}(\eta) \sim -\frac{C\sqrt{\overline\tau}}{\eta^*-\eta}\, , \ \ \text{as} \ \ \eta \to (\eta^*)^-\, .
\end{equation}
Then, $\Phi_{\overline{\tau}}(\eta)=\phi_{\overline\tau}(\eta+\xi_{\tau_0}^\e)-\phi_c$, in the interval of existence, can be written as:
\begin{equation}\label{Phi:overlinetau:VOC}
  \Phi_{\overline{\tau}}(\eta) = \Phi_{\overline{\tau}}(0^+)v(\eta;\overline{\tau}) + \frac{\overline{\tau}}{h_c'} \Phi'_{\overline{\tau}}(0^+)v'(\eta;\overline{\tau}) + \frac{1}{h_c'} \int_{0}^{\eta}{v'(r;\overline{\tau}) Q(\eta-r;\overline{\tau})\, dr}\,,
\end{equation}
where necessarily $\eta\leq\overline\xi_{\tau_0}^\e-\xi_{\tau_0}^\e <\eta^*=\xi^*_{\bar\tau}-\xi_{\tau_0}^\e$, where $\xi^*_{\bar\tau}$ depends on $\overline\tau$.

If we take into account this behaviour in (\ref{Phi:overlinetau:*}), in terms of $\eta$, we notice that the first two terms in (\ref{Phi:overlinetau:VOC}) are uniformly bounded by a constant proportional to $\e$: this is because $v(\eta;\overline{\tau})\sim \eta^{-\alpha}$ and $v'(\eta;\overline{\tau})\sim \eta^{-(1+\alpha)}$ as $\eta\to \infty$ (see Appendix~\ref{appendix:v} Lemma~\ref{v:cont:behav}), and by $|\Phi_{\overline\tau}(0)|<\e$ and $|\Phi_{\overline\tau}'(0)|<C$, where this holds by continuity in $\tau$ near $\tau_0$.

The integral term in (\ref{Phi:overlinetau:VOC}) with $R(\eta;\overline{\tau})$ i.e. the linear part of $Q(\eta;\overline{\tau})$, can be controlled again by continuity with respect to $\tau$, since, by construction, this term is close to the one for $\tau_0$ (observe that this term corresponds to the solution with $\xi<\xi_{\tau_0}^\e$ that is given and close to $\phi_{\tau_0}$ in $(-\infty,\xi_{\tau_0}^\e]$). On the other hand, the integral term with the first two terms of $Q(\eta;\overline{\tau})$ govern the asymptotic behaviour for large $\eta$ but with $\eta <\eta^*$. Let us make this more precise.

Let us assume that $\eta<\eta^*$ and let $m>0$ be a small number (that we relate to $\e$ below), and $M \in (m,\eta)$. Then we can split the integral of (\ref{Phi:overlinetau:VOC}) in the following four terms:
\begin{align}
  \int_0^\eta  v'(r;\overline\tau)Q(\eta-r;\overline\tau)dr =\int_0^m v'(r;\overline\tau)( \Phi_{\overline\tau}(\eta-r) +3\phi_c ) (\Phi_{\overline\tau}(\eta-r))^2dr \label{VOC:int}\\
  +  \int_m^M v'(r;\overline\tau) ( \Phi_{\overline\tau}(\eta-r) +3\phi_c ) (\Phi_{\overline\tau}(\eta-r))^2 dr\label{VOC:int2} \\
  +\int_M^\eta v'(r;\overline\tau) ( \Phi_{\overline\tau}(\eta-r) 
  + 3\phi_c ) (\Phi_{\overline\tau}(\eta-r))^2 dr \label{VOC:int3}\\
  + \int_0^\eta  v'(r;\overline\tau) R(\eta-r;\overline\tau)dr. \label{VOC:int4}
\end{align}

Take $m=\e^p$ with $p> 1/2$, and $M=O(1/\e)$, the latter is possible by (\ref{order1overeps}). This last condition allows to control the term (\ref{VOC:int2}) by an $O(\e)$ constant: since in this range $\Phi_{\overline\tau}$ satisfies (\ref{key:point}), the non-linear term contribution is quadratic. The last non-linear term (\ref{VOC:int3}) is controlled by the behaviour of $v'(\eta)$ for large $\eta$ ($v'(\eta)\sim \eta^{-(1+\alpha)}$ as $\eta\to \infty$ and $\eta>M=O(1/\e)$) and also by the fact that in this range $\Phi_{\overline\tau}$ is small and the non-linear contribution is quadratic. As mentioned above, the last integral (\ref{VOC:int4}) can be controlled by continuity with respect to $\tau$ in $v$ and $R$, since this is a perturbation of the similar term in equation (\ref{Phi:tau0:VOC}), but $\Phi_{\tau_0}(\eta) \to 0$ as $\eta\to\infty$, thus it must be smaller than a constant of $O(\e)$.

Thus, except for the first integral term, which is the right-hand side of (\ref{VOC:int}), all other terms stay below (in absolute value) a constant of order $\e$ (as a consequence, mainly of (\ref{key:point}) and the behaviour of $v$ for large $\eta$).

Suppose that $\eta^*=\eta + C_\e$ where $C_\e>0$, let us see how the behaviour of such distance to $\eta^*$ can be estimated in terms of $\e$. The right-hand side of (\ref{VOC:int}) can be estimated as, 
\[
\left|\int_0^m  v'(r;\overline\tau) ( \Phi_{\overline\tau}(\eta-r) +3\phi_c ) (\Phi_{\overline\tau}(\eta-r))^2dr \right| \leq \frac{C \, m^2 \sqrt{\overline\tau}}{(\eta^*-\eta)^3},
\]
where we have used (\ref{Phi:overlinetau:*}) (this is the worst case scenario, where the dominant term of the non-linear part is $(\Phi_{\overline\tau})^3$), and, see Appendix~\ref{appendix:v} Lemma~\ref{v:cont:behav}, that
\[
v'(r;\overline{\tau})= O\left(\frac{h_c'}{\overline{\tau}} r\right)  \quad\mbox{as} \ r\to 0^+,
\]
in particular $v'(r)<0$ for small, but positive, values of $r$. So as long as $(\eta^*-\eta)> C_\e$ with  $C_\e\propto \e^{\frac{2p-1}{3}}(\overline\tau)^{1/6}$, the first term in (\ref{VOC:int}) is smaller in absolute value than a constant of order $\e$. Then, putting together the estimates on all other terms of (\ref{Phi:overlinetau:VOC}), we get $|\Phi_{\overline\tau}(\eta)|<\e C$.

Then we can take $\eta$ larger, thus closer to $\eta^*$: Indeed we can get a smaller $m$, by taking $p$ larger, to guarantee that $|\Phi_{\overline\tau}(\eta)|< \e C$ by keeping $C_\e\propto \e^{\frac{2p-1}{3}}(\overline\tau)^{1/6}$, which is the distance to $\eta^*$. Thus the quadratic behaviour of the non-linear term is valid for as close as we want to $\eta-m$, since we can keep $|\Phi_{\overline\tau}(\eta)|< \e C$ , thus the estimates on the other terms of (\ref{Phi:overlinetau:VOC}) can be done as before for the new $\eta$ and $m$. We can repeat the process for $\eta$ closer and closer to $\eta^*$ by taking a larger $p$ for any $\e$. 

On the other hand, before blow-up, but close to it, we have that there is a constant $C$ proportional to $\sqrt{\overline\tau}$ but independent of $\e$ (see Lemma~\ref{blow-down}-(\ref{blow-down:bhv})), such that $|\phi_{\overline\tau}(\xi)|>C\sqrt{\overline\tau}/( \xi^*-\xi)$ near $\xi^*$, or in terms of the variable $\eta$
\[
|\Phi_{\overline\tau}(\eta)+\phi_c|>C\frac{\sqrt{\overline\tau}}{( \eta^*-\eta)}. 
\]
The previous argument allows also $|\Phi_{\overline\tau}(\eta)|<\e C$ near $\eta^*$, but then, this would imply that $\eta^*-\eta > C \sqrt{\overline\tau}$ which contradicts the previous estimates, unless $\overline\tau$ (hence $\tau_0$) is very small (by making the balance $\sqrt{\overline\tau}\sim \e^{\frac{2p-1}{3}}(\overline\tau)^{1/6}$ gives that $\overline\tau\sim  \e^{2p-1}$). If this is the case (taking $\e$ smaller) then $\overline\tau \in\Sigma_c$ by Lemma~\ref{thm:profile:m-c:tau}, which gives a contradiction. 
\end{proof}


\section{Control of the modified problem for small $\tau$: Proof of Theorem~\ref{mono:tau:small:tail}}\label{sec:control}
In this section we prove that solutions of (\ref{TWP:modified})-(\ref{TW:m-c}), provided that $\tau$ is sufficiently small, remain within the range where $h$ and $\tilde{h}$ coincide. As before, we use the notation $\phi_\tau$ for solutions to this problem which are constructed as in \cite{ACH}. Throughout we adopt the notation of Theorem~\ref{mono:tau:small} and assume that $\tau$ is small enough so that the conclusion of this theorem applies.

At this step, we introduce the implicit formulation for the equation (\ref{TWP:modified}) that is used in the proofs that follow. We write the equation as the linearised equation around $\phi_c$ together with the remainder terms as we did in the proof of Theorem~\ref{TW:main:theorem}. We notice that such formulation for equation (\ref{TWP:modified}) is similar, with the obvious changes; the linearised part being the same for $h$ and $\tilde{h}$. The difference is the choice where we take the shift on $\xi$ that defines the new variable $\eta$ (for which we use the same notation) in the current case. As before, we use the notation
\[
  h_c':=h'(\phi_c)=\tilde{h}'(\phi_c)<0\,.
  \]

For the proof of Theorem~\ref{mono:tau:small:tail} with $\tau$ sufficiently small, we shall let $\xi_0\in\R$ such that $\xi_0\ll \xi_\tau$ and, by Theorem~\ref{mono:tau:small}, such that 
\begin{equation}\label{xi:0}
\phi_\tau(\xi_0) - \phi_c >0\,.
\end{equation}
We can take $\xi_0$ sufficiently away from $\xi_\tau$, to guarantee that $|\phi_\tau'(\xi_0)|\ll 1$ (by the exponential behaviour as $\xi\to-\infty$) although it is not necessary; it will be enough to have $|\phi_\tau'(\xi_0)|$ of order one for $\tau$ small. In particular, with such choice we know that $\phi_\tau '(\xi)<0$ for all $\xi\leq \xi_0$, but also for $\xi_0<\xi<\xi_\tau$.


Let us split the interval of integration of the non-local operator at this $\xi_0$, 
which gives, as before:
\begin{equation}\label{linzed:eq}
\tau \phi''_\tau + \DD^\alpha_{\xi_0}[\phi_\tau] - h_c' \phi_\tau = \tilde{h}(\phi_\tau) - h_c' \phi_\tau - d_\alpha \int_{-\infty}^{\xi_0}\frac{\phi'_\tau(y)}{(\xi-y)^\alpha} \, dy\,,
\end{equation}
where we have used again the notation, 
\[
\DD^\alpha_{\xi_0}[g] := d_\alpha \int_{\xi_0}^\xi\frac{g'(y)}{(\xi-y)^\alpha} \, dy.
\]
And the translation of the independent variable is done by means of $\eta=\xi-\xi_0$ and, as earlier, we introduce the dependent variable 
\begin{equation}\label{Phi:phic:def}
\Phi_\tau(\eta)=\phi_\tau(\xi) -\phi_c.
\end{equation}
Applying these changes of variables in (\ref{linzed:eq}), give the equation:
\begin{equation}\label{TWP:eta}
 \tau \Phi''_\tau(\eta) + \DD^\alpha_0\left[\Phi_\tau(\eta)\right] - h_c' \Phi_\tau(\eta) = Q(\eta)
\end{equation}
where $Q$ is defined, similarly as before, by means of
\begin{equation}\label{lin:phic2}
Q(\eta):= \tilde{h}(\phi_\tau(\eta + \xi_0)) -h_c'\Phi_\tau(\eta) - d_\alpha \int_{-\infty}^{0}{\frac{\Phi'_\tau(z)}{(\eta-z)^\alpha}\, dz}.
\end{equation}

Again, we can implicitly write the solution to (\ref{TWP:eta})-(\ref{lin:phic2}), by Appendix~\ref{appendix:v} and, e.g., \cite{BK}. This gives:
\begin{equation}\label{Phi:Cap}
\Phi_\tau(\eta) = \Phi_\tau(0^+)v(\eta) + \frac{\tau}{h_c'} \Phi'_\tau(0^+)v'(\eta) + \frac{1}{h_c'} \int_{0}^{\eta}{v'(y) Q(\eta-y)\, dy} 
\end{equation}
where $v$ is the solution to the homogeneous equation
\begin{equation}\label{linear:eq:sol}
\tau v'' +\DD_0^\alpha[v]-h_c' v=0 \quad \mbox{with} \quad v(0)=1, \ v'(0)=0.
\end{equation}

For simplicity of notation, we shall not write the dependency of $v$ and $Q$ on $\tau$ throughout this section, since essentially $\tau$ is fixed.

We recall that the properties and behaviour of $v$ and its derivatives are given in the Appendix~\ref{v:monotone}, Lemma~\ref{v:bhv:tau:small}. Observe that $v$ is the same for both linearisations of the associated problems (\ref{TWP}) and (\ref{TWP:modified}) at $\phi = \phi_c$.


We shall need the following lemma:
\begin{lem}\label{basic:ests}
  If $\Phi'_\tau(\eta)<0$ in the interval $(-\infty,0)$, then for $\eta>0$:
  \begin{equation}\label{eta:large}
  \int_{-\infty}^{0} \frac{|\Phi_\tau'(z)|}{(\eta-z)^\alpha} \, dz  \leq
  \frac{C}{\eta^{\alpha+1}} + \frac{C'\left|\Phi_\tau(\eta)\right|}{\eta^\alpha}
  \end{equation}
  and 
\begin{equation}\label{eta:small}
  \int_{-\infty}^{0}\frac{|\Phi_\tau'(z)|}{(\eta-z)^\alpha} \, dz
  \leq \frac{C}{1+ \eta^\alpha}.
\end{equation}
Moreover, for the modified problem (\ref{TWP:modified})-(\ref{TW:m-c}), we have the following upper and lower bounds, there exists $C_h \geq 0$, such that
\begin{equation}\label{Q:upper:basic}
 Q(\eta) 
 \geq -\frac{C_h}{2} \Phi_\tau^2(\eta) +  d_\alpha \int_{-M}^{0} \frac{(-\Phi_\tau'(z))}{(\eta-z)^\alpha} \, dz 
\end{equation}
for any $0<M<\infty$, and
\begin{equation}\label{Q:lower:basic}
 Q(\eta)\leq  \frac{C_h}{2}  \ \Phi_\tau^2(\eta) + d_\alpha \int_{-\infty}^{0} \frac{(-\Phi_\tau'(z))}{(\eta-z)^\alpha} \, dz \leq \frac{C_h}{2}  \ \Phi_\tau^2(\eta) -\DD^\alpha[\Phi_\tau](0^+).   
 \end{equation}

\end{lem}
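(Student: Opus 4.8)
The plan is to treat the four inequalities in order, all of them by direct manipulation using the hypothesis $-\Phi_\tau'>0$ on $(-\infty,0)$ together with two properties of the profile $\phi_\tau$ inherited from its construction in Theorem~\ref{existence}(i): that $\Phi_\tau'$ decays exponentially as $\eta\to-\infty$ (so the moments $\int_{-\infty}^{0}(-z)^{-\alpha}|\Phi_\tau'(z)|\,dz$ and $\int_{-\infty}^{0}(-z)|\Phi_\tau'(z)|\,dz$ are finite), and that $\int_{-\infty}^{0}|\Phi_\tau'(z)|\,dz=\Phi_\tau(-\infty)-\Phi_\tau(0)<\infty$.

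For \eqref{eta:small} I would separate $0<\eta\le 1$ from $\eta\ge 1$. On $\{\eta\le 1\}$, since $\eta-z\ge-z>0$ for $z<0$, one has $(\eta-z)^{-\alpha}\le(-z)^{-\alpha}$, and $\int_{-\infty}^{0}(-z)^{-\alpha}|\Phi_\tau'(z)|\,dz$ converges ($\alpha<1$ controls the singularity at $z=0$, exponential decay controls $z\to-\infty$), giving an absolute constant; on $\{\eta\ge 1\}$, $(\eta-z)^{-\alpha}\le\eta^{-\alpha}$ and one factors out $\int_{-\infty}^{0}|\Phi_\tau'(z)|\,dz$. Combining the two regimes yields the stated bound $C/(1+\eta^\alpha)$.

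For \eqref{eta:large} I would use the identity $(\eta-z)^{-\alpha}=\eta\,(\eta-z)^{-\alpha-1}+(-z)\,(\eta-z)^{-\alpha-1}$ to split the integral into two pieces. The piece carrying the factor $(-z)$ is bounded at once by $(\eta-z)^{-\alpha-1}\le\eta^{-\alpha-1}$ together with the finite first moment of $|\Phi_\tau'|$, producing the $C\eta^{-\alpha-1}$ term. For the piece carrying the factor $\eta$, I would integrate by parts to pass from $\Phi_\tau'$ to $\Phi_\tau$, and then use the monotone representation $\Phi_\tau(z)=\Phi_\tau(\eta)+\int_z^{\eta}(-\Phi_\tau'(y))\,dy$ — legitimate on the range of $\eta$ where $\Phi_\tau$ is still decreasing, which by Theorem~\ref{mono:tau:small} is an interval of length $O(\tau^{-1/(2-\alpha)})$ — to peel off the constant contribution $\Phi_\tau(\eta)\,\eta^{-\alpha}$ from the (lower-order) variation term. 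I expect this last step to be the main obstacle: extracting the sharp $|\Phi_\tau(\eta)|$ weight, rather than the crude $\eta^{-\alpha}$ bound that the total variation of $\Phi_\tau$ gives immediately, forces a careful split of the domain near $z=0$ versus far, and is the only place where the monotonicity hypothesis genuinely enters.

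For the two bounds on $Q$ I would start from the definition \eqref{lin:phic2}. Writing $\phi_\tau(\eta+\xi_0)=\phi_c+\Phi_\tau(\eta)$ and Taylor-expanding $\tilde h$ about $\phi_c$, using $\tilde h(\phi_c)=0$ and $\tilde h'(\phi_c)=h_c'$, gives $\tilde h(\phi_\tau(\eta+\xi_0))-h_c'\Phi_\tau(\eta)=\tfrac12\tilde h''(\theta)\,\Phi_\tau(\eta)^2$ for some $\theta$ between $\phi_c$ and $\phi_\tau(\eta+\xi_0)$; since solutions of \eqref{TWP:modified}--\eqref{TW:m-c} stay in the compact interval $[\bar\phi,\phi_-]$ on which $\tilde h$ is $C^2$, this term lies between $\mp\tfrac12 C_h\,\Phi_\tau(\eta)^2$ with $C_h\ge\sup_{[\bar\phi,\phi_-]}|\tilde h''|\ge 0$. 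The remaining term of \eqref{lin:phic2} equals $d_\alpha\int_{-\infty}^{0}|\Phi_\tau'(z)|(\eta-z)^{-\alpha}\,dz\ge 0$; discarding the part outside $(-M,0)$ (the integrand being nonnegative) gives \eqref{Q:upper:basic} for any $M\in(0,\infty)$, while retaining it and comparing kernels via $(\eta-z)^{-\alpha}\le(-z)^{-\alpha}$ for $\eta>0$, $z<0$, identifies it with the upper bound $-\DD^\alpha[\Phi_\tau](0^+)$, giving \eqref{Q:lower:basic}.
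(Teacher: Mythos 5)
Your treatment of \eqref{eta:small}, \eqref{Q:upper:basic} and \eqref{Q:lower:basic} is correct and is essentially the paper's own argument: Taylor's theorem for $\tilde h$ centred at $\phi_c$ combined with the confinement of solutions of \eqref{TWP:modified}--\eqref{TW:m-c} to $[\bar\phi,\phi_-]$ (so that $C_h=\max_{[\bar\phi,\phi_-]}|\tilde h''|$), the nonnegativity of the history integral, and the kernel comparisons $(\eta-z)^{-\alpha}\le(-z)^{-\alpha}$ and $(\eta-z)^{-\alpha}\le\eta^{-\alpha}$ for $z<0<\eta$; for \eqref{eta:small} the paper only refers to \cite{DC}, and your two-regime argument reproduces what is needed.

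The gap is \eqref{eta:large}, precisely at the step you flag and then leave unproved. The left-hand side involves $\Phi_\tau'$ only on $(-\infty,0)$, and since $-\Phi_\tau'\ge 0$ there and is integrable, dominated convergence gives
\[
\eta^{\alpha}\int_{-\infty}^{0}\frac{|\Phi_\tau'(z)|}{(\eta-z)^{\alpha}}\,dz\;\longrightarrow\;\int_{-\infty}^{0}|\Phi_\tau'(z)|\,dz\;=\;(\phi_--\phi_c)-\Phi_\tau(0)\qquad\mbox{as}\quad\eta\to\infty,
\]
so the coefficient of $\eta^{-\alpha}$ that any manipulation of these data alone can produce is the full drop of $\Phi_\tau$ over $(-\infty,0)$, an order-one quantity in the situations where the lemma is applied. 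Concretely, after your kernel splitting and integration by parts the remaining ``variation term'' $(\alpha+1)\,\eta\int_{-\infty}^{0}\bigl(\Phi_\tau(z)-\Phi_\tau(\eta)\bigr)(\eta-z)^{-\alpha-2}\,dz$ is not lower order: estimating $\Phi_\tau(z)-\Phi_\tau(\eta)$ by the total variation returns exactly $\bigl((\phi_--\phi_c)-\Phi_\tau(0)\bigr)\eta^{-\alpha}$, and no split of the domain near $z=0$ versus far can convert this into $C\eta^{-\alpha-1}+C'|\Phi_\tau(\eta)|\eta^{-\alpha}$ in the regime where $|\Phi_\tau(\eta)|$ is small compared with that drop; monotonicity on $(0,\eta)$ does not help either, since it does not enter the left-hand side at all. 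Thus \eqref{eta:large} cannot be obtained from the hypotheses you invoke (monotonicity on $(-\infty,0)$, exponential decay, finite moments) by the peeling argument you sketch; some additional structure tying the mass of $|\Phi_\tau'|$ on $(-\infty,0)$, i.e.\ the choice and normalisation of the splitting point $\xi_0$, to the quantities on the right-hand side has to enter. The paper gives no self-contained proof of this inequality either --- it defers to the corresponding estimate of \cite{DC} --- so to complete your proposal you would need to import that statement in its precise form rather than argue as you propose; as written, your proof establishes \eqref{eta:small} and the two $Q$-bounds but leaves \eqref{eta:large} open.
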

\begin{proof}
  Observe that (\ref{eta:large}) and (\ref{eta:small}) are obtained as in
  \cite{DC}. The last estimate might be used for small values of $\eta$,
  and the first one for moderate or large values of $\eta$. The constants $C$
  in both estimates are at most of order one, but we cannot guarantee that they
  are small.

  The last two inequalities (\ref{Q:upper:basic}) and (\ref{Q:lower:basic}),
  simply follow by applying Taylor's theorem to $\tilde{h}(\phi)$ centred at
  $\phi_c$, since there exists, for each $\eta>0$, $\tilde{\phi}_\eta \in
  [\inf_{\xi\in\R}\phi_\tau,\phi_-)$, such that 
\begin{equation}\label{h:taylor}
  \tilde{h}(\phi_\tau(\eta+\xi_\delta)) - h_c' \Phi_\tau(\eta)=
  \frac{\tilde{h}''(\tilde{\phi}_\eta)}{2} (\Phi_\tau(\eta))^2.
\end{equation}
We recall that solutions of (\ref{TWP:modified})-(\ref{TW:m-c}) lie in $(\bar\phi,\phi_-)$, where $\bar \phi<\phi_-$ is the other zero of (\ref{H:tilde:conditon}). Thus $\inf_{\xi\in\R}\phi_\tau(\xi) \geq \bar \phi$. Then $|\tilde{h}''(\tilde{\phi}_\eta)|\leq  \max_{\phi\in[\bar\phi,\phi_-]}|\tilde{h}''(\phi)|=:C_h$.
\end{proof}

\begin{proof}[Proof of Theorem~\ref{mono:tau:small:tail}]
 We recall that $\lim_{\xi\to\infty}\phi_\tau(\xi)=\phi_c$ and using the information from Theorem~\ref{mono:tau:small}, we can take $\xi_0<\xi_\tau$, where $\xi_\tau$ is given in this theorem, such that for $\tau>0$ sufficiently small, and since $\phi'<0$ in $(-\infty,\xi_\tau)$, $\phi(\xi_0)$, $\phi'(\xi_0)$ are of $O(1)$ if not smaller as $\tau\to 0^+$, and certainly we have
\begin{equation}\label{delta:def}
\phi_\tau(\xi_0) - \phi_c>0.
\end{equation}

We shall need to estimate $\Phi_\tau'$ in (\ref{Phi:phic:def}), for which we shall then use the equation
\begin{equation}\label{Phi:prime}
  \Phi'_\tau(\eta) = \Phi_\tau(0^+)v'(\eta)
  + \frac{\tau}{h_c'} \Phi'_\tau(0^+)v''(\eta)
  + \frac{1}{h_c'} \int_{0}^{\eta}v''(y) Q(\eta - y)\,dy,
\end{equation}
that results from differentiating (\ref{Phi:Cap}) and using that $v'(0)=0$ by (\ref{linear:eq:sol}). 

We will use the results of Lemma~\ref{v:bhv:tau:small}. In particular, we know that $v'(\eta)<0$ for all $\eta$ if $\tau$ is sufficiently small, that $v''(\eta)<0$, holds for $\tau$ sufficiently small and for $\eta\leq \eta_{inflex}$ with $\eta_{inflex}=O(\tau^{\frac{1}{2-\alpha}})$ and that $v''(\eta)$ is non-negative otherwise, see (\ref{v'':sign}).

{\bf STEP 1:}  Let $\eta:=\xi - \xi_0$, which we consider to be large. In the first step, we need to obtain a bound for $|\Phi_\tau'(\eta)\eta_{inflex}|$ for $\eta>\eta_{inflex}$ when $\tau$ is sufficiently small. Therefore, we consider (\ref{Phi:prime}) and using the estimates of Lemma~\ref{v:bhv:tau:small}, ((\ref{v'':sign}) in the integral part, and (\ref{bh:pass:inflex})-(\ref{bh:pass:inflex:vpp}) for the other two terms), and the boundedness of $Q$, which follows from (\ref{Q:lower:basic}). Then, we get for $\eta>\eta_{inflex}$ and $\tau$ sufficiently small
\[
\begin{split}
\left| \Phi_\tau'(\eta)\right| \leq& \left|\Phi_\tau(0^+)v'(\eta) + \frac{\tau}{h_c'} \Phi'_\tau(0^+)v''(\eta)\right| + \frac{1}{|h_c'|}\int_{0}^{\eta}|v''(y)| |Q(\eta - y)|\,dy \\
\leq& \frac{K(\tau)}{\eta^{\alpha+1}} + C\tau
+ \frac{C'}{|h_c'|}\left(\int_{0}^{\eta_{inflex}}(-v''(y))\,dy + \int_{\eta_{inflex}}^{\eta}v''(y)\,dy\right) \\
\lesssim &\frac{K(\tau)}{\eta^{\alpha+1}} + C\tau + \frac{2 C'}{|h_c'|}(-v'(\eta_{inflex}))
\lesssim \frac{\eta_{inflex}}{\tau} \sim \tau^{\frac{\alpha-1}{2-\alpha}}.
\end{split}
\]
Here we are applying the knowledge we have on the sign of $v''$ and we are interested in obtaining a bound of the derivative depending on $\tau$. Notice that this bound, despite having a negative exponent, is better than the one we obtain from the energy estimate (\ref{energy:form}) which is of order $\tau^{-1/2}$ for $\tau$ sufficiently small.

Consequently, we get the following estimate, which we will need below, for $\eta>\eta_{inflex}$
\begin{equation}\label{Phi:bound:tau}
|\Phi_\tau'(\eta)| \, \eta_{inflex} \lesssim \tau^{\frac{\alpha}{2-\alpha}} \quad \mbox{as} \quad \tau \to 0^+ \, .
\end{equation}

{\bf STEP 2:} In this step we take advantage of the behaviour of $v'$ near $\eta_{inflex}$ for $\tau$ sufficiently small. We  prove that $v'$ behaves as an approximation of the Dirac delta distribution at $\eta_{inflex}$ and then we can approximate $\int_{0}^{\eta} v'(y) Q(\eta-y) \, dy$ by $Q(\eta-\eta_{inflex})$ for $\tau$ sufficiently small. In fact, in this step we prove that for $\tau$ sufficiently small, there exist an order one constant $C>0$ such that
\begin{equation}\label{Q:approx}
  \left| \frac{1}{h_c'} \int_{0}^{\eta} v'(y) Q(\eta-y) \, dy- Q(\eta-\eta_{inflex})\right| < C \tau^{\frac{\alpha}{2-\alpha}}\, .
  \end{equation}

First, we compute the maximum of $(-v')$, which is attained at the inflection point $\eta_{inflex}$. Evaluating $(-v')$ at $\eta_{inflex}$, and using Lemma~\ref{v:bhv:tau:small} (\ref{v:prime:eta0:corr}), we get
\[
0<(-v'(\eta_{inflex})) \sim \frac{|h'_c|}{\tau} \eta_{inflex} - \frac{1}{(3-\alpha)(2-\alpha)}\frac{|h'_c|}{\tau^2} \eta_{inflex}^{3-\alpha} \sim \tau^{\frac{\alpha-1}{2-\alpha}}\,,
\]
as $\tau \to 0^+$. Notice that for $0<\alpha<1$, the previous exponent is negative, implying that this maximum tends to $+\infty$ as $\tau \to 0^+$ and the inflection point $\eta_{inflex}$ approaches $0$.

Next, we derive the rescaling by first translating the original variable $\eta$ by $\eta_{inflex}$ and considering the behaviour of $v'$ (\ref{v:prime:eta0:corr}) for small values of $\tau>0$, in particular, for $\tau\leq\eta\leq \eta_{inflex}$. Using the asymptotic behaviour of $v'$ as $\tau \to 0^+$, we have for very small values of $\tau$
\begin{equation}\label{rescale:vp}
\begin{split}
&(-v'(\eta))\sim \frac{|h'_c|}{\tau} \eta - \frac{1}{(3-\alpha)(2-\alpha)}\frac{|h'_c|}{\tau^2} \eta^{3-\alpha}\\
\sim& \frac{|h'_c|}{\tau} \eta_{inflex} - \frac{1}{(3-\alpha)(2-\alpha)}\frac{|h'_c|}{\tau^2} \eta_{inflex}^{3-\alpha} + \left( \frac{|h'_c|}{\tau} - \frac{1}{2-\alpha}\frac{|h'_c|}{\tau^2} \eta_{inflex}^{2-\alpha}  \right) (\eta - \eta_{inflex}) \\
\sim& C_1 \tau^{\frac{\alpha-1}{2-\alpha}} + C_2 \frac{1}{\tau} (\eta -\eta_{inflex}) = \tau^{\frac{\alpha-1}{2-\alpha}} \left(C_1  + C_2 \frac{\eta - \eta_{inflex}}{\tau^{\frac{1}{2-\alpha}}}\right), 
\end{split}
\end{equation}
such that $\eta \in (\eta_{inflex}-\varepsilon, \eta_{inflex}+\varepsilon)$ for some small $\varepsilon>0$.

Let us define, for convenience, the following function
\[
\tilde{v}(\eta-\eta_{inflex}):= (-v'(\eta)),
\]
then we observe that:
\begin{equation}\label{vp:int:1}
  1=\int_{0}^{\infty} (-v'(\eta)) \, d\eta = \int_{0}^{\infty} \tilde{v}(\eta-\eta_{inflex}) \, d\eta.
\end{equation}

Now we use (\ref{rescale:vp}) to motivate the following definition (or rescaling of $-v'$):
\[
(-v'(\eta)) = \frac{1}{\tau^{\frac{1-\alpha}{2-\alpha}}} \omega\left( \frac{\eta - \eta_{inflex}}{\tau^{\frac{1}{2-\alpha}}} \right)= \omega_\tau(\eta-\eta_{inflex})\, ,
\]
such that $\eta \in (\eta_{inflex}-\varepsilon, \eta_{inflex}+\varepsilon)$ for some small $\varepsilon>0$.

In order to analyse the mollification, we now consider the expression,
\[
\begin{split}
 &\mathcal{Q}(\eta) := \frac{1}{h'_c}\int_{0}^{\eta} v'(y)Q(\eta-y) \, dy =
  \frac{1}{|h'_c|}\int_{0}^{\eta} (-v'(y)) Q(\eta-y) \, dy \\
=& \frac{1}{|h'_c|}\int_0^\infty \tilde{v}(y- \eta_{inflex})  Q(\eta-y) \chi_{[0,\eta]}(y) \, dy =  \int_0^\infty \tilde{v}(y- \eta_{inflex})  \overline{Q}_\eta(\eta-y)   \, dy,
\end{split}
\]
where $\overline{Q}_\eta (z):=  \frac{1}{|h'_c|}Q(z) \chi_{[0,\eta]}(\eta-z)$.

Thus, we integrate at least over $(0,\infty)$ to apply (\ref{vp:int:1})
\begin{equation}\label{prop:norm:1}
\overline{Q}_\eta(z) = \int_{0}^{\infty} \tilde{v} (y-\eta_{inflex}) \overline{Q}_\eta(z) \, dy \quad \mbox{for} \quad  z\in  \R.
\end{equation}
Since $Q$ and, hence, $\overline{Q}_\eta$ are locally integrable, Lebesgue's Differentiation Theorem implies that
\[
\lim_{r \to 0} \frac{1}{2r}  \int_{z-r}^{z + r}
  |\overline{Q}_\eta(y)- \overline{Q}_\eta(z)| \, dy = 0
\]
for almost every $z\in \R$. In particular, note that the previous identity holds for $Q$ as well.

Now, for a fixed $\eta>0$, $\tau>0$ small enough and $r \lesssim \frac{\tau^{\frac{1}{2-\alpha}}}{2}$, applying the change of variable $z=\eta - y$ and (\ref{prop:norm:1}), we get 
\[
\begin{split}
&|\mathcal{Q}(\eta)- \overline{Q}_\eta(\eta-\eta_{inflex})| = \left| \int_{-\infty}^{\eta}\tilde{v}(\eta-(z+\eta_{inflex})) (\overline{Q}_\eta(z) - \overline{Q}_\eta(\eta-\eta_{inflex})) \, dz\right| \\ 
=& \left| \int_{B(\eta-\eta_{inflex}, r)} \omega_\tau(\eta-(z+\eta_{inflex})) (\overline{Q}_\eta(z) - \overline{Q}_\eta(\eta-\eta_{inflex})) \, dz\right.  \\ 
&\left.+ \int_{B(\eta-\eta_{inflex}, r)^c}  \tilde{v}(\eta-(z+\eta_{inflex})) (\overline{Q}_\eta(z) - \overline{Q}_\eta(\eta-\eta_{inflex})) \, dz\right|\, .
\end{split}
\]
On the one hand, by applying the inequality $\tau^{\frac{-1}{2-\alpha}} r \lesssim 1/2$ and the Lebesgue's Differentiation Theorem, we obtain the following estimate for a constant $C>0$,
\begin{equation}\label{int:conv:b}
\begin{split}
& \left| \int_{B(\eta-\eta_{inflex}, r)} \omega_\tau(\eta-(z+\eta_{inflex})) (\overline{Q}_\eta(z) - \overline{Q}_\eta(\eta-\eta_{inflex})) \, dz\right| \\
\leq& \frac{1}{\tau^{\frac{1-\alpha}{2-\alpha}}}\int_{\eta-\eta_{inflex}-r}^{\eta-\eta_{inflex}+r} \omega\left(\frac{\eta-(z+\eta_{inflex})}{\tau^{\frac{1}{2-\alpha}}}\right) |\overline{Q}_\eta(z) - \overline{Q}_\eta(\eta- \eta_{inflex})| \, dz \\
\leq& C \tau^{\frac{\alpha-1}{2-\alpha}} r \left(
\frac{1}{2r} \int_{B(\eta-\eta_{inflex}, r)} |\overline{Q}_\eta(z)- \overline{Q}_\eta(\eta- \eta_{inflex})| \, dz \right)
  \\
  \leq& \frac{C}{2} \tau^{\frac{\alpha}{2-\alpha}}
 \left(
\frac{1}{2r} \int_{B(\eta-\eta_{inflex}, r)} |\overline{Q}_\eta(z)- \overline{Q}_\eta(\eta- \eta_{inflex})| \, dz \right)\rightarrow 0 \quad \mbox{as} \ r \to 0^+.
\end{split}
\end{equation}

On the other hand, we analyse the complementary part as follows. First, we determine the complementary of $B(\eta-\eta_{inflex}, r)$ and work on the original variable of integration, we do the change of variable $y=\eta- z$. Then, in the first inequality we apply the estimates on the behaviour of $v'$ (\ref{bh:pass:inflex}) and (\ref{v:prime:eta0:corr}). Finally, we can apply the uniform bound for $Q$ derived from the inequality (\ref{Q:lower:basic}), since $\Phi_\tau$ is uniformly bounded in this scenario, and yield the following estimate for some positive constants $C$ and $C'$: 

\begin{equation}\label{int:conv:c}
\begin{split}
& \left| \int_{B(\eta-\eta_{inflex}, r)^c} \tilde{v}(\eta-(z+\eta_{inflex})) (\overline{Q}_\eta(z) - \overline{Q}_\eta(\eta-\eta_{inflex})) \, dz\right| \\
=& \left| \int_{-\infty}^{\eta-\eta_{inflex} - r} + \int_{\eta-\eta_{inflex} +r}^\eta \tilde{v}(\eta-(z+\eta_{inflex})) (\overline{Q}_\eta(z) - \overline{Q}_\eta(\eta-\eta_{inflex})) \, dz \right| \\
=& \left| \int^{\infty}_{\eta_{inflex} + r} + \int^{\eta_{inflex}-r}_0 \tilde{v}(y -\eta_{inflex}) (\overline{Q}_\eta(\eta-y) - \overline{Q}_\eta(\eta-\eta_{inflex})) \, dy \right| \\
\leq& \int^{\infty}_{\eta_{inflex} + r}   \frac{K'(\tau)}{y^{\alpha+1}}\,  |\overline{Q}_\eta(\eta-y) - \overline{Q}_\eta(\eta-\eta_{inflex})| \, dy \\
&+ \tilde{C} \int^{\eta_{inflex} -r}_0  \frac{y}{\tau} \, |\overline{Q}_\eta(\eta-y) - \overline{Q}_\eta(\eta-\eta_{inflex})| \, dy \\
\leq& C_1 \frac{K'(\tau)}{(\eta_{inflex} +r)^\alpha}  + C_2  \frac{(\eta_{inflex}-r)^2}{\tau}
\leq
C \frac{\tau^{\frac{2\alpha}{2-\alpha}}}{\tau^{\frac{\alpha}{2-\alpha}}}  + C'  \frac{\tau^{\frac{2}{2-\alpha}}}{\tau} \sim \tau^{\frac{\alpha}{2-\alpha}}  \quad \mbox{as} \quad  r \to 0^+\, .
\end{split}
\end{equation}
Consequently, combining (\ref{int:conv:b}) and (\ref{int:conv:c}) gives (\ref{Q:approx}).

{\bf STEP 3:} In this step we use the previous steps to estimate (\ref{Phi:Cap}) from below and from above for $\eta\gtrsim \tau^{-1/(2-\alpha)}$ for $\tau$ sufficiently small.

Recall that $\eta=\xi - \xi_0$ and note that $\xi_0<\xi_{\tau}$ is not necessarily large. Consequently, at $\eta=0$, $\Phi_\tau$ and $\Phi_\tau'$ are not necessarily small. However, by Theorem~\ref{mono:tau:small}, $\Phi_\tau$ is decreasing up to an $\eta_\tau= \xi_\tau - \xi_0=O(\tau^{-1/(2-\alpha)})$ for $\tau>0$ sufficiently small.

Using equation (\ref{Q:approx}), we establish the bound  
\begin{equation}\label{VOC:int:bound}
 Q(\eta - \eta_{inflex}) - C\tau^{\frac{\alpha}{2-\alpha}} \leq \frac{1}{h'_c}\int_{0}^{\eta} Q(\eta-y) v'(y) \, dy \leq  Q(\eta-\eta_{inflex}) + C\tau^{\frac{\alpha}{2-\alpha}} 
\end{equation}
for $\eta > \eta_{inflex}$.

Next, we consider the behaviour of $v$ and $v'$ for small values of $\tau$, (\ref{bh:pass:inflex}), to yield
\begin{equation}\label{f:bhv}
  f(\eta)=\Phi_\tau(0^+)v(\eta) + \frac{\tau}{h_c'} \Phi'_\tau(0^+)v'(\eta)\sim \frac{C_\tau}{\eta^\alpha}
\end{equation}
for $\eta>\eta_{inflex}$, where $C_\tau = O\left( \tau^{\frac{2\alpha}{2-\alpha}}\right)$ as $\tau \to 0^+$. Applying the previous estimates (\ref{VOC:int:bound})-(\ref{f:bhv}) to the variation of constants formulation (\ref{Phi:Cap}), we get
\[
f(\eta) + Q(\eta - \eta_{inflex}) - C\tau^{\frac{\alpha}{2-\alpha}} \leq \Phi_\tau (\eta) \leq f(\eta) + Q(\eta - \eta_{inflex}) + C\tau^{\frac{\alpha}{2-\alpha}}.
\]
Applying the Mean Value Theorem to $\Phi_\tau$ on the interval $[\eta-\eta_{inflex},\eta]$, there exists $\tilde{\eta} \in (\eta-\eta_{inflex},\eta)$ such that
\[
f(\eta) + Q(\eta - \eta_{inflex}) - C\tau^{\frac{\alpha}{2-\alpha}} \leq \Phi_\tau (\eta-\eta_{inflex}) - \Phi_\tau'(\tilde{\eta})\eta_{inflex} 
\leq f(\eta) + Q(\eta - \eta_{inflex}) + C\tau^{\frac{\alpha}{2-\alpha}}.
\]
Using the bound (\ref{Phi:bound:tau}), we obtain
\begin{equation}\label{est:parabola:1}
f(\eta) + Q(\eta - \eta_{inflex}) - C\tau^{\frac{\alpha}{2-\alpha}} \leq \Phi_\tau (\eta-\eta_{inflex}) \leq f(\eta) + Q(\eta - \eta_{inflex}) + C\tau^{\frac{\alpha}{2-\alpha}},
\end{equation}
for $\tau$ sufficiently small.

Before we get bounds on the term $Q(\eta-\eta_{inflex})$, we recall its definition:
\[
\begin{split}
Q(\eta -\eta_{inflex}) = & \, \tilde{h}\left( \Phi_\tau(\eta-\eta_{inflex}) + \phi_c\right) - h_c' \Phi_\tau(\eta-\eta_{inflex}) \\ &+ d_\alpha \int_{-\infty}^{0} \frac{(-\Phi_\tau'(z))}{(\eta-\eta_{inflex} -z)^\alpha} \, dz \, ,
\end{split}
\]
here the integral term is positive since $\Phi_\tau$ is initially decreasing. For large $\eta$ and a fixed $M>0$, we use Lemma~\ref{basic:ests} to get the estimate
\begin{equation}\label{bound:frac}
0 < \frac{C'}{(\eta-\eta_{inflex}+M)^\alpha}\leq d_\alpha \int_{-\infty}^{0} \frac{(-\Phi_\tau'(z))}{(\eta-\eta_{inflex} -z)^\alpha} \, dz \leq \frac{C}{(\eta-\eta_{inflex})^\alpha}\, .
\end{equation}
where $C= (-\Phi_\tau(0^+) + \phi_-)$ and $C' = (-\Phi_\tau(0^+) +\Phi_\tau(M))$ are positive. These upper and lower bounds can be combined with the $f(\eta)$ term but the constants here are not necessarily small, however, note that we are considering $\eta$ large.

For the term involving $\tilde{h}$, there are two possibilities depending on its sign. Observe that $\tilde{h}$ is not convex. In fact, $h''(\phi)=0$ at $\phi=0$ and $\phi_c<0<\phi_0$. If $\tilde{h}\left( \Phi_\tau(\eta-\eta_{inflex}) + \phi_c\right) - h_c' \Phi_\tau(\eta-\eta_{inflex})\geq 0$, then $Q$ remains positive, but, since we know that $\Phi_\tau(\eta)\rightarrow 0$ as $\eta\to\infty$, there exists a larger $\eta$ such that $\tilde{h}\left( \Phi_\tau(\eta-\eta_{inflex}) + \phi_c\right) - h_c' \Phi_\tau(\eta-\eta_{inflex})\leq 0$. In particular, for $ \eta-\eta_{inflex}\sim \eta_\tau$ we get to this region: applying the inequality
\[
|\phi_\tau(\eta + \xi_0) - \phi_0(\eta + \xi_0)| \lesssim \tau \, , \quad \mbox{for} \quad \eta \lesssim \eta_\tau \, ,
\]
from Theorem~\ref{mono:tau:small}, we derive that for sufficiently small $\tau$ and $\eta$ large enough (but with $\eta\lesssim\eta_\tau =O(\tau^{-1/(2-\alpha)})$),
\[
|\Phi_\tau(\eta)| \leq |\phi_\tau(\eta + \xi_0) - \phi_0(\eta + \xi_0)| + |\phi_0(\eta + \xi_0) - \phi_c| \lesssim \tau + \frac{1}{(\eta + \xi_0)^\alpha} \, .
\]
Thus, for $\tau$ sufficiently small, $\phi_\tau$ can be made arbitrarily close to $\phi_c$ with $\phi_ \tau$ decreasing.

For such $\eta$'s in a neighbourhood of $\eta_\tau$, we approximate this term by the quadratic expression
\begin{equation}\label{bound:h}
\tilde{h}\left( \Phi_\tau(\eta-\eta_{inflex}) + \phi_c\right) - h_c' \Phi_\tau(\eta-\eta_{inflex}) = \tilde{h}''(\tilde\phi) \Phi_\tau^2(\eta-\eta_{inflex}) \, ,
\end{equation}
then $Q$ can change sign and, in this case, $\tilde{h}''(\tilde\phi)<0$ (depends on $\eta$). Since we know that $\Phi_\tau$ is bounded then
\begin{equation}\label{bound:h''}
  -|\tilde{h}_{max}''| \leq \tilde{h}''(\tilde\phi) \leq -|\tilde{h}_{min}''|<0.
\end{equation}

Subsequently, using the previous estimates (\ref{bound:frac})-(\ref{bound:h''}) in (\ref{est:parabola:1}), we derive,
\[
F_2(\eta) - |\tilde{h}''_{max}| \Phi_\tau^2(\eta-\eta_{inflex})  - C\tau^{\frac{\alpha}{2-\alpha}} \leq \Phi_\tau (\eta-\eta_{inflex}) 
\leq F_1(\eta) - |\tilde{h}''_{min}| \Phi_\tau^2(\eta-\eta_{inflex})+ C\tau^{\frac{\alpha}{2-\alpha}},
\]
where 
\[
0<\frac{1}{\eta^\alpha}\sim F_2(\eta) \leq f(\eta) + d_\alpha \int_{-\infty}^{0} \frac{(-\Phi_\tau'(z))}{(\eta-\eta_{inflex} -z)^\alpha} \, dz \leq F_1(\eta)\sim \frac{1}{\eta^\alpha}.
\]
Hence, we conclude the following inequalities, rewriting as $X=\Phi_\tau (\eta-\eta_{inflex})$ and considering, by continuity in $\eta$, $\eta \gtrsim \tau^{-1/(2-\alpha)}$ (but not too away from $\eta_\tau$) to yield $\eta^{-\alpha} \lesssim \tau^{\alpha/(2-\alpha)}$,
\[
-\overline{C_2} \, \tau^{\frac{\alpha}{2-\alpha}} - C_2 X^2 \leq X \leq \overline{C_1} \, \tau^{\frac{\alpha}{2-\alpha}} - C_1 X^2 
\]
for $C_1, C_2, \overline{C_1}$ and $\overline{C_2}$ positive constants. Hence, if we analyse both inequalities and if $\tau$ is sufficiently small, so that the term $\overline{C_2} \, \tau^{\frac{\alpha}{2-\alpha}}$ is small, then both parabolas have real roots and since $X$ is positive initially, we obtain the following inequalities,
\begin{equation}\label{great:estimate}
\frac{-1 + \sqrt{1- 4 \overline{C_2} \, \tau^{\frac{\alpha}{2-\alpha}} C_2 }}{2 C_2} \leq \Phi_\tau(\eta-\eta_{inflex}) \leq \frac{-1 + \sqrt{1+ 4 \overline{C_1} \, \tau^{\frac{\alpha}{2-\alpha}} C_1 }}{2 C_1} 
\end{equation}
for $\eta$ in a neighbourhood of $\eta_\tau$. We can extend the argument to all $\eta>\eta_\tau$ by continuity of $\eta$, because (\ref{great:estimate}) guarantees that we can extend the previous estimates: notice that the negative root tends to zero as $\tau \to 0^+$ and the same happens for the positive root, implying that $\Phi_\tau(\eta)$ remains close to $0$.

Therefore, this allows to control the possible oscillatory behaviour of $\Phi_\tau$ and, consequently, of $\phi_\tau$ for $\tau$ sufficiently small, ensuring that $\phi_\tau$ remains in the region where $h= \tilde{h}$. 

\end{proof}


For completeness we give the following result on the asymptotic behaviour of solutions as $\eta\to\infty$ for monotone solutions:
\begin{prop}
  If $\tau>0$ is sufficiently small and $\phi_\tau$ is decreasing then
  \[
 \lim_{\xi \to \infty}{|\phi_\tau(\xi) - \phi_c | \, \xi^\alpha} < +\infty.
 \]
\end{prop}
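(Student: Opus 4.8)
The plan is to treat this as the $\tau>0$ counterpart of the $\tau=0$ decay result of \cite{DC}, adapting that argument to the variation-of-constants representation around $\phi_c$. First I would fix $\xi_0$ as in Section~\ref{sec:control}, so that under the hypothesis that $\phi_\tau$ is decreasing one has $\phi_\tau'<0$ on $(-\infty,\xi_0)$; with $\eta=\xi-\xi_0$ and $\Phi_\tau(\eta)=\phi_\tau(\xi)-\phi_c$ the function $\Phi_\tau$ is positive, decreasing, and $\Phi_\tau(\eta)\to0$ as $\eta\to\infty$. I then work from (\ref{Phi:Cap})--(\ref{lin:phic2}), using from Appendix~\ref{appendix:v} the far-field asymptotics $v(\eta)\sim c_\alpha\eta^{-\alpha}$, $v'(\eta)\sim-\alpha c_\alpha\eta^{-1-\alpha}$ as $\eta\to\infty$ (Lemma~\ref{v:cont:behav}), and, for $\tau$ small, $v'(\eta)=O(\eta)$ as $\eta\to0^+$ together with $v'\le0$ (Lemma~\ref{v:bhv:tau:small}), whence $\|v'\|_{L^1(0,\infty)}=v(0^+)-\lim_{\eta\to\infty}v(\eta)=1$. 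Writing $Q=R_1+R_2$, where $R_1(\eta)=\tfrac12\tilde h''(\tilde\phi_\eta)\Phi_\tau^2(\eta)$ is the quadratic remainder of the Taylor expansion (\ref{h:taylor}), so $|R_1|\le C\Phi_\tau^2$, and $R_2(\eta)=d_\alpha\int_{-\infty}^0(-\Phi_\tau'(z))(\eta-z)^{-\alpha}\,dz$ is the nonlocal tail term, I note that $\int_{-\infty}^0(-\Phi_\tau'(z))\,dz=\phi_--\phi_\tau(\xi_0)<\infty$ and $\eta\le\eta-z$ for $z\le0$ yield the two-sided bound $C'(\eta+M)^{-\alpha}\le R_2(\eta)\le C\eta^{-\alpha}$ for large $\eta$ (a cruder form of (\ref{eta:large})). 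Thus $R_2\asymp\eta^{-\alpha}$ carries the expected rate, and the first two terms of (\ref{Phi:Cap}) are $O(\eta^{-\alpha})$ by the $v,v'$ asymptotics and the finiteness of $\Phi_\tau(0^+),\Phi_\tau'(0^+)$.

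The core tool is the convolution estimate: for bounded $\psi$ with $\psi(s)=O(s^{-\alpha})$ as $s\to\infty$, one has $\int_0^\eta|v'(y)|\,|\psi(\eta-y)|\,dy=O(\eta^{-\alpha})$, proved by splitting at $\eta/2$, using $\|v'\|_{L^1}<\infty$ with $|\psi(\eta-y)|\le C\eta^{-\alpha}$ on $(0,\eta/2)$, and $|v'(y)|\le C\eta^{-1-\alpha}$ with $\int_0^{\eta/2}|\psi|=O(\eta^{1-\alpha})$ on $(\eta/2,\eta)$ (which gives an $O(\eta^{-2\alpha})$ remainder). Applied with $\psi=R_2$, this shows the $R_2$-part of the integral in (\ref{Phi:Cap}) is $O(\eta^{-\alpha})$, so for $\eta$ large
\[
|\Phi_\tau(\eta)|\le C\eta^{-\alpha}+\frac{1}{|h_c'|}\int_0^\eta|v'(y)|\,|R_1(\eta-y)|\,dy .
\]
To absorb the nonlinear term I run a bootstrap: given $\delta>0$, choose $\eta_\delta$ with $|\Phi_\tau|<\delta$ beyond it, so $|R_1(\eta)|\le C\delta|\Phi_\tau(\eta)|$ for $\eta>\eta_\delta$, while on the complementary compact part $R_1$ is merely bounded and the corresponding convolution piece is $O(\eta^{-1-\alpha})$ since there $|v'(y)|\le C\eta^{-1-\alpha}$. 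Putting $g(\eta)=(1+\eta)^\alpha|\Phi_\tau(\eta)|$, multiplying the display by $(1+\eta)^\alpha$, splitting the convolution at $\eta/2$ and at $\eta-\eta_\delta$, and using $(1+\eta)^\alpha(1+\eta-y)^{-\alpha}\le2^\alpha$ on $(0,\eta/2)$ together with the decay of $|v'|$ beyond $\eta/2$, I arrive at
\[
g(\eta)\le C+\frac{C\,2^\alpha\|v'\|_{L^1}}{|h_c'|}\,\delta\,\sup_{s\ge\eta/2}g(s)+o(1)\qquad\text{as }\eta\to\infty .
\]
Choosing $\delta$ so that the coefficient of the supremum is $<1$ (and first checking $g$ is bounded, by applying the inequality along an increasing sequence of intervals), taking $\limsup_{\eta\to\infty}$ gives $\limsup_{\eta\to\infty}g(\eta)<\infty$, i.e.\ $|\phi_\tau(\xi)-\phi_c|\,\xi^\alpha$ is bounded; feeding this bound back once more (now $R_1=O(\eta^{-2\alpha})=o(\eta^{-\alpha})$), together with the refined asymptotics of $v$, $v'$, $R_2$ and dominated convergence in the convolution, even produces the limit, exactly as in \cite{DC}.

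The main obstacle is closing this last bootstrap for the quadratic convolution term. The difficulty is twofold: $\Phi_\tau$ is small only in the tail while the convolution integrates it over all of $[0,\eta]$, including the $O(1)$-sized region near the origin; and the weight ratio $(1+\eta)^\alpha(1+\eta-y)^{-\alpha}$ degenerates as $y\to\eta$. Both are dealt with by the double splitting at $\eta/2$ and $\eta-\eta_\delta$ and the decay $|v'(y)|\le C\eta^{-1-\alpha}$ for $y\ge\eta/2$, but one must verify carefully that the surviving ``$\delta\times\sup g$'' term has coefficient strictly below $1$; this is exactly where the hypothesis that $\tau$ be small is used — through Theorems~\ref{mono:tau:small}--\ref{mono:tau:small:tail} it forces $\Phi_\tau$ to be uniformly close to $0$ in the tail, and through Lemma~\ref{v:bhv:tau:small} it gives $v'\le0$, so that $\|v'\|_{L^1}=1$.
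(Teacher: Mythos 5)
Your argument is correct, but it closes the estimate by a different mechanism than the paper. The paper's proof exploits signs and the small-$\tau$ structure of $v$: it observes that, for $\xi_0$ large, the quadratic remainder term $I_1$ is non-positive (concavity of $\tilde h$ between $\phi_c$ and $\phi_\tau(\xi_0)<0$, together with $v'<0$ and $h_c'<0$), so the nonlinear contribution is simply discarded; the tail term $I_2$ is then estimated with the refined bound (\ref{eta:large}), whose $|\Phi_\tau(\eta-R)|(\eta-R)^{-\alpha}$ part is absorbed into the left-hand side using $\int_0^R|v'|\lesssim R^2/\tau$ with the scale-dependent split $R=(\sigma\eta)^{\alpha/2}$, $\sigma<\tau^{1/\alpha}$; this yields the bound only for $1\ll\eta\ll\delta^{-2/\alpha}$ with $\delta=\Phi_\tau(0^+)$, and the restriction is removed by letting $\xi_0\to\infty$, and the same machinery also gives a matching lower bound $\Phi_\tau\gtrsim\eta^{-\alpha}$. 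You instead keep the quadratic term and absorb it through a weighted-supremum (Gronwall-type) bootstrap for $g(\eta)=(1+\eta)^\alpha|\Phi_\tau(\eta)|$, with small coefficient coming from $|\Phi_\tau|<\delta$ in the tail, while the nonlocal tail term is handled by the crude bound $R_2=O(\eta^{-\alpha})$ (boundedness near $0$ is (\ref{eta:small})) together with $\|v'\|_{L^1}=1$ and the far-field decay of $v'$ from Lemma~\ref{v:bhv:tau:small}. Your route needs no sign information on $\tilde h''$, no restriction $\eta\ll\delta^{-2/\alpha}$ and no limiting argument in $\xi_0$, and uses smallness of $\tau$ only through the qualitative properties of $v$; what it gives up is the two-sided rate (you only get the upper bound, which is all the proposition asks) and it relies on the finite-interval boundedness step you mention, which is indeed needed before taking the $\limsup$. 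One caveat: your closing remark that the scheme ``even produces the limit'' overstates what is shown — both your argument and the paper's establish boundedness of $|\phi_\tau(\xi)-\phi_c|\,\xi^\alpha$ (the sense in which the statement should be read), and extracting an actual limit would require sharper asymptotics of the convolution with $R_2$ that neither proof carries out.
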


\begin{rem}\label{remark}
  In the proof of this proposition we take $\xi_0$, the shift where we split the non-local term, very large. Then we can derive equation (\ref{TWP:eta})-(\ref{lin:phic2}) similarly and use the implicit formulation (\ref{Phi:Cap}). We shall use the same notation, but now since we are assuming that $\xi_0$ is very large and that $\phi_\tau$ decreases, then the corresponding initial value for $\Phi_\tau(0^+)$ is very small.
  
 Now, in this case, by Lemma~\ref{basic:ests} (\ref{Q:upper:basic}), as long as $\Phi_\tau$ decreases and stays positive ($\phi_\tau>\phi_c$) but close to $0$ (so $\tilde{h}(\phi_\tau)<0$, but small) then $Q$ is positive:
\begin{equation}\label{Q:positive}
Q(\eta) \geq -\frac{C_h}{2}  (\Phi_\tau(0^+))^2 +
d_\alpha\frac{ \Phi_\tau(-M)  -\Phi_\tau(0^+) }{(\eta+M)^\alpha}.
 \end{equation}
 The first term is dominated by the second if we take $M\geq \eta$ and large enough, but such that $M^\alpha\ll  (\Phi_\tau(0^+))^{-2}$, since we also have that $0<-\Phi_\tau(0^+) + \Phi_\tau(-M)=-\phi_\tau(\xi_0)+\phi_\tau(\xi_0-M)\leq \phi_- - \phi_c$. 
 \end{rem}

\begin{proof}
We fix $\tau$ sufficiently small such that Lemma~\ref{v:bhv:tau:small} of the Appendix~\ref{v:monotone} holds, in particular $0<v(\eta)<1$ and $v'(\eta)<0$ for all $\eta>0$.

As said above in Remark~\ref{remark}, we use the implicit formulation (\ref{Phi:Cap}) (with the same notation as before, for simplicity). Since here we assume that $\phi_\tau'<0$ we take $\xi_0\gg 1$ (and possibly larger than $\xi_\tau$ of Theorem~\ref{mono:tau:small} above). The assumption implies also that $\Phi_\tau>0$ and that $\Phi_\tau'<0$. Then we follow an argument similar to that in \cite{DC}. Here we can take, if necessary, $\Phi_\tau(0^+)$ as small as we want, by assumption. This means that we can choose the shift $\xi_0$ {\it a posteriori} to get the result. For simplicity of notation we denote
\[
0<\delta:=\Phi_\tau(0^+).
\]

A lower bound is obtained by applying that $Q(\eta)>0$. Notice that this is possible for very small $\delta$ and large $\eta$ so that $M$ in (\ref{Q:positive}) can be taken $1\ll \eta \leq M$ and $M^\alpha\ll \delta^{-2}$. Then:
\begin{equation*}
  \Phi_\tau(\eta) \geq \delta v(\eta) + \frac{\tau \Phi_\tau'(0^+)}{h_c'} v'(\eta)
\end{equation*}
and this is valid for very large $\eta$ with $\eta \ll \delta^{-2/\alpha}$.
The second term is negative, but for all $\eta\geq \eta'$ such that
\[
\eta'\gg \frac{\tau}{\delta} \frac{\Phi_\tau'(0^+)}{h_c'}
\]
then there exists $C>0$ such that
\begin{equation}\label{Phi:asympt:lower}
  \Phi_\tau(\eta) \geq C\eta^{-\alpha}
  \quad \mbox{for} \quad 1\ll\eta\ll \delta^{-2/\alpha},
  \end{equation}
with $\delta\ll 1$ sufficiently small. Here we are using the behaviour of $v$ and its derivatives given in Lemma~\ref{v:bhv:tau:small} of the Appendix~\ref{v:monotone}. Notice that $\eta'$ is not necessarily large and both conditions on $\eta$ are compatible, because $\delta=\Phi_{\tau}(0^+)\geq C|\Phi_\tau'(0^+)|$ and then for $\tau$ small enough $|\Phi_\tau'(0^+)|\tau/|h_c'| \ll \delta^{2/\alpha-1}$. 
Let us obtain an upper bound. Since the second term in (\ref{Phi:Cap}) is negative, we have
\[
\Phi_\tau(\eta) \leq 
\delta v(\eta) + \frac{1}{h_c'} \int_{0}^{\eta}{v'(r) Q(\eta-r)\, dr}\,.
\]
Then, we can apply the estimates (\ref{Q:upper:basic})-(\ref{Q:lower:basic}) of the Lemma~\ref{basic:ests} on $Q$. But we may split the integral into several parts. Before that, let us introduce the following notation:
\[
I_1:= \frac{1}{h_c'}\int_{0}^{\eta} v'(r) \left(\tilde{h}(\phi_\tau(\eta-r+\xi_0)) - h_c' \Phi_\tau(\eta-r)\right)\,dr 
\]
and
\[
I_2 := \frac{d_\alpha}{h_c'}\int_{0}^{\eta} v'(r)  \int_{-\infty}^{0}\frac{(-\Phi'_\tau(z))}{(\eta-r-z)^\alpha} \, dz\, dr\,. 
\]
We observe that, by hypothesis, we can start for a $\xi_0$ large enough such that $\tilde{h}''(\phi(\xi_0))<0$ so that $I_1\leq 0$ is non-positive. The assumption on $\phi_\tau$ being decreasing up to $\xi_0$ also implies that $I_2\geq 0$. Then we have that
\begin{equation}\label{Phi:I2}
\Phi_\tau(\eta)\leq \delta v(\eta) + I_2.
\end{equation}

We get an upper bound for $I_2$ using (\ref{eta:large}) and (\ref{eta:small}) of Lemma~\ref{basic:ests} and splitting the interval of integration at some $R>0$:
\begin{equation}\label{I2:bound}
\begin{split}
  I_2 &\leq \frac{d_\alpha}{h_c'}\left(
  \int_{0}^{R}v'(r) \, \left(\frac{C}{(\eta-r)^{\alpha+1}} + \frac{C'\left|\Phi_\tau(\eta-r)\right|}{(\eta-r)^\alpha}\right) \,dr
  + \int_{R}^{\eta}v'(r) \, \left(\frac{C}{1+ (\eta-r)^\alpha}\right) \,dr
  \right) \\
  &\leq C_1\frac{R^2}{2\tau}\left(\frac{C}{(\eta-R)^{\alpha+1}}
  + \frac{C' \left|\Phi_\tau(\eta-R)\right|}{(\eta-R)^\alpha}\right)
  + \frac{C_2}{\alpha}\left(\frac{1}{R^\alpha} - \frac{1}{\eta^\alpha}\right).
\end{split}
\end{equation}

Now, we take $R$ depending on $\eta$, once that $\tau$ and $\delta$ (taken as small as necessary) are fixed:
\[
R(\eta)=(\sigma \eta)^{\alpha/2}, \quad \text{for} \quad \sigma\in (0,1).
\] 
We take $\sigma$ such that $R(\eta) \leq 1$. In particular for each $\eta$ we have
\begin{equation}\label{first:cond:gamma:sigma}
  \sigma < \eta^{-1}.
\end{equation}
Then we can say that
\[
\frac{1}{R^\alpha} \leq \left(\frac{1}{R^\alpha}\right)^{2/\alpha}=\frac{1}{R^2}.
\] 
The previous estimate and inequality (\ref{I2:bound}) applied to (\ref{Phi:I2}) yield, for some order one constants,
\begin{equation}
  \Phi_\tau(\eta) \leq \delta v(\eta) 
  +  \frac{R^2}{2\tau} \frac{C_1}{(\eta-R)^{\alpha+1}} + \frac{R^2}{2\tau}
  \frac{C_2|\Phi_\tau(\eta- R)|}{(\eta-R)^{\alpha}}
  + \frac{C_3}{\alpha}\frac{1}{R^2} - \frac{C_4}{\alpha}\frac{1}{\eta^\alpha}.
\label{asymptotic:right:1}
\end{equation}

Therefore, we can deduce the following upper bound from (\ref{asymptotic:right:1}) and using (\ref{first:cond:gamma:sigma}), where the worst case scenario is
\[
  \left( 1  - \frac{C_2}{2\tau}\sigma^\alpha \right) \Phi_\tau(\eta) \leq
  \frac{C_5}{\eta^\alpha},
\]
for some $C_5>0$ of order one, and, therefore, it is sufficient to take $\sigma$ small enough such that
\begin{equation}\label{cond:gamma:sigma}
  \sigma < \tau^{1/\alpha}\,. 
\end{equation}
Since, we can choose $\delta=\Phi_\tau(0^+)$ arbitrarily small once $\tau$ is fixed and we have (\ref{Phi:asympt:lower}), we can conclude that for $1\ll\eta\ll \delta^{-2/\alpha}$ (large enough but in this range), we can take $\sigma$ satisfying (\ref{first:cond:gamma:sigma}) and (\ref{cond:gamma:sigma}), then, there exists $C>0$ such that
\begin{equation}\label{asymptotic:right}
  \Phi_\tau(\eta) \leq \frac{C}{\eta^\alpha} \quad \mbox{for} \quad 1\ll\eta\ll \delta^{-2/\alpha}. 
\end{equation}
Finally, taking the limit $\xi_0\to\infty$ implies that $\delta\to 0$ so we can increase the range of $\eta$ in the limit and we obtain the result.
\end{proof}


\section{Numerical Computations}\label{sec:numerics}

In this section we show numerical simulations that confirm the existence of solutions of (\ref{TWP})-(\ref{far-fieldR}) for a value of $\tau>0$ under the assumptions (\ref{lin:ass}) and (\ref{nec:cond0}). Namely and 
for definiteness, in this section we consider
\begin{equation}\label{phis:num}
\phi_-=1\,, \quad \phi_+=-0.6 \quad (\phi_c=-0.4)\,,
\end{equation}
such that all conditions for a non-classical shock wave are satisfied.

First, we show numerical computations of (\ref{TWP}) performed with the method described and analysed in \cite{C}. Rewriting the travelling wave problem (\ref{TWP}) as a system making the change $\psi = \phi'$ gives
\begin{equation}\label{TWP:system}
\begin{cases}
\phi' = \psi, \\
\tau \psi' = h(\phi) - d_\alpha \int_{-\infty}^{\xi} \frac{\psi(y)}{(\xi-y)^\alpha} \, dy.
\end{cases}
\end{equation}
The singularity of the integral term $\DD^\alpha[\phi]$ is removed by using integration by parts and taking into account the regularity and far-field behaviour of $\phi$, which implies that
\begin{equation}\label{D:alpha:regular:kernel}
\int_{-\infty}^{\xi} \frac{\psi(y)}{(\xi - y)^\alpha} \, dy = \frac{1}{1-\alpha} \int_{-\infty}^{\xi} \psi'(y) (\xi-y)^{1-\alpha} \, dy. 
\end{equation}
The initial value problem (\ref{TWP:system})-(\ref{D:alpha:regular:kernel}) is solved by a scheme using the Heun's method (see e.g. \cite{Ascher}). For more information on the numerical scheme see \cite{C}.

Next, we proceed by shooting with respect to $\tau$ as follows. First, we identify two values of $\tau$, $\tau_c$ and $\tau_u$ such that $\tau_c\in \Sigma_c$ and $\tau_u\in \Sigma_u$. This is done by integrating the equations for a long enough interval, typically of length $500$, then if the solution approaches the value $\phi_c=-0.4$ in the tail, we assume that the corresponding $\tau$ is in $\Sigma_c$. If the solution decays to negative values beyond say $-10$, then we assume that the corresponding $\tau$ is in $\Sigma_u$. This operation allows to choose initial values for $\tau_c$ and $\tau_u$. Then, we start an iterative process, which consists of computing the solution for $\tau_m = (\tau_u+\tau_c)/2$, and apply the same criteria to either set $\tau_m=\tau_c$ or $\tau_m=\tau_u$. We repeat this process as long as $|\tau_c-\tau_m|<10e^{-15}$.

Figure~\ref{TWa09} shows solutions for $\alpha=0.9$, in this case the iteration stops at the value $\tau\approx 2.80018$.
\begin{figure}[H]
  \begin{center}
    \includegraphics[width=8cm,height=7cm]{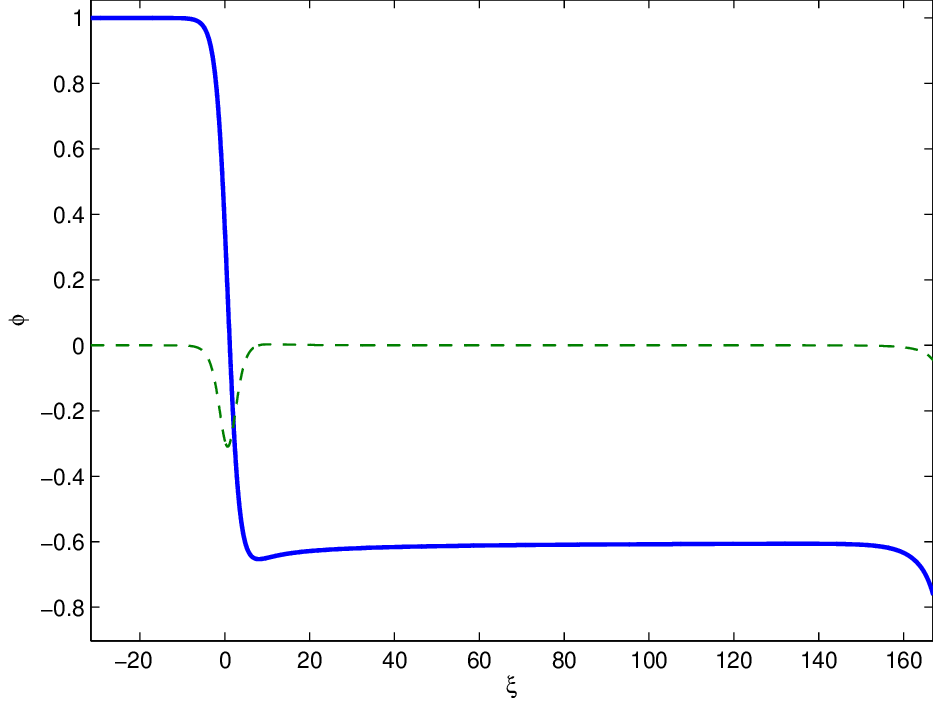}
  \end{center}
  \caption{$\alpha=0.9$ and $\tau\approx 2.80018$}
\label{TWa09}
\end{figure}

Figure~\ref{TWa05} shows solutions for $\alpha=0.5$, in this case the iteration stops at the value $\tau=72.821821443764975$.

\begin{figure}[H]
  \begin{center}
    \includegraphics[width=8cm,height=7cm]{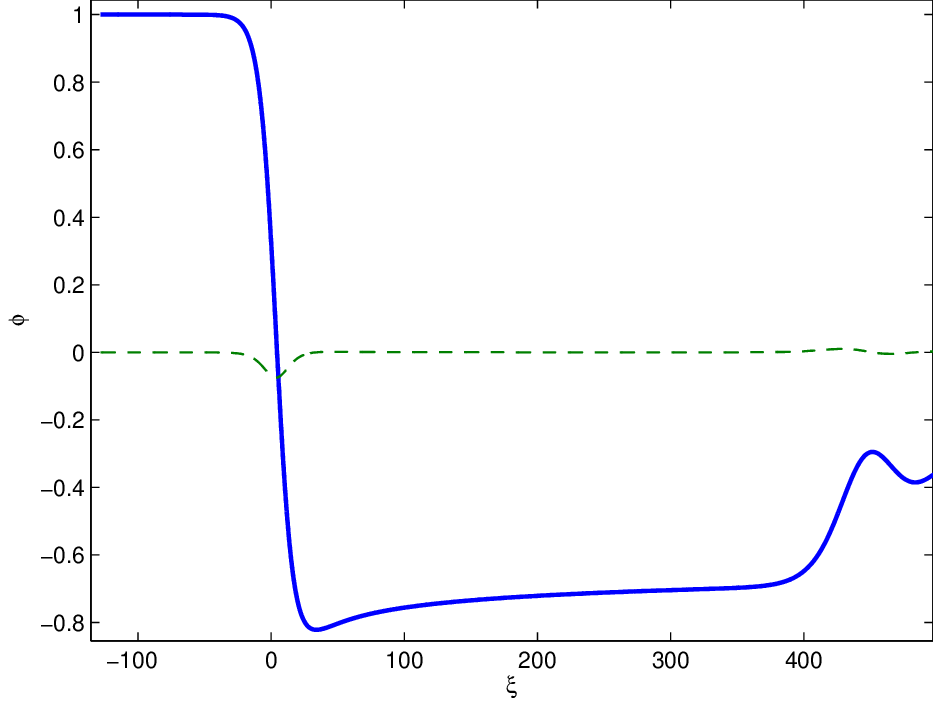}
  \end{center}
  \caption{$\alpha=0.5$ and $\tau\approx 72.82182$}
\label{TWa05}
\end{figure}



\section*{Appendix~}\label{appendix}

\appendix

\section{Continuous dependence on $\tau$}\label{appendix:B}
In this section, we prove the continuous dependence on the parameter $\tau$ using the general theory for functional differential equations, e.g. see
\cite[\S 2]{HVL} and~\cite{SeHa00}. After rewriting the problem as a functional differential equation, we check that the necessary hypotheses are fulfilled in order to apply the auxiliary lemmas and the continuous dependence result from \cite[\S 2]{HVL}.

First, we rewrite \eqref{TWP} as a system of first-order delay functional differential equations 
\[ 
\begin{cases}
    \phi' & = \psi,
  \\
\psi' & = \frac{1}{\tau} h(\phi) -\frac{d_\alpha}{\tau}
\displaystyle{\int_{-\infty}^{\xi}\frac{\psi(y)}{(\xi-y)^\alpha} \,dy.
}
\end{cases}
\]
In order to study the continuous dependence of solutions on $\tau$, we add $\tau$ as an independent variable. 
However, it is easier to consider instead of $\tau>0$ its inverse $\nu :=1/\tau$ such that the augmented system of first-order differential equations reads
\begin{equation} \label{RFDE:infinite_delay}
\begin{cases} 
\phi' &=\psi, \\
\psi' &=\nu h(\phi) -\nu\ d_\alpha \int_{-\infty}^{\xi}\frac{\psi(y)}{(\xi-y)^\alpha} \,dy, \\
\nu'  &=0.
\end{cases}
\end{equation}
In order to frame this system as one of functional differential equations with finite delay, we split the integral term at $-r$ for some $r>0$:
\[
\int_{-\infty}^\xi\frac{\psi(y)}{(\xi-y)^\alpha} \, dy=\int_{-\infty}^{-r} \frac{\psi(y)}{(\xi-y)^\alpha}\, dy+ \int_{-r}^{\xi}\frac{\psi(y)}{(\xi-y)^\alpha}\, dy.
\]
The first one is treated as a known function of $\xi$ as long as $\xi \geq -r$, while the second denotes the fractional derivative from $-r$ and one needs the initial data to be given on $(-r,0)$. 
Without loss of generality, we consider the function $\phi(\xi - r)$ to be given for $\xi\in(-\infty,0)$ and split the fractional derivative in two parts as before and applying the change of variable $y'=y+r$ we get:
\begin{equation}\label{D:alpha:split:r}
\DD^\alpha[\phi] 
= d_\alpha \int_{-\infty}^{0}\frac{\phi'(y'-r)}{(\xi + r - y')^\alpha} \,dy' +
d_\alpha \int_{0}^{\xi+r}\frac{\phi'(y'-r)}{(\xi+r-y')^\alpha} \,dy'
= W(\xi) + \DD_{-r}^\alpha[\phi](\xi),
\end{equation}
where $W(\xi) := d_\alpha  \int_{-\infty}^{0}\frac{\phi'(y'-r)}{(\xi + r - y')^\alpha} \,dy' $ is a given function.
Using (\ref{D:alpha:split:r}), system~\eqref{RFDE:infinite_delay} can be written as
\begin{equation} \label{RFDE}
\begin{cases}      
\phi' &=\psi, \\
\psi' &=\nu h(\phi)
-\nu\ d_\alpha \int_{0}^{\xi+r}\frac{\psi(y'-r)}{(\xi+r-y')^\alpha} \,dy'
- \nu W(\xi), \\
\nu'  &=0,
\end{cases}
\end{equation}
for all $\xi\geq -r$.
The first and the third equations are ordinary differential equations, whereas the second one is an integro-differential equation with finite delay in the integral term. 
Following the notation of \cite[\S 2]{HVL}, equation (\ref{RFDE}) is a functional differential equation of the form
\begin{equation}\label{gen:delay:eq}
  \dot{x} =F(\xi,x_\xi)
\end{equation}
such that $x_\xi(\theta) = x(\xi+\theta)$, for $-r\leq \theta \leq 0$.

In our particular case, $x=(\phi,\psi,\nu)$, and $F=(F_1,F_2,F_3)$ is identified as
\begin{align}
F_1\left(\xi, (\phi,\psi,\nu)\right) &= \psi, \label{our:F1}\\
F_2\left(\xi, (\phi,\psi,\nu)\right) &= \nu h(\phi)
-\nu\ d_\alpha \int_{0}^{\xi+r}\frac{\psi(y'-r)}{(\xi+r-y')^\alpha} \,dy' - \nu W(\xi), \label{our:F2}\\
F_3\left(\xi, (\phi,\psi,\nu)\right) &= 0.\label{our:F3}
\end{align}
Moreover, we consider the operator $F$ as $F: D\to\R^3$ with domain $D\subseteq \R\times C([-r,0],\R^3)$. 
We recall $C([-r,0],\R^3)$ is a Banach space with norm $\|\varphi\|_\infty = \sup_{-r\leq s \leq 0}{|\varphi(s)|}$ for functions $\varphi \in C([-r,0],\R^3)$. 
Finally, we consider the delay functional differential equation (\ref{RFDE}) for a starting time $\sigma=0$.
As it is mentioned previously, we only need the vector $(\phi,\psi,\nu)(\theta)$ for all $-r\leq\theta\leq 0$ as initial datum, the history of $\psi(\xi)$ for $\xi<-r$ is incorporated into $W(\xi)$. Besides, notice that the application of \cite[Lemma~3]{ACH} gives us the integrability of $\psi$ on $(-\infty,\xi_0)$ for $\xi_0< \xi_{exist}$ defined in Lemma~\ref{good:sign}. 
Moreover, this argument proves the finiteness of $W(\xi)$ for all $\xi>0$.



We shall show below that we can apply the following theorem to (\ref{RFDE}), rewritten as in (\ref{gen:delay:eq}) with $F$ given by (\ref{our:F1})-(\ref{our:F3}).
\begin{thm}[Continuous dependence ({\cite[Theorem 2.2]{HVL}})]\label{CDTh}
Suppose $\Omega \subseteq \R\times  C([-r,0],\R^3)$ is open, $(\sigma^{0}, \gamma^{0})\in \Omega$, $F^{0} \in C(\Omega, \R^n)$, and $x^{0}$ is a solution of the problem (\ref{gen:delay:eq}) ($F^{0}$) with initial condition $(\sigma^{0},\gamma^{0})$ which exists and is unique on $[\sigma^{0}-r,b]$. 
Let $V^{0} \subseteq \Omega$ be the compact set defined by 
\[
V^{0}= \left\{(\xi,x_\xi^{0}): \xi \in [\sigma^{0},b]\right\}
\]
and let $U^{0}$ be a neighbourhood of $V^{0}$ on which $F^{0}$ is bounded. 
If $(\sigma^{k}, \gamma^{k}, F^{k})$, $k=1,2,\dots$ satisfies $\sigma^{k} \to \sigma^{0}$, $\gamma^{k} \to \gamma^{0}$ and $\left|F^{k} - F^{0}\right|_{U^{0}} \to 0$ as $k\to \infty$, then there is a $k^*$ such that the problem (\ref{gen:delay:eq}) ($F^{k}$) for $k\geq k^*$ is such that each solution $x^{k} = x^{k}(\sigma^{k}, \gamma^{k}, F^{k})$ with initial condition $(\sigma^{k}, \gamma^{k})$ exists on $[\sigma^{k}-r,b]$ and $x^{k} \to x^{0}$ uniformly on $[\sigma^{k}-r,b]$. 
Since all $x^{k}$ may not be defined on $[\sigma^{k}-r,b]$, by $x^{k} \to x^{0}$ uniformly on $[\sigma^{k}-r,b]$, we mean that for any $\varepsilon>0$, there is a $k^*(\varepsilon)$ such that $x^{k}$, $k \geq k^*(\varepsilon)$, is defined on $[\sigma^{0}-r+\varepsilon,b]$, and $x^{k} \to x^{0}$ uniformly on $[\sigma^{0}-r+\varepsilon,b]$.
\end{thm}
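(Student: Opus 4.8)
The plan is to prove Theorem~\ref{CDTh} along the classical lines for functional differential equations (as in \cite[\S 2.2]{HVL}): reduce to the integral formulation, obtain uniform a priori estimates that keep the perturbed solutions inside $U^{0}$, and then pass to the limit using Arzel\`a--Ascoli together with the uniqueness hypothesis. Throughout I would use that a continuous $x^{k}$ on $[\sigma^{k}-r,b]$ with $x^{k}_{\sigma^{k}}=\gamma^{k}$ solves $\dot x = F^{k}(\xi,x_\xi)$ if and only if $x^{k}(\xi) = \gamma^{k}(0) + \int_{\sigma^{k}}^{\xi} F^{k}(s,x^{k}_s)\,ds$ for $\xi\in[\sigma^{k},b]$, together with the local existence and continuation theorems for~(\ref{gen:delay:eq}), which apply since the $F^{k}$ are continuous and, crucially, bounded on $U^{0}$.

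First I would fix $\rho>0$ so small that the closed $\rho$-neighbourhood (in $\R\times C([-r,0],\R^{n})$) of the compact tube $V^{0}$ is contained in $U^{0}$, set $M:=\sup_{U^{0}}|F^{0}|$, and choose $k^{*}$ so large that for all $k\ge k^{*}$ the quantities $|\sigma^{k}-\sigma^{0}|$ and $\|\gamma^{k}-\gamma^{0}\|_\infty$ are as small as desired and $\sup_{U^{0}}|F^{k}|\le M+1=:M'$; the latter is possible because $|F^{k}-F^{0}|_{U^{0}}\to 0$. On any subinterval of existence of $x^{k}$ on which $(\xi,x^{k}_\xi)\in U^{0}$ one then has $|\dot x^{k}|\le M'$, so there the $x^{k}$ are uniformly bounded and $M'$-Lipschitz; since moreover $\{\gamma^{k}\}$ is equicontinuous (uniform convergence to the continuous $\gamma^{0}$ on the compact interval $[-r,0]$), the family $\{x^{k}\}$ is uniformly bounded and equicontinuous on all of its interval of existence, and hence the segments $\{x^{k}_\xi\}$ form a precompact subset of $C([-r,0],\R^{n})$.

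Next I would show that for $k\ge k^{*}$ (after possibly shrinking the neighbourhoods of $\sigma^{0},\gamma^{0}$ once more) the solution $x^{k}$ never reaches the boundary of this $\rho$-tube, so by the continuation theorem it exists on all of $[\sigma^{k}-r,b]$. Arguing by contradiction, along a subsequence let $\beta_k\in(\sigma^{k},b]$ be the first time with $\operatorname{dist}\big((\xi,x^{k}_\xi),V^{0}\big)=\rho$. By the previous paragraph the functions $x^{k}|_{[\sigma^{k}-r,\beta_k]}$, extended constantly to the right of $\beta_k$, are uniformly bounded and equicontinuous on $[\sigma^{0}-r,b]$; using $\sigma^{k}\to\sigma^{0}$ and $\gamma^{k}\to\gamma^{0}$, Arzel\`a--Ascoli yields a sub-subsequence along which $x^{k}\to x^{*}$ uniformly on $[\sigma^{0}-r,b]$, $\beta_k\to\beta^{*}\in[\sigma^{0},b]$, and $x^{*}_{\sigma^{0}}=\gamma^{0}$. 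Because $\sup_s\|x^{k}_s-x^{*}_s\|_\infty\to 0$, $F^{0}$ is uniformly continuous on the compact set $U^{0}$, and $|F^{k}-F^{0}|_{U^{0}}\to 0$, one may pass to the limit in $x^{k}(\xi)=\gamma^{k}(0)+\int_{\sigma^{k}}^{\xi}F^{k}(s,x^{k}_s)\,ds$ on $[\sigma^{0},\beta^{*}]$ to conclude that $x^{*}$ solves $\dot x=F^{0}(\xi,x_\xi)$ with data $(\sigma^{0},\gamma^{0})$ there; by the assumed uniqueness, $x^{*}=x^{0}$ on $[\sigma^{0}-r,\beta^{*}]$. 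But then $(\beta^{*},x^{0}_{\beta^{*}})\in V^{0}$ whereas $\operatorname{dist}\big((\beta^{*},x^{0}_{\beta^{*}}),V^{0}\big)=\rho>0$, a contradiction; hence all $x^{k}$ with $k\ge k^{*}$ exist on $[\sigma^{k}-r,b]$ and stay in $U^{0}$.

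Finally, with all $x^{k}$ ($k\ge k^{*}$) now known to exist on $[\sigma^{k}-r,b]$, to remain in $U^{0}$, and to be uniformly bounded and equicontinuous there, I would run the Arzel\`a--Ascoli-plus-limit-passage argument a last time: if $x^{k}\not\to x^{0}$ uniformly on $[\sigma^{0}-r+\varepsilon,b]$ for some $\varepsilon>0$, then some subsequence stays at distance $\ge\varepsilon_0>0$ from $x^{0}$ on that interval, yet a further subsequence converges uniformly to a solution of~(\ref{gen:delay:eq}) ($F^{0}$) with data $(\sigma^{0},\gamma^{0})$, which by uniqueness must be $x^{0}$ --- a contradiction. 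Hence $x^{k}\to x^{0}$ uniformly on $[\sigma^{0}-r+\varepsilon,b]$ for every $\varepsilon>0$, which is precisely the asserted mode of convergence. I expect the main difficulty to be the bookkeeping created by the moving initial time $\sigma^{k}$ and initial segment $\gamma^{k}$: one must carry out the compactness arguments and the passage to the limit on a single fixed interval while the left endpoints vary, and the first-exit-time step is intertwined with the continuation theorem, since the $x^{k}$ must be known to exist up to $\beta_k$ before limits can be extracted --- so the a priori bound, the continuation principle and Arzel\`a--Ascoli must be orchestrated together rather than invoked separately.
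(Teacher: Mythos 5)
This statement is quoted verbatim from \cite[Theorem~2.2]{HVL} and the paper gives no proof of it --- Appendix~\ref{appendix:B} only verifies the hypotheses for the system (\ref{RFDE}) and then invokes the theorem. Your argument is essentially the standard Hale--Verduyn Lunel proof (integral formulation, a priori bound in a $\rho$-tube around the compact trajectory set $V^0$, first-exit-time contradiction via Arzel\`a--Ascoli and the uniqueness hypothesis), and it is sound; the only imprecision is the phrase ``$F^0$ is uniformly continuous on the compact set $U^0$'' --- $U^0$ is a neighbourhood in $\R\times C([-r,0],\R^n)$ and is not compact, but the correct fix is already in your hands, since you established that the segments $\{(s,x^k_s)\}$ lie in a precompact set, on whose closure $F^0$ is uniformly continuous.
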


Next we proceed to check that our $F: \R\times C([-r,0],\R^3) \to \R^3$ in (\ref{our:F1})-(\ref{our:F3}) is continuous in both variables. This is obvious for $F_1$ and $F_3$, since $F_1$ is the projection of the second component and $F_3$ is just the zero constant function. In the case of $F_2$, the first term, $h(\phi(\xi))$, is continuous because it is a composition of continuous functions. The integral term is continuous since it maps $C_b(\R)$ into $C_b(\R)$ and, finally, the last term, $W(\xi)$ is continuous because of the regularity of $\phi$ in $\xi \in (-\infty,0)$ and finiteness is obtained as is explained above.

To study the existence of solutions for (\ref{RFDE}) starting at $\sigma=0$, we only need to prescribe the values for $(\phi,\psi,\nu)(\xi)$ at $-r<\xi<0$, since the history of $\psi(\xi)$ for $\xi<0$ is incorporated in $W(\xi)$ which we treat as a given function.
To study the continuous dependence of solutions on $\tau$ (or equivalently $\nu$) in a neighbourhood of $\tau_0$, we consider the following initial data
\[
 \sigma^k \equiv 0, \quad
 \phi^k \to \phi^0, \quad
  \psi^k \to \psi^0, \quad
 \nu^k \to \tfrac{1}{\tau_0} \quad \mbox{as} \quad k\to \infty.
\]
Note that the delicate point is that changing $\tau$ (or $\nu$) influences the profile $\phi(\xi)$, $\psi(\xi) =\phi'(\xi)$, for $\xi<0$, hence, also $F$ through its dependence on $W(\xi)$. Therefore, we have to use continuous dependence of 'local' solutions with respect to $\tau$, to justify the assumptions on $\phi^k(\xi)$, $\psi^k(\xi)$ and $F^k$. But this results follows from \cite[Lemma~2]{ACH}. Let us apply this here.

Considering the behaviour of the travelling wave solution and its derivative at $-\infty$ we know that for all $\nu_k =1/\tau_k>0$
\[
\lim_{\xi \to -\infty} \phi^k(\xi) = \lim_{\xi \to -\infty} \phi_{\tau_k}(\xi)
= \phi_- \quad \mbox{and} \quad \lim_{\xi \to -\infty} \psi^k(\xi)
=\lim_{\xi \to -\infty} \phi'_{\tau_k}(\xi) = 0.
\]
Therefore, by \cite[Lemma~2]{ACH} and for all fixed $k>0$ natural number, there exists some $\xi_k= \log(1/k)/\lambda_k$ such that
\[
|\phi^k(\xi) - \phi_-|<\frac{1}{k},
\quad |\psi^k(\xi)|<\frac{1}{k}, \ \ \forall \xi < \xi_k. 
\]
Since it is known that
\[
\lim_{\xi \to -\infty} \phi^0(\xi) = \lim_{\xi \to -\infty} \phi_{\tau_0}(\xi)
= \phi_- \quad \text{and} \quad \lim_{\xi \to -\infty} \psi^0(\xi)
=\lim_{\xi \to -\infty} \phi'_{\tau_0}(\xi) = 0,
\]
then by the triangle inequality we get that 
\[
|\phi^k(\xi) - \phi^0(\xi)|<\frac{2}{k}, \quad |\psi^k(\xi)- \psi^0(\xi)|<\frac{2}{k}, \ \ \forall \xi < \xi_k. 
\]
Now, for each fixed $1/k>0$, we can apply continuous dependence on $\tau$ in the interval $[\xi_k, 0]$ taking as initial condition an arbitrary sequence of $\nu_k = 1/\tau_k$ that converges to $1/\tau_0$ as $k \to \infty$, $\sigma^k = \xi_k$, $F^k =F$, $\phi^k = \phi_{\tau_k}$ and $\psi^k= \phi'_{\tau_k}$. Therefore, by the continuous dependence result we yield that 
\[
\begin{split}
\forall \e_k=\frac{1}{k}>0, \, \exists k_0 > 0,
\, k>k_0: \ &|\phi^k(\xi) - \phi^0(\xi)|<\frac{1}{k} \\
\mbox{and}\quad &|\psi^k(\xi)-\psi^0(\xi)|<\frac{1}{k},
\quad \forall \xi \in [\xi_k, 0].
\end{split}
\]
Note that since $\tau_k \to \tau_0$ as $k\to \infty$ then for all $\delta_k>0$ small, there exists some $k_0 >0$ such that for all $k>k_0$, $\tau_k \in (\tau_0- \delta_k , \tau_0 + \delta_k)$. If we define a new subsequence taking the values $\nu_k$ for $k>k_0$ and rename this subsequence again as $\{\nu_k\}_{k\in \N}$, therefore, for this new sequence and taking $\sigma^k\equiv 0$ we conclude that
\[
\phi^k(\xi) \to \phi^0(\xi), \quad \psi^k(\xi) \to \psi^0(\xi),
\ \ \forall \xi \leq 0,
\]
which is sufficient to apply Theorem~\ref{CDTh} of continuous dependence on $\tau$ for the system (\ref{RFDE}) in an arbitrary bounded interval $[0,b]$ on the interval of existence.



\section{The characteristic equations of the linearised problems}
\label{appendix:roots}
Let us recall some results about the zeros of the functions
\begin{equation}\label{gen:left}
\tau z^2 + b z^\alpha - a \quad \mbox{for} \quad a\,,b>0\,, \quad \alpha\in(0,1),
\end{equation}
(compare with \ref{pol:left}) and
\begin{equation}\label{gen:right}
\tau z^2 + b z^\alpha + a \quad \mbox{for} \quad a\,,b>0\,, \quad \alpha\in(0,1),
\end{equation}
we can give the following result:
\begin{lem}\label{roots} 
For $\alpha \in (0,1)$, consider the principal branch of 
$z^\alpha$ ($-\pi<\mbox{arg}(z)<\pi$). Then (\ref{gen:left}) has exactly one positive real root and two complex conjugate roots with negative real part, and (\ref{gen:right}) has exactly two complex conjugate roots with negative real part on the principal branch of $z^\alpha$.
\end{lem}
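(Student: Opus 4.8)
The plan is to count the roots of $f(z):=\tau z^2 + b z^\alpha - a$ (and of $g(z):=\tau z^2 + b z^\alpha + a$) in the slit plane $\{-\pi<\arg z<\pi\}$ by a combination of a direct argument on the positive real axis and an argument-principle (winding number) computation along the boundary of a large slit disk. First I would handle the real axis: for $z=x>0$ the function $x\mapsto \tau x^2 + b x^\alpha - a$ is real, strictly increasing (its derivative $2\tau x + \alpha b x^{\alpha-1}$ is positive for $x>0$), tends to $-a<0$ as $x\to0^+$ and to $+\infty$ as $x\to\infty$, hence has exactly one positive real root; for $g$ the same monotonicity shows the value stays $\ge a>0$, so $g$ has no positive real root. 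No root of either function lies on the negative real axis or at $0$ because there $z^\alpha$ and $z^2$ have arguments that cannot cancel the real constant (one checks $\arg(\tau z^2)=2\pi\alpha'$ etc. do not align; more simply, on the ray $\arg z=\pi$ we get $\tau x^2 e^{2\pi i}$... — the cleanest is: if $z=-x$, $x>0$, then $\mathrm{Im}(f(z))=b x^\alpha\sin(\alpha\pi)\ne0$ since $0<\alpha<1$).

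Next I would apply the argument principle to $f$ on the region $\Omega_R$ bounded by the circle $|z|=R$ (traversed from $\arg=-\pi$ to $\arg=\pi$) and the two sides of the branch cut along the negative axis, indented by a small circle of radius $\rho$ around the origin, and then let $R\to\infty$, $\rho\to0$. On the large circle, $\tau z^2$ dominates, so $f(z)\sim \tau z^2$ and the image winds twice around the origin, contributing $+2$ to the change of argument. On the small circle and on the two lips of the cut the contribution is $o(1)$ (there $f\to -a$, a nonzero constant, so no winding). Hence $f$ has exactly two zeros in the slit plane counted with multiplicity. Combined with the one positive real zero found above, the remaining zero count is one — but zeros off the real axis come in complex-conjugate pairs since $f$ has real coefficients and $\overline{z^\alpha}=\bar z^{\,\alpha}$ on the principal branch away from the cut; a single non-real zero is impossible, so in fact the "two zeros" must be interpreted correctly: the positive real root is one, and... here I would be careful and recount. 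The resolution is that the winding number gives the number of zeros of $f$, which is $2$, one of which is the positive real root; that forces exactly one more, which cannot be non-real alone, so one re-examines — the correct statement is that $\tau z^2$ contributes winding $2$ but one must subtract/account properly: the honest computation gives three zeros total for the degree-two-in-$z$-plus-fractional symbol because $z^\alpha$ with $0<\alpha<1$ is multivalued; the rigorous route is to substitute $z=w^{1/\alpha}$ ...

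The cleanest rigorous approach, and the one I would actually carry out, is the substitution trick: since the troublesome term is $z^\alpha$, and if $\alpha$ were rational, say $\alpha=p/q$, one could set $z=w^q$ and reduce to an honest polynomial of degree $2q$ in $w$, count its roots, and then select those $w$ with $z=w^q$ in the correct sheet; for general $\alpha\in(0,1)$ one uses instead a homotopy/continuity argument in $\alpha$ together with the argument principle on $\partial\Omega_R$ as above, tracking that no root crosses the boundary $\partial\Omega_R$ or the cut as $\alpha$ varies (which holds because roots on the cut are excluded by the $\sin(\alpha\pi)\ne0$ computation, and roots at infinity are excluded by the $\tau z^2$ domination). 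Then I would verify at a convenient value (e.g. the limit behaviour as $\tau\to0^+$, where $f$ reduces to $bz^\alpha=a$ with unique solution $z=(a/b)^{1/\alpha}>0$, plus the two "extra" roots emerging from infinity along directions $\arg z\to\pm\pi/2$ as $\tau\to0^+$, which have negative real part) that the count is: one positive real root and a conjugate pair with negative real part for $f$; and for $g$, the same winding computation gives the two roots, none on the real axis, hence a conjugate pair, and their real parts are negative because on the imaginary axis and right half-plane $\mathrm{Re}(g)\ge$ ... one checks $\mathrm{Re}\,g(iy)$ and uses that $g$ has no zero with $\mathrm{Re}\,z\ge0$ (on $\arg z\in(-\pi/2,\pi/2)$ all three terms $\tau z^2$, $bz^\alpha$, $a$ have real part bounded below so their sum cannot vanish — more precisely $\mathrm{Re}(bz^\alpha+a)>0$ there and a short estimate closes it).

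\medskip
\noindent\emph{Main obstacle.} The genuine difficulty is the bookkeeping of zeros for the non-polynomial symbol on the correct branch: naively the argument principle "should" give two zeros from $\tau z^2$, but the fractional term contributes and the honest total is three for $f$ (one real, two complex), so the winding computation must be set up on the slit domain with the indentation at the origin done correctly, and the claim that the two non-real roots have \emph{negative} real part requires the half-plane estimate on $\mathrm{Re}\,f$ (equivalently a perturbation/continuity argument from $\tau=0$). I expect that verifying "no roots in the closed right half-plane other than the positive real one" — via the inequality $\mathrm{Re}(\tau z^2 + bz^\alpha) > 0$ for $z$ in the open right half-plane together with a separate check on the imaginary axis — is the step needing the most care, since it is what pins down the sign of the real parts rather than merely counting.
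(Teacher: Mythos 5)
There is a genuine gap, and in fact an error, in the central counting step. Your analysis on the positive real axis and the exclusion of roots on the cut (via $\mathrm{Im}=b x^\alpha\sin(\alpha\pi)\neq 0$) are fine, but the winding computation is set up incorrectly: on the two lips of the cut the image of $f(z)=\tau z^2+bz^\alpha-a$ does \emph{not} stay near the constant $-a$; it runs from a neighbourhood of $-a$ all the way to a neighbourhood of $+\tau R^2$ while remaining confined to one half-plane (upper lip: $\mathrm{Im}\,f=x^\alpha\sin(\alpha\pi)>0$; lower lip: $<0$), so each lip contributes approximately $+\pi$ to the change of argument. Added to the $4\pi$ from the large circle (and $o(1)$ from the small indentation), the total is $6\pi$, i.e.\ exactly three zeros of \eqref{gen:left} in the slit plane — consistent with the lemma — whereas your count of two comes precisely from discarding the lip contributions as $o(1)$. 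You notice the mismatch but never resolve it: the substitution $z=w^{1/\alpha}$ only makes sense for rational $\alpha$ and still requires sheet selection, and the homotopy in $\alpha$ is asserted rather than carried out (no rigorously established base case, and no proof that roots cannot cross the cut or escape to infinity along the deformation). The same unresolved bookkeeping is what would give ``exactly two'' roots for \eqref{gen:right}, so that count is also not established as written.

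The second gap concerns the sign of the real parts. Your proposed inequality $\mathrm{Re}(\tau z^2+bz^\alpha)>0$ on the open right half-plane is false: for $z$ close to the imaginary axis with $|z|$ large, $\mathrm{Re}(\tau z^2)\approx-\tau|z|^2$ dominates. The claim you need is nevertheless true and has an elementary proof via imaginary parts: if $z=re^{i\theta}$ with $0<\theta\le\pi/2$, then $\mathrm{Im}\bigl(\tau z^2+bz^\alpha\pm a\bigr)=\tau r^2\sin 2\theta+b r^\alpha\sin(\alpha\theta)>0$, and symmetrically for $-\pi/2\le\theta<0$; hence neither \eqref{gen:left} nor \eqref{gen:right} has a root in the closed right half-plane off the positive real axis, which, combined with your real-axis analysis, pins down the signs. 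Without such an argument the ``negative real part'' statement is unproven in your plan. For comparison, the paper does not argue along your contour at all: it obtains both root counts by citing \cite{ACH} and \cite{BK}, where the results are proved by variants of Rouch\'e's theorem; a corrected version of your computation (lips included, plus the imaginary-part half-plane argument) would be a legitimate self-contained alternative, but as it stands the proposal is incomplete.
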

The statement about \eqref{gen:left} and \eqref{gen:right} are proven in \cite{ACH} and \cite{BK}, respectively, using variants of Rouche's theorem.

For later use, we give the expansion of the zeros of
(\ref{gen:right}) provided that $a$ and $b$ are of order $1$ as $\tau \to 0^+$ (see \cite{ACH}):
\begin{equation}\label{complex:tau:small:right}
z= b^{\frac{1}{\alpha-2}} e^{\pm i\pi\frac{1}{\alpha-2}}\frac{1}{\tau^{\frac{1}{2-\alpha}}}
 -\frac{a}{2b^{\frac{1}{\alpha-2}} e^{\pm i\pi\frac{1}{\alpha-2}}  
 + b^{\frac{\alpha-1}{\alpha-2}} \alpha e^{\pm i\pi\frac{\alpha-1}{\alpha-2}}}\frac{1}{\tau^{\frac{1-\alpha}{2-\alpha}}} + O\left(\frac{1}{\tau^{\frac{1-2\alpha}{2-\alpha}}}\right) 
\end{equation}
as $\tau \to 0^+$.

\section{The linearised equation}\label{appendix:v}
In this appendix we consider the linear inhomogeneous equation:
\begin{equation}\label{lin:eq}
\tau \psi''+\DD^\alpha_{0}[ \psi ]+ a \psi  
= Q(\eta) \,,
\quad\mbox{with}\quad a>0,
\end{equation}
(here $'= \frac{d}{d\eta}$) with initial conditions 
\begin{equation}\label{ics}
\psi(0^+)=C_0\,, \quad \psi'(0^+)=C_1\,.
\end{equation}
We recall the derivation of a solution via the Laplace transform see e.g. \cite{BK}. 
Applying the Laplace transform $\LL$ to (\ref{lin:eq})--(\ref{ics}) yields 
\begin{equation}\label{laplace:transform}
\LL(\psi)(s) =\frac{1}{\tau s^2 + s^\alpha+a} \left(\LL(Q)(s) 
+ (\tau s + s^{\alpha-1}) \psi(0^+) + \tau \psi'(0^+)\right)\,,
\end{equation}
where $\LL(f)(s) =\int_{0}^\infty e^{-s\eta}f(\eta) d\eta$. 
Using $\LL(f\ast g)(s)= \LL(f)(s) \, \LL(g)(s)$, we deduce
\[
\begin{split}
\psi =\psi(0^+)\LL^{-1}\left(\frac{\tau s + s^{\alpha-1}}{\tau s^2 + s^\alpha + a}\right) 
&+\tau\psi'(0^+) \LL^{-1}\left(\frac{1}{\tau s^2 + s^\alpha + a}\right) \\
&+ \LL^{-1}\left(\frac{1}{\tau s^2 + s^\alpha + a}\right)\ast Q \,.
\end{split}
\]
Define
\begin{equation}\label{v:coeff}
v(\eta) := \LL^{-1}\left(\frac{\tau s + s^{\alpha-1}}{\tau s^2 + s^\alpha + a} \right) (\eta) \quad 
\mbox{and} \quad \tilde{v}(s) :=\frac{\tau s + s^{\alpha-1}}{\tau s^2 + s^\alpha + a}\, .
\end{equation}
Observing that $\lim_{\eta\to 0^+} v(\eta) = \lim_{s\to \infty} s\tilde{v}(s)=1$ and
\[
\frac{1}{\tau s^2 + s^\alpha + a}
= \frac{1}{a}(1-s\tilde{v}(s)) 
= -\frac{1}{a} \big(s\LL(v)(s) -v(0^+) \big) 
\]
implies 
\[
\LL^{-1}\left(\frac{1}{\tau s^2 + s^\alpha + a}\right)(\eta)= -\frac{1}{a} v'(\eta)\,.
\] 
Consequently, 
\begin{equation}\label{v:at:zero}
\lim_{\eta\to 0^+} v'(\eta)=0\,.
\end{equation}
Writing the expression for $\psi$ in terms of $v$ reads
\begin{equation}\label{var:consts1}
\psi(\eta) =\psi(0^+) v(\eta)-\frac{\tau}{a} \psi'(0^+) v'(\eta)
-\frac{1}{a}\int_0^\eta v'(y) Q(\eta-y)\, \,dy\,.
\end{equation}

For $a>0$, let us sketch the computation of $v(\eta)$. We recall that since this is the inverse 
Laplace transform of $\tilde{v}(s)$, we have to compute:
\begin{equation}\label{inv:L:vtilde}
v(\eta)= \frac{1}{2\pi i} \int_{Br}e^{s\eta} \frac{\tau s +s^{\alpha-1}}{\tau s^2 +s^{\alpha}+a} \,ds
\end{equation} 
where $Br\subset \mathbb{C}$ is a Bromwich contour:
\begin{equation}\label{Br:cont}
Br:=
\{s: \ \mbox{Re}(s)=\sigma\geq 1 \ \& \ \mbox{Im}(s)\in (-\infty,\infty)\}.
\end{equation}
Moreover, we restrict to the principal representation of $s$, namely, here $\mbox{arg}(s)\in(-\pi,\pi]$. Following the approach in \cite{GM2} and \cite{BK}
and denoting by $s_{1}$ and $s_{2}=\overline{s_{1}}$ the zeros of (\ref{gen:right}) with $b=1$, which are the poles of the integrand in (\ref{inv:L:vtilde}).
  The contribution to the integral of these poles can be computed 
away from the Riemann surface cut (since $\alpha\in (0,1)$) that is the negative part of the real line. One can then split the integral as 
follows:  
\begin{equation}\label{v:expression}
v(\eta) =\frac{a\sin(\alpha \pi)}{\pi}\int_0^{\infty}e^{-\eta r} K(r)\, dr
+ 2\mbox{Re}\left( e^{s_1 \eta} \frac{\tau s_1 +s_1^{\alpha-1}}{2\tau s_1 +\alpha s_1^{\alpha-1}} \right)\,,
\end{equation}
where
\begin{equation}\label{Ks}
K(r)= r^{\alpha-1}\tilde{K}(r)
\quad \mbox{with} \quad \tilde{K}(r)= \frac{ 1 }{ (\tau r^2 + a)^2 
+ 2(\tau r^2 + a)r^\alpha \cos(\alpha\pi)+r^{2\alpha}}\,.
\end{equation} 
The integral term is bounded since $K\in L^1((0,\infty))$.
The asymptotic behaviour of the integral term for $\eta\to\infty$ can be studied by a refined Watson's Lemma in \cite[p. 65]{Sir71} and \cite[\S 4]{BleHan}. We note that the function $\tilde{K}(r)$ is not differentiable at $r=0$, but we have that for a small $\e>0$, the properties $K\in L^1((0,\infty))$ and $K =o(r^{\alpha-1+\e})$ for $r\to 0$ imply that $\LL(K)(\eta) =o(\eta^{-\alpha-\e})$ for $\eta \to \infty$. Using a Puiseux series expansion of $K(r)$ for $r\to 0$, allows to deduce for $\eta \to \infty$,
\begin{equation}\label{v:infty}
 \int_0^{\infty}e^{-\eta r} K(r) \, dr
 =\frac{\Gamma(\alpha)}{a^2} \frac{1}{\eta^{\alpha}}+ O(\eta^{-2\alpha}).
\end{equation}

Finally, we provide the next result of continuity of $v(\eta)=v(\eta;\tau)$ with respect to $\tau>0$ and the asymptotic behaviour at the origin.
\begin{lem}\label{v:cont:behav}
For $a>0$ and $\tau>0$, let $v(\eta;\tau)$ be the function defined by (\ref{v:expression})-(\ref{Ks}) for $\eta>0$. Then $v(\eta;\tau)$ as a function of $\tau$ is continuous for $\tau>0$. We also have that $v$ is uniformly bounded with respect to $\eta>0$ and have the following behaviour for $\eta \to 0$,
\[
v(\eta) \sim 1- \frac{a}{2\tau} \eta^2 \, , \ \ 
v'(\eta) \sim -\frac{a}{\tau} \eta\, , \ \ \ \text{as} \ \ \eta \to 0\, .
\]
\end{lem}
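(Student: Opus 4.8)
The plan is to extract all three assertions from the explicit representation \eqref{v:expression}--\eqref{Ks}, supplemented by the observation that, choosing $Q\equiv 0$, $C_0=1$, $C_1=0$ in \eqref{var:consts1}, the function $v$ is exactly the solution of the homogeneous equation $\tau v''+\DD^\alpha_{0}[v]+a v=0$ with $v(0^+)=1$ and $v'(0^+)=0$ (the latter being \eqref{v:at:zero}); this last fact will drive the behaviour near $\eta=0$.

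For the continuity in $\tau$ I would treat the two summands of \eqref{v:expression} separately. For the integral term, I would first note that the reciprocal of $\tilde K(r)$ in \eqref{Ks} is $\bigl|(\tau r^2+a)+r^\alpha e^{i\alpha\pi}\bigr|^2$, which is bounded below by $(r^\alpha\sin(\alpha\pi))^2$ for all $r>0$, by $a^2/2$ for $r$ in a fixed neighbourhood of $0$, and by $\tau_1^2 r^4$ for $r$ large, all uniformly for $\tau\in[\tau_1,\tau_2]\subset(0,\infty)$. Hence $|e^{-\eta r}K(r;\tau)|\le K(r;\tau)\le g(r)$ for a single $g\in L^1(0,\infty)$ and all such $\tau$, and since $K(r;\cdot)$ is continuous for each $r>0$, dominated convergence gives continuity of the integral term in $\tau$. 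For the residue term, I would invoke Lemma~\ref{roots}: the pole $s_1=s_1(\tau)$ belongs to a non-real complex conjugate pair of zeros of \eqref{gen:right} with $b=1$, so it lies off the branch cut (the negative real axis); moreover these zeros are simple --- a double root $s$ would satisfy $2\tau s+\alpha s^{\alpha-1}=0$ together with $\tau s^2+s^\alpha+a=0$, which forces $s^2=\tfrac{a\alpha}{\tau(2-\alpha)}>0$, i.e.\ $s$ real, contradicting non-reality (and \eqref{gen:right} has no real zeros anyway). Simplicity gives $2\tau s_1+\alpha s_1^{\alpha-1}\neq 0$, so by the implicit function theorem $s_1(\tau)$, and therefore the residue term, depends analytically --- in particular continuously --- on $\tau>0$.

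The uniform bound in $\eta$ is then read off at once: the integral term is bounded by $\tfrac{a\sin(\alpha\pi)}{\pi}\|K(\cdot;\tau)\|_{L^1(0,\infty)}$ independently of $\eta>0$, and since $\mathrm{Re}(s_1)<0$ we have $|e^{s_1\eta}|\le 1$ for $\eta\ge 0$, so the residue term is bounded by $2\bigl|\tfrac{\tau s_1+s_1^{\alpha-1}}{2\tau s_1+\alpha s_1^{\alpha-1}}\bigr|$. For the expansion as $\eta\to 0$ I would argue as follows. From \eqref{v:expression} one sees that $v\in C^2([0,\infty))$ (differentiating twice under the integral sign is legitimate because $r^2K(r)\in L^1(0,\infty)$, being $\sim r^{\alpha+1}$ near $0$ and decaying like $r^{\alpha-3}$ at infinity). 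Since $v'$ is $C^1$ with $v'(0^+)=0$, we get $\sup_{[0,\eta]}|v'|=O(\eta)$ and hence $|\DD^\alpha_{0}[v](\eta)|\le d_\alpha\sup_{[0,\eta]}|v'|\,\tfrac{\eta^{1-\alpha}}{1-\alpha}=O(\eta^{2-\alpha})\to 0$ as $\eta\to 0^+$. Letting $\eta\to 0^+$ in $\tau v''+\DD^\alpha_{0}[v]+a v=0$ gives $v''(0^+)=-a/\tau$, and Taylor's theorem with $v(0^+)=1$, $v'(0^+)=0$ yields $v(\eta)=1-\tfrac{a}{2\tau}\eta^2+o(\eta^2)$ and $v'(\eta)=-\tfrac{a}{\tau}\eta+o(\eta)$.

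The step I expect to be the real work is the continuity in $\tau$, and within it the two uniformity points: producing one $L^1$-majorant for $K(\cdot;\tau)$ valid on compact $\tau$-intervals, and verifying that $s_1(\tau)$ remains a simple zero off the negative real axis for every $\tau>0$ so that the residue term is well defined and smoothly dependent on $\tau$. The boundedness and the $\eta\to 0$ asymptotics are, by contrast, short once the explicit representation and the equation are in hand.
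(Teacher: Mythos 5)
Your proposal is correct, and it reaches the three assertions by a route that overlaps with, but is not identical to, the paper's. For the continuity in $\tau$ the paper simply invokes dominated convergence for the contour integral (\ref{inv:L:vtilde}) together with Lemma~\ref{roots}; you instead apply dominated convergence to the real-axis integral in (\ref{v:expression})--(\ref{Ks}) with an explicit $L^1$ majorant uniform on compact $\tau$-intervals, and treat the residue term separately by checking that $s_1(\tau)$ is a simple, non-real zero of (\ref{gen:right}) off the branch cut, so that it (and the coefficient $\frac{\tau s_1+s_1^{\alpha-1}}{2\tau s_1+\alpha s_1^{\alpha-1}}$) depends continuously on $\tau$. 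This is essentially a fleshed-out version of the paper's one-line argument, and arguably the more careful one, since on the Bromwich line the integrand only decays like $1/|s|$ and one effectively needs the deformed representation anyway; your simplicity computation ($2\tau s+\alpha s^{\alpha-1}=0$ forcing $s^2=\frac{a\alpha}{\tau(2-\alpha)}>0$, hence $s$ real, a contradiction) is a genuine addition that the paper leaves implicit. The uniform bound in $\eta$ you read off exactly as the paper does (integrability of $K$ and $\mathrm{Re}(s_1)<0$). For the behaviour as $\eta\to0$ the routes differ: the paper applies the Initial Value Theorem in the Laplace variable to $v$, $v'$, $v''$ (see (\ref{v:2prime:laplace})) and then Taylor expands, whereas you show $v\in C^2([0,\infty))$ by differentiating under the integral sign (using $r^2K\in L^1$), estimate $\DD^\alpha_0[v](\eta)=O(\eta^{2-\alpha})$, and read $v''(0^+)=-a/\tau$ from the homogeneous equation before invoking Taylor. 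Your argument buys a self-contained verification of the two-sided regularity that the Tauberian shortcut takes for granted, at the modest cost of relying on the (standard, and also paper-assumed) fact that the inverse Laplace transform (\ref{v:coeff}) does solve $\tau v''+\DD^\alpha_0[v]+av=0$ with $v(0^+)=1$, $v'(0^+)=0$ as in (\ref{var:consts1}) and (\ref{v:at:zero}); the paper's computation is shorter and stays entirely in the transform variable.
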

\begin{proof}
The continuity is obtained by the Dominated Convergence Theorem and taking into consideration that the integrand of (\ref{inv:L:vtilde}) is continuous for $\tau>0$ since the denominator has two complex conjugate roots with negative real part (see Lemma~\ref{roots}).

While the asymptotic behaviour at $\eta \to 0$ is derived applying the Initial Value Theorem and computing the limits in the Laplace transform variable (see e.g. \cite[Chapter 2]{ABHN} for more information). For $\tau>0$, we have:
\[
\lim_{\eta\to 0} v(\eta)= \lim_{s\to +\infty} s \LL(v)(s)
= \lim_{s\to +\infty} s \frac{\tau s + s^{\alpha-1}}{\tau s^2 +s^{\alpha} +a} = 1\,,
\]
\[
\lim_{\eta\to 0} v'(\eta)
= \lim_{s\to +\infty} s \left( s \LL(v)(s) -v(0)\right)
=\lim_{s\to +\infty} \frac{-a\,s}{\tau s^2 +s^{\alpha} +a}  = 0\,,
\]
and
\begin{equation}\label{v:2prime:laplace}
\lim_{\eta\to 0} v''(\eta)
= \lim_{s\to +\infty} s \left( s^2 \LL(v)(s) -sv(0)-v'(0)\right)
=\lim_{s\to +\infty} \frac{-a\,s^2}{\tau s^2 +s^{\alpha} +a}  = -\frac{a}{\tau} \,.
\end{equation}
Therefore, we get the desired asymptotic behaviour by the Taylor series expansion formula at the origin.
\end{proof}

\subsection{Monotonicity of $v$ for small values of $\tau$}\label{v:monotone}
In this section we study the behaviour of $v$, $v'$ and $v''$. 
The main idea here is that one can absorb the non-monotone part of each function into the monotone part for $\tau>0$ sufficiently small. 
From \cite[Lemma 13 (iii)]{ACH}, we know that the three functions are uniformly bounded on $[0,\infty)$, the first one by a constant independent of $\tau$ and the other two by a constant dependent of $\tau$ which gets unbounded as $\tau \to 0^+$.

\begin{lem}\label{v:bhv:tau:small}
For $a>0$ and $\tau>0$, let $v(\eta)$ be the function defined by (\ref{v:expression})-(\ref{Ks}) for $\eta>0$. Then for $\tau>0$ sufficiently small, $0<v(\eta)<1$, $v'(\eta)<0$ for all $\eta > 0$. Moreover, there exists some $\eta_{inflex} \sim \tau^{1/(2-\alpha)}$ as $\tau \to 0^+$, such that 
\begin{equation}\label{v'':sign}
v''(\eta) < 0 \quad \mbox{for} \quad 0<\eta < \eta_{inflex}
\quad \mbox{and} \quad v''(\eta) > 0 \quad  \mbox{for}
\quad  \eta > \eta_{inflex}.
\end{equation}
Also, there exists some $\eta_0 \sim \tau^{\frac{1}{2-\alpha}}$ as $\tau \to 0^+$ such that
\begin{equation}\label{bh:pass:inflex}
v(\eta)\sim \frac{K(\tau)}{\eta^\alpha}, \quad v'(\eta) \sim -\frac{K'(\tau)}{\eta^{\alpha+1}}
\quad \mbox{for all} \quad \eta>\eta_0
\end{equation}
with $K(\tau)$, $K'(\tau)\sim\tau^{\frac{2\alpha}{2-\alpha}}$ as $\tau\to 0^+$, and
\begin{equation}\label{bh:pass:inflex:vpp}
  \lim_{\eta \to +\infty }v''(\eta)= 0.
\end{equation}

Finally, for $\tau\ll 1$ and $\eta\to 0^+$, valid in a layer of $\eta$ of order $\tau^{\frac{1}{2-\alpha}}$, we have
\begin{equation}\label{v:eta0:corr}
v(\eta) \sim 1 - \frac{a}{2\tau}\eta^2 + \frac{1}{(4-\alpha)(3-\alpha)(2-\alpha)} \frac{a}{\tau^2}\eta^{4-\alpha}\quad \mbox{as}\quad \eta\to 0^+\,,
\end{equation}
\begin{equation}\label{v:prime:eta0:corr}
v'(\eta)\sim- \frac{a}{\tau}\eta + \frac{1}{(3-\alpha)(2-\alpha)} \frac{a}{\tau^2}\eta^{3-\alpha}
\quad \mbox{as}\quad \eta\to 0^+
\end{equation}
and 
\begin{equation}\label{v:2prime:eta0:corr}
v''(\eta) \sim  - \frac{a}{\tau}+ \frac{1}{2-\alpha} \frac{a}{\tau^2}\eta^{2-\alpha}\quad \mbox{as}\quad \eta\to 0^+\,.
\end{equation}
\end{lem}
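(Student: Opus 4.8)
\textbf{Proof plan for Lemma~\ref{v:bhv:tau:small}.}
The plan is to extract all quantitative information from the explicit representation \eqref{v:expression}--\eqref{Ks} of $v$, treating the two summands (the Bromwich integral $I(\eta):=\frac{a\sin(\alpha\pi)}{\pi}\int_0^\infty e^{-\eta r}K(r)\,dr$ and the residue contribution $P(\eta):=2\,\mathrm{Re}(e^{s_1\eta}(\tau s_1+s_1^{\alpha-1})/(2\tau s_1+\alpha s_1^{\alpha-1}))$) separately, and crucially rescaling in the parameter $\tau$. First I would carry out the substitution $r=\tau^{-1/(2-\alpha)}\rho$ in $I(\eta)$ and $s_1=\tau^{-1/(2-\alpha)}\sigma_1$, where $\sigma_1$ is, by \eqref{complex:tau:small:right} (with $b=1$), a root of $\sigma^2+\sigma^\alpha+\text{(lower order)}=0$ that is bounded away from $0$ and $\infty$ and has strictly negative real part as $\tau\to0^+$. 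This rescaling shows that, after setting $\eta=\tau^{1/(2-\alpha)}X$, the function $v$ becomes, to leading order, a fixed profile $V(X)$ independent of $\tau$ (up to the overall amplitude of $P$, which scales like $\tau^{2\alpha/(2-\alpha)}$ because $(\tau s_1+s_1^{\alpha-1})/(2\tau s_1+\alpha s_1^{\alpha-1})$ is $O(1)$ while the factor $a\sin(\alpha\pi)/\pi$ in $I$ combines with the Jacobian to give that power). The inner-layer expansions \eqref{v:eta0:corr}--\eqref{v:2prime:eta0:corr} then follow directly from the Laplace-transform Initial Value Theorem applied to higher derivatives — one computes $\lim_{s\to\infty}s^{k+1}\mathcal L(v)(s)$ successively, obtaining the Taylor coefficients $v^{(k)}(0^+)$, and the $\eta^{4-\alpha}$, $\eta^{3-\alpha}$, $\eta^{2-\alpha}$ corrections come from the next term in the large-$s$ expansion of $1/(\tau s^2+s^\alpha+a)=\frac{1}{\tau s^2}-\frac{s^{\alpha}+a}{\tau^2 s^4}+\dots$, whose inverse Laplace transform produces exactly those fractional powers with the stated constants.

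Next I would establish the sign statements. For the far-field behaviour \eqref{bh:pass:inflex} and \eqref{v:infty}, the computation is already essentially done in the text: the refined Watson lemma gives $I(\eta)\sim \frac{\Gamma(\alpha)}{a^2}\cdot\frac{a\sin(\alpha\pi)}{\pi}\eta^{-\alpha}$, and under the $\tau$-rescaling the constant becomes $K(\tau)\sim\tau^{2\alpha/(2-\alpha)}$; differentiating the Puiseux/Watson expansion termwise (justified because $K\in L^1$ and its Puiseux expansion at $0$ controls the asymptotics) gives $v'(\eta)\sim -\alpha K(\tau)\eta^{-\alpha-1}$, hence \eqref{bh:pass:inflex} with $K'(\tau)=\alpha K(\tau)$, and similarly $v''(\eta)\sim \alpha(\alpha+1)K(\tau)\eta^{-\alpha-2}\to0$ gives \eqref{bh:pass:inflex:vpp}; since the residue term $P(\eta)$ decays exponentially in $\eta$ (as $\mathrm{Re}(s_1)<0$) it is negligible for $\eta>\eta_0\sim\tau^{1/(2-\alpha)}$. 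To get $0<v<1$ and $v'<0$ globally for small $\tau$: from \eqref{v:2prime:eta0:corr} one has $v''(\eta)<0$ on the inner layer (the $-a/\tau$ term dominates until $\eta^{2-\alpha}\sim\tau$, i.e.\ $\eta\sim\tau^{1/(2-\alpha)}$), while for $\eta>\eta_0$ one has $v''(\eta)>0$ by the far-field sign; one then argues that $v''$ has exactly one zero $\eta_{inflex}\sim\tau^{1/(2-\alpha)}$ by showing $v''$ is monotone across the transition region — here the rescaled profile $V''(X)$ is a fixed function whose unique sign change can be checked once and for all, and for $\tau$ small the perturbation from $P$ (exponentially small on this scale) and from the next-order terms in $\sigma_1$ cannot create extra zeros. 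Given $v''<0$ on $(0,\eta_{inflex})$ and $v'(0^+)=0$, $v'$ is negative just past $0$; since $v''>0$ afterwards $v'$ increases back toward $0$ but, by \eqref{bh:pass:inflex}, $v'(\eta)<0$ for all large $\eta$, and monotonicity of $v'$ on $(\eta_{inflex},\infty)$ (which follows from $v''>0$ there, shown via the far-field expansion plus the rescaled-profile argument) forces $v'<0$ throughout $(0,\infty)$; then $v$ decreases from $v(0^+)=1$ and stays positive because $v(\eta)\sim K(\tau)\eta^{-\alpha}>0$ at infinity, giving $0<v<1$.

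The main obstacle I anticipate is making the ``absorb the non-monotone part into the monotone part'' argument rigorous uniformly in $\tau$: one must show that the exact $v''$ has \emph{exactly one} sign change, not merely that it is negative near $0$ and positive near $\infty$. The clean way is the rescaling $X=\tau^{-1/(2-\alpha)}\eta$: under it $I$ and $P$ converge (with their first two $\eta$-derivatives, locally uniformly on $(0,\infty)$ and with matching uniform decay estimates at both ends) to the $\tau$-independent limit profile $V(X)=\frac{a\sin(\alpha\pi)}{\pi}\int_0^\infty e^{-X\rho}\rho^{\alpha-1}\tilde K_0(\rho)\,d\rho$ plus the (rescaled, shrinking-amplitude) residue term, where $\tilde K_0(\rho)=1/((\rho^2)^2+2\rho^{2+\alpha}\cos\alpha\pi+\rho^{2\alpha})$; for this explicit completely-monotone-type integral one checks directly that $V''$ changes sign exactly once, and then a uniform-convergence/perturbation argument transfers the conclusion to $v''$ for all sufficiently small $\tau$, pinning $\eta_{inflex}\sim\tau^{1/(2-\alpha)}$. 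The delicate points within this are (i) justifying termwise differentiation of the Watson/Puiseux asymptotics — handled by the $K\in L^1$ and $K=o(r^{\alpha-1+\varepsilon})$ hypotheses already recorded in the text, applied to $K$, $rK$, $r^2K$ — and (ii) controlling the residue term's contribution uniformly, which is easy since $\mathrm{Re}(s_1)\le -c\,\tau^{-1/(2-\alpha)}$ for some $c>0$ makes it (and its derivatives) exponentially small for $\eta\gtrsim\tau^{1/(2-\alpha)}$ while on the inner layer its size is controlled by \eqref{v:2prime:eta0:corr}.
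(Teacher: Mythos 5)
Your treatment of the inner expansions \eqref{v:eta0:corr}--\eqref{v:2prime:eta0:corr} (Initial Value Theorem plus the next terms of the large-$s$ expansion of $1/(\tau s^2+s^\alpha+a)$) and of the $\eta\to\infty$ behaviour via Watson's lemma is essentially the paper's route and is fine. The gap is in the centrepiece of your plan for the hard part --- the sign statements $0<v<1$, $v'<0$ and the \emph{unique} sign change of $v''$, uniformly for small $\tau$ --- which you base on convergence, after the rescaling $\eta=\tau^{1/(2-\alpha)}X$, $r=\tau^{-1/(2-\alpha)}\rho$, to a $\tau$-independent profile $V(X)=\frac{a\sin(\alpha\pi)}{\pi}\int_0^\infty e^{-X\rho}\rho^{\alpha-1}\tilde K_0(\rho)\,d\rho$. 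That object does not exist: $\rho^{\alpha-1}\tilde K_0(\rho)\sim\rho^{-1-\alpha}$ as $\rho\to0^+$, so the integral diverges. The divergence is not a technicality: after the substitution the integral term equals $\tau^{\alpha/(2-\alpha)}\frac{a\sin(\alpha\pi)}{\pi}\int_0^\infty e^{-X\rho}\rho^{\alpha-1}\tilde K_\tau(\rho)\,d\rho$, and the region $\rho\lesssim\tau^{1/(2-\alpha)}$ (the small-$r$ part of the spectral integral that your limit discards) contributes a factor $\sim\tau^{-\alpha/(2-\alpha)}$, i.e.\ exactly the $O(1)$ mass of $v$ on the inner layer. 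Indeed on that scale $v\to1$ (by \eqref{v:eta0:corr} the corrections are $O(\tau^{\alpha/(2-\alpha)})$), so there is no non-trivial $\tau$-independent limit profile for $v$; the structure you need sits in subleading orders, where moreover the residue term must be handled through the characteristic equation ($\tau s_1+s_1^{\alpha-1}=-a/s_1$), since its naive leading-order size $|s_1|^2\sim\tau^{-2/(2-\alpha)}$ for $v''$ exceeds the true $v''(0^+)=-a/\tau$. So ``check the sign change once on the limit profile and perturb'' cannot be carried out as stated, and invoking \eqref{v:2prime:eta0:corr} to control the residue on the layer is circular at this point of the argument.

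For comparison, the paper proves these statements by explicit two-sided bounds: \eqref{v:int:bd:up}, \eqref{v:int:bd:low} and \eqref{vprime:int:bd:low} for the spectral integrals, the residue term bounded through $C(\tau)=O(\tau^{\alpha/(2-\alpha)})$, $C'(\tau)=O(\tau^{-(1-\alpha)/(2-\alpha)})$ and $p=\mathrm{Re}(s_1)\sim-c\,\tau^{-1/(2-\alpha)}$, and the choice $\eta_0=K\eta_{max}$ with $K$ large but $\tau$-independent so that the algebraic lower bound (of order $\tau^{2\alpha/(2-\alpha)}$, with a $K^4$ gain) beats the residue bound. Note two further points where your plan is too quick: (i) at $\eta\approx\eta_0\sim\tau^{1/(2-\alpha)}$ the residue factor $e^{p\eta_0}=e^{-cK}$ is only a fixed constant, not negligible, and both contributions are of the same order in $\tau$ --- this tie is exactly what the large-$K$ choice breaks; (ii) the scaling $K(\tau)\sim\tau^{2\alpha/(2-\alpha)}$ in \eqref{bh:pass:inflex} cannot be obtained by ``rescaling the Watson constant'', which is $\tau$-independent ($\Gamma(\alpha)\sin(\alpha\pi)/(\pi a)$); it comes from evaluating the competing bounds at $\eta_0$. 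Finally, the uniqueness of the inflection point is obtained in the paper from the inner expansion \eqref{v:2prime:eta0:corr} (valid only in the $\tau^{1/(2-\alpha)}$ layer) combined with the equation itself, $\tau v''+av=-\DD_0^\alpha[v]>0$ once $v>0$ and $v'<0$ are established --- an ingredient your proposal lacks and which (or a genuine quantitative substitute for it) is needed to close the argument.
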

\begin{proof}
Considering the expression (\ref{v:expression}) of $v(\eta)$, one can get the following upper and lower bounds for the integral term:
\begin{equation}\label{v:int:bd:up}
\begin{split}
  &\int_0^{\infty} e^{-\eta r} \frac{r^{\alpha-1}}{(\tau r^2+a)^2+2(\tau r^2+a) r^\alpha \cos(\alpha\pi) + r^{2\alpha} }  \,dr \\ \leq &
  \int_{0}^{\infty}e^{-\eta r} \frac{r^{\alpha-1}}{(\tau r^2 +a)^2\sin^2(\alpha\pi)} \, dr 
\leq \frac{1}{a^2\sin^2(\alpha\pi)} \Gamma(\alpha) \frac{1}{\eta^\alpha}.
\end{split}
\end{equation}
In order to get the first inequality, we rewrite the denominator as 
\[
\begin{split}
&(\tau r^2+a)^2+2(\tau r^2+a) r^\alpha \cos(\alpha\pi) + r^{2\alpha} \\ =& \left((\tau r^2 + a)\cos(\alpha\pi) + r^\alpha\right)^2 + (\tau r^2 +a)^2\sin^2(\alpha\pi),
\end{split}
\]
while the last one is obtained computing the minimum of the denominator which is attained at zero and applying the change of variable $\eta r=r'$. On the other hand, taking into account that the integrand is non-negative and proceeding in the same way, one gets this lower bound, for any $0\leq A<B$
\begin{equation}\label{v:int:bd:low}
  \int_0^{\infty} e^{-\eta r} r^{\alpha-1}\tilde{K}(r) dr
  \geq \int_{A}^{B} e^{-\eta r} \frac{r^{\alpha-1}}{(\tau r^2 +a + r^\alpha)^2} \, dr
  \geq  \frac{e^{-\eta B} (B^\alpha-A^\alpha)}{\alpha(\tau B^2 +a + B^\alpha)^2 }\,.
\end{equation}

We rewrite the second term in (\ref{v:expression}) as follows:
\begin{equation}\label{Real:part:kernel}
\mbox{Re}\left(e^{s_1\eta}\frac{\tau s_1 + s_1^{\alpha-1}}{2\tau s_1 + \alpha s_1^{\alpha-1}}\right)
= e^{p\eta} \left(C_1 \cos(q\eta) + C_2 \sin(q\eta) \right), 
\end{equation}
thus $p=\mbox{Re}(s_1)<0$ and $q=\mbox{Im}(s_1)$,
and
\[
C_1 =\mbox{Re} \left(\frac{\tau s_1 + s_1^{\alpha-1}}{2\tau s_1 + \alpha s_1^{\alpha-1}}\right) ,\quad C_2 =-\mbox{Im} \left(\frac{\tau s_1 + s_1^{\alpha-1}}{2\tau s_1 + \alpha s_1^{\alpha-1}}\right).
\]

With this notation, we apply the upper bound (\ref{v:int:bd:up}) in
(\ref{v:expression}), to get (observe that $\sin\alpha\pi>0$): 
\[
v(\eta ) \leq C_r(\alpha) \frac{1}{\eta^\alpha}+ 2 e^{p\eta} C(\tau)\,,
\]
with constants
\[
C_r(\alpha) = \frac{\Gamma(\alpha)}{\pi a\sin(\alpha\pi)} , \ C(\tau)=|C_1|+|C_2|\,.
\]
Observe that the maximum of the function
$C_r(\alpha) + 2\eta^{\alpha} e^{p\eta} C(\tau)$ is attained at
$\eta_{max}=-\alpha/p>0$, thus
\begin{equation}\label{upp:v:1}
  v(\eta ) \leq \frac{1}{\eta^\alpha} \left(
  C_r(\alpha) + 2\left(-\frac{\alpha}{p}\right)^{\alpha}e^{-\alpha} C(\tau)
  \right)\,.
 \end{equation}

We observe that for $\tau\ll 1$ the constant $C(\tau)$ is of order
$\tau^{\frac{\alpha}{2-\alpha}}$. We deduce this fact by applying
(\ref{complex:tau:small:right}) of the
Appendix~\ref{appendix:roots}\footnote{
Taking into account that $\tau s_1^2+s_1^\alpha=-a$, then, as $\tau\to 0^+$,
\[
C_1=
\mbox{Re} \left(\frac{-a}{-2a-(2-\alpha)s_1^\alpha}
\right) = \mbox{Re} \left(\frac{a}{2a+(2-\alpha)s_1^\alpha}
\right) =
\frac{2a^2+a(2-\alpha)\mbox{Re}(s_1^\alpha)}{|2a+(2-\alpha)s_1^\alpha|^2}
= O(\tau^{\frac{\alpha}{2-\alpha}})\,.
\]
To leading order, the sign of $\mbox{Re}(s_1^\alpha)\sim \cos(\alpha \pi/(\alpha-2))\tau^{-\alpha/(2-\alpha)}$ as $\tau\to 0^+$ depends on $\alpha$: positive when $\alpha<2/3$, negative when $\alpha>2/3$, zero when $\alpha=2/3$.

We also have, as $\tau\to 0^+$,
\[
C_2 = - \mbox{Im} \left(\frac{-a}{-2a-(2-\alpha)s_1^\alpha}
\right) = -\mbox{Im} \left(\frac{a}{2a+(2-\alpha)s_1^\alpha}
\right) =  \frac{a(2-\alpha)\mbox{Im}(s_1^\alpha)}{|2a+(2-\alpha)s_1^\alpha|^2}
= O(\tau^{\frac{\alpha}{2-\alpha}})
\]
which is negative to leading order as $\tau\to 0^+$, since $\sin(\alpha\pi/(\alpha-2)) <0$
(see the expansion of $s_1$ with $b=1$ in (\ref{complex:tau:small:right}) and that $\mbox{Re}( \overline{s_1} s_1^{\alpha-1}) = \mbox{Re}( s_1\overline{s_1^{\alpha-1}})\sim \tau^{-\alpha/(2-\alpha)} (\cos(\pi/(\alpha-2))\cos((\alpha-1)\pi/(\alpha-2))+\sin(\pi/(\alpha-2))\sin((\alpha-1)\pi/(\alpha-2)))+\dots=\tau^{-\alpha/(2-\alpha)}\cos(\pi/(\alpha-2)-(\alpha-1)\pi/(\alpha-2))=-\tau^{-\alpha/(2-\alpha)}$).
}.

This is a good estimate for sufficiently large $\eta$. For small and large
values of $\eta$ we have a uniform, in $\tau\in [0,1]$, upper bound.
Indeed, for $\alpha\in(0,1/2)$, $\cos(\alpha\pi)>0$ and we obtain
\[
\begin{split}
  \int_0^{\infty}  e^{-\eta r} r^{\alpha-1} \tilde{K}(r)\,dr &\leq
  \frac{1}{\sin^2(\alpha \pi)} \int_0^1  \frac{r^{\alpha-1}}{(\tau r^\alpha +a)^2} \,dr +
  \int_1^{\infty} \frac{r^{\alpha-1}}{a^2 + r^{2\alpha}}\,dr \\ 
&\leq  \frac{1}{\alpha a^2 \sin^2(\alpha \pi)} + \frac{1}{\alpha}
\end{split}
\]
For $\alpha\in[1/2,1)$, $\cos(\alpha\pi)\leq 0$ and we obtain
\begin{align*}
  \int_0^{\infty}  e^{-\eta r} r^{\alpha-1} \tilde{K}(r)\,dr &\leq
  \frac{1}{\sin^2(\alpha \pi)} \int_0^R  \frac{r^{\alpha-1}}{(\tau r^\alpha +a)^2} \,dr +
  \int_R^{\infty} \frac{r^{\alpha-1}}{(\tau r^2 +a - r^{\alpha})^2}\,dr  \\
 \leq \frac{R^\alpha}{\alpha a^2 \sin^2(\alpha \pi)} + &\sup_{r\in(R,\infty)}\left\{\frac{r^{2\alpha}}{(\tau r^2 + a -r^\alpha)^2}\right\}  \frac{R^{-\alpha}}{\alpha}
\end{align*}
where $R$ is larger that the positive root of $\tau r^2 +a -  r^{\alpha}$.
Then, for all $\alpha\in(0,1)$ and $\eta>0$ with $\tau\ll 1$, 
 \begin{equation}\label{upp:v:2}
 v(\eta)\leq  C +  C(\tau) + O(\tau)\,.
 \end{equation}
 for an order one constant $C$, and $C(\tau)=O(\tau^{\alpha/(2-\alpha)})$.
 The same bounds, clearly hold replacing $v(\eta)$ by $|v(\eta)|$. Let us see that, indeed $v(\eta)>0$ for all $\eta$ if $\tau$ is sufficiently small. 

 We also observe that although $p<0$, in the limit $\tau\to 0^+$ we have the following behaviours:
\[
\eta \mbox{Re}(s_1)\to 0, \quad  \mbox{if}\quad \eta\ll \tau^{\frac{1}{2-\alpha}},  
\]
\[
\eta \mbox{Re}(s_1) \to -C,  \quad  \mbox{if}\quad\eta\sim \tau^{\frac{1}{2-\alpha}}
\]
and 
\[
\eta \mbox{Re}(s_1) \to -\infty \quad  \mbox{if}\quad\eta> \tau^{\frac{1}{2-\alpha}}.
\]
This follows from Lemma~\ref{roots} in Appendix~\ref{appendix:roots}.

In the third case we then have as a lower bound for $v$, using (\ref{v:int:bd:low}) with $A=0$ and $B=1/\eta$
\[
\frac{1}{\eta^\alpha}\left(\frac{a\sin(\alpha \pi)}{e\alpha \pi}
  \frac{ \eta^4}{(\tau  +a\eta^2 + \eta^{2-\alpha})^2 }
- 2 e^{p\eta}\eta^\alpha C(\tau)\right) \leq
v(\eta )\,. 
\]
Since the function $\frac{ \eta^4}{(\tau  +a\eta^2 + \eta^{2-\alpha})^2 }$ is
increasing and the minimum of the second term is attained at
$\eta_{max}=-\alpha/p=O(\tau^{1/(2- \alpha)})$ we have that there exists
$\eta_0 >\eta_{max}$ with $\lim_{\tau\to 0^+} \eta_0/\eta_{max}=\infty$, such that 
\[
0< \frac{1}{\eta^\alpha} \left(\frac{a\sin(\alpha \pi)}{e\alpha \pi}
\frac{ \eta_0^4}{(\tau  +a\eta_0^2 + \eta_0^{2-\alpha})^2 }
-  2 e^{p\eta_0}\eta_0^\alpha C(\tau) \right) 
\leq v(\eta ) \quad \mbox{for all} \quad \eta \geq \eta_0.
\]
We can improve this for $\eta_0=K\eta_{max}$ with $K>1$ sufficiently large,
with the estimate
\[
0< \frac{1}{\eta^\alpha} \left(\frac{a\sin(\alpha \pi)}{e\alpha \pi}
\frac{ \eta_0^4}{(\tau  +a\eta_0^2 + \eta_0^{2-\alpha})^2 }
-  2 e^{p\eta_{max}}\eta_{max}^\alpha C(\tau) \right) 
\leq v(\eta ) \quad \mbox{for all} \ \eta \geq \eta_0
\]
since both terms are of order $\tau^{2\alpha/(2-\alpha)}$ in that case,
but the first has the freedom for $K$ which can be made large\footnote{
\[
\frac{ \eta_0^4}{(\tau  +a\eta_0^2 + \eta_0^{2-\alpha})^2 }
\sim \tau^{2\alpha/(2-\alpha)}\frac{K^4}{(1+a\alpha^2C^2 K^2\tau^{\alpha/(2-\alpha)}
  + K^{2-\alpha} \alpha^{2-\alpha}C^{2-\alpha} )^2}, \quad \tau\to 0^+\,.
\]} but independent of $\tau$. This, (\ref{upp:v:1}) and (\ref{upp:v:2}) imply the first behaviour given in (\ref{bh:pass:inflex}).

More generally, from (\ref{v:int:bd:low}) with $A=0$, take $0<B< |p|$, then,
\[
 e^{-B\eta}\left( \frac{a\sin(\alpha \pi)}{\pi}
  \frac{B^\alpha}{\alpha(\tau B^2 +a + B^\alpha)^2 }
- 2 e^{\eta (-|p|+B)}C(\tau)\right) \leq
v(\eta ), 
\]
so, for all $\eta\in(0,\eta_0)$\[
 e^{-B\eta}\left( \frac{a\sin(\alpha \pi)}{\pi}
  \frac{B^\alpha}{\alpha(\tau B^2 +a + B^\alpha)^2 }
- 2 C(\tau)\right) \leq
v(\eta ), 
\]
then we can choose $B=O(1)$ as $\tau\to 0^+$ to guarantee that the
right-hand side is positive. This shows that $v(\eta)>0$ for all $\eta>0$ if $\tau$ is small enough. 

Let us now get the behaviour and bounds on $v'(\eta)$. Using (\ref{v:expression}) and (\ref{Real:part:kernel}) we derive
\[
\begin{split}
  v'(\eta) =&
  - \frac{a\sin(\alpha\pi)}{\pi}\int_0^\infty e^{-r\eta} r^\alpha \tilde{K}(r)dr \\
  &+ 2 e^{p \eta}\left( (p\cos(q\eta)-q\sin(q\eta))C_1
  + (p\sin(q\eta)+q\cos(q\eta)C_2 \right) \\
  =&-\frac{a\sin(\alpha\pi)}{\pi}\int_0^\infty e^{-r\eta} r^\alpha \tilde{K}(r)dr
  + \mbox{Re}\left(e^{iq\eta} s_1
  \frac{\tau s_1 + s_1^{\alpha-1}}{2\tau s_1 +\alpha s_1^{\alpha-1}}\right)e^{p\eta}\\
  =&-\frac{a\sin(\alpha\pi)}{\pi}\int_0^\infty e^{-r\eta} r^\alpha \tilde{K}(r)dr
  -\mbox{Re}\left(e^{iq\eta} \frac{a}{2\tau s_1 + \alpha s_1^{\alpha-1}}\right)e^{p\eta}\,.
\end{split}
\]
Then, we have the lower bounds for $v'$ (that are obtained in a similar way as the upper bounds on $v$ (\ref{upp:v:1}) and (\ref{upp:v:2})):
\begin{equation}\label{vprime:lower:1}
  v'(\eta) \geq
  - \frac{1}{\eta^{\alpha+1}} \left(\frac{\Gamma(\alpha+1)}{a\pi\sin(\alpha\pi)}
 + \left(\frac{\alpha+1}{|p|}\right)^{\alpha+1} e^{-(\alpha+1)}C'(\tau)\right)
\end{equation}
and
\begin{equation}\label{vprime:lower:2}
  v'(\eta)\geq
  - \frac{\tau^{-1}}{2(1-\alpha)\sin(\alpha \pi)}
  - \frac{1}{(\alpha+1)\pi a\sin(\alpha \pi)}   - 2 C'(\tau) e^{p \eta} \,,
\end{equation}
where $C'(\tau) =2 (|C_1| +|C_2|)(|p|+|q|)= O(\tau^{-\frac{1-\alpha}{2-\alpha}})$
as $\tau\to 0^+$ (we can even improve the first term\footnote{
Here we use for $\tau\ll 1$
  \begin{align*}
  \int_0^{\infty}  e^{-\eta r} r^{\alpha} \tilde{K}(r)\,dr &\leq
  \frac{1}{a^2\sin^2(\alpha \pi)} \int_0^1  r^{\alpha} \,dr +
  \int_1^{\infty} \frac{r^\alpha}{(\tau r^2+a)^2\sin^2(\alpha \pi)}\,dr \\
  \leq \frac{1}{(\alpha+1) a^2 \sin^2(\alpha \pi)}
  &+ \frac{1}{2 \tau a\sin^2(\alpha \pi)}\int_1^{\infty} r^{\alpha-2}\,dr\,.
  \end{align*}
We can also use:
 \begin{align*}
  \int_0^{\infty}  e^{-\eta r} r^{\alpha} \tilde{K}(r)\,dr &\leq
  \frac{1}{2a\tau\sin^2(\alpha \pi)} \int_0^\infty  e^{-\eta r} r^{\alpha-2} \,dr
  \leq C\frac{\eta^{1-\alpha}}{\tau}
 \end{align*}
or
 \begin{align*}
  \int_0^{\infty}  e^{-\eta r} r^{\alpha} \tilde{K}(r)\,dr &\leq
  \frac{1}{a^2\tau^2\sin^2(\alpha \pi)} \int_0^\infty  e^{-\eta r} r^{\alpha-4} \,dr
  \leq C\frac{\eta^{3-\alpha}}{\tau^2}\,.
 \end{align*}
}).


Then, we use that for any $0\leq A<B$
\begin{equation}\label{vprime:int:bd:low}
  \int_0^{\infty} e^{-\eta r} r^{\alpha}\tilde{K}(r) dr
  \geq \int_{A}^{B} e^{-\eta r} \frac{r^{\alpha}}{(\tau r^2 +a + r^\alpha)^2} \, dr
  \geq  \frac{e^{-\eta B} (B^{\alpha+1}-A^{\alpha+1})}{(\alpha+1)(\tau B^2 +a + B^\alpha)^2 }.
\end{equation}
Taking $A=0$ and $B=1/\eta$, we get that there exists
$\eta_0'>\eta_{max}'=-(\alpha+1)/p$ such that, for all $\eta\geq\eta_0'$,  
\[
v'(\eta)\leq - \frac{1}{\eta^{\alpha+1}}\left(
\frac{a\sin(\alpha \pi)}{e(\alpha+1) \pi}
\frac{ (\eta_0')^4}{(\tau  +a(\eta_0')^2 + (\eta_0')^{2-\alpha})^2 }
- 2 e^{p\eta_0'}(\eta_0')^{\alpha+1}C'(\tau)
\right)<0\,.
\]
Which can again be improved, as before, for $\tau$ sufficiently small,
for $\eta_0'=K'\eta_{max}'$ for some $K'>1$ sufficiently large, but independent of $\tau$, so that for all
$\eta\geq \eta_0'$, 
\[
v'(\eta)\leq - \frac{1}{\eta^{\alpha+1}}\left(
\frac{a\sin(\alpha \pi)}{e(\alpha+1) \pi}
\frac{ (\eta_0')^4}{(\tau  +a(\eta_0')^2 + (\eta_0')^{2-\alpha})^2 }
- 2 e^{p\eta_{max}'}(\eta_{max}')^{\alpha+1}C'(\tau)
\right)<0. 
\]
In this case the term in brackets is also of the order $\tau^{\frac{2\alpha}{2-\alpha}}$ as $\tau \to 0^+$. This implies the second estimate in (\ref{bh:pass:inflex}). 

The behaviour of $v''(\eta)$ for $\eta>\eta_0$ can be deduced similarly, but it is enough for our purposes to just get (\ref{bh:pass:inflex:vpp}). This can be done by using similar estimates as in (\ref{v:int:bd:up}) and (\ref{Real:part:kernel}).

Let us now show that $v'(\eta)<0$ for $\eta\leq \eta_0$. For very small values of $\eta$, we can take $B=K|p|$ and choose $K>1$
large enough, but independent of $\tau$, so that for $\tau$ sufficiently small, we get that
\[
\left( \frac{a\sin(\alpha \pi)}{e(\alpha+1)\pi}
  \frac{ B^{\alpha+1}}{(\tau B^2 +a + B^\alpha)^2 }
- 2 C'(\tau)\right) >0 
\]
observe that both terms are of the same order as $\tau\to 0^+$, but making $K$
large makes the first larger. Thus, there exists $K>1$ large enough so that
for all $0<\eta\leq (K|p|)^{-1}$ and for $\tau$ small enough, we have
$v'(\eta)\leq 0$, which together with the previous estimate implies that $v$ decreases for all $\eta>0$.

A next order correction of the behaviours given in Lemma~\ref{v:cont:behav} and of that implied by (\ref{v:2prime:laplace}), can be obtained from a similar computation, as in the proof of this Lemma, of the third derivative. Putting all together we get the expansion (\ref{v:2prime:eta0:corr}) for $v''$. Thus there is a value $\eta_{inflex}\ll 1$ if $\tau$ is sufficiently small such that $v''(\eta_{inflex})=0$ and has
\begin{equation}\label{eta:inflex:bh}
\eta_{inflex} \sim (2-\alpha)^{\frac{1}{2-\alpha}} \tau^{\frac{1}{2-\alpha}} \quad\mbox{as}\quad \tau\to 0^+.
\end{equation}
Then, from this and (\ref{v:2prime:eta0:corr}) we deduce (\ref{v:prime:eta0:corr}), and then (\ref{v:eta0:corr}). These limits are valid as long as $\eta\leq \eta_{inflex}$ for $\tau$ small enough but positive.

From the linear equation satisfied by $v$, which is $\tau v'' + \DD_0^\alpha[v] + a v=0$ with $v(\eta)>0$ and $v'(\eta)<0$ for all $\eta>0$ if $\tau$ is sufficiently small, we deduce that $\tau v'' + a v>0$ for all $\eta>0$ if $\tau$ is sufficiently small. If initially $v''<0$ and on the other hand $v>0$ decreases for all $\eta$, then $v''$ must change sign. The regularity of $v$ and the fact that $\eta_{max}$ and $\eta_{max}'$ are of order $\tau^{\frac{1}{2-\alpha}}$ as $\tau\to 0^+$, imply that for $\tau$ small enough this change of sign of $v''$ occurs only once, and that might be given by (\ref{eta:inflex:bh}).

We observe that there is a boundary layer of size $O(\tau^{1/(2-\alpha)})$ as $\tau\to 0^+$. In particular the behaviours obtained above are consistent for $\eta\sim \eta_{inflex}$ with the behaviour of the corresponding solution $v_0$ of the linear problem when $\tau=0$ as $\eta\to 0$, since (see e.g. \cite{GM2})
\[
\lim_{\eta\to 0} v_0(\eta)= 1
\quad
\mbox{and}\quad
v_0'(\eta) \sim - a\eta^{\alpha-1} \quad \mbox{as}\quad \eta \to 0^+.
\]

\end{proof}


\end{document}